\documentclass[12 pt]{amsart}
\usepackage{amssymb, mathtools, amsfonts, amsthm, graphics,mathrsfs}
\usepackage[usenames, dvipsnames]{xcolor}
\usepackage[hmargin=1 in, vmargin = 1 in]{geometry}
\usepackage{tikz-cd}
\usetikzlibrary{matrix, calc, arrows} 
\usepackage{hyperref}
\usepackage[all]{xy}
\usepackage{ytableau}
\usepackage{soul,dirtytalk}
\usepackage[capitalise]{cleveref}
\usepackage{graphicx,pgf,tikz}

\usepackage{twemojis}
\usepackage{mathdots}

\definecolor{darkblue}{rgb}{0.0,0,0.7}

\newcommand{\newword}[1]{\textcolor{darkblue}{\textbf{\emph{#1}}}}

\newtheorem{Theorem}{Theorem}[section]

\newtheorem{corollary}[Theorem]{Corollary}

\newtheorem{remark}[Theorem]{Remark}

\newtheorem{definition}[Theorem]{Definition}
\newtheorem{proposition}[Theorem]{Proposition}

\newtheorem{theorem*}{Theorem}

\newtheorem{lemma}[Theorem]{Lemma}

\numberwithin{equation}{section}

\newcommand{\rk}{\mathrm{rk}}

\newcommand{\ess}{\mathrm{ess}}
\newcommand{\Mat}{\mathrm{Mat}}
\newcommand{\perm}{\mathrm{Perm}}

\newcommand{\inv}{\mathrm{inv}}

\newcommand{\bpd}{\mathrm{BPD}}

\newcommand{\field}{\kappa}

\newcommand{\lc}{{\rm lc}}

\newcommand{\dc}{{\rm dc}}

\newcommand{\asmtobpd}{\Phi}

\theoremstyle{remark}
\newtheorem{examplex}[Theorem]{Example}
\newenvironment{example}
  {\pushQED{\qed}\examplex}
  {\popQED\endexamplex}

\usepackage[colorinlistoftodos]{todonotes}

\newcommand{\asm}{{\sf ASM}}
\newcommand{\monotone}{{\sf MT}}

\title[Weak order: ASMs, MTs, and BPDs]{Weak order: Alternating sign matrices, monotone triangles, and bumpless pipe dreams}

\author{Laura Escobar}
\address[LE]{Mathematics Department, University of California Santa Cruz, Santa Cruz CA 95064}
\email{lauraescobar@ucsc.edu}

\author{Patricia Klein}
\address[PK]{Department of Mathematics, Texas A\&M University, College Station TX 77840}
\email{pjklein@tamu.edu}

\author{Anna Weigandt}
\address[AW]{School of Mathematics, University of Minnesota, Minneapolis MN 55455}
\email{weigandt@umn.edu}

\thanks{LE was partially funded by NSF CAREER grant DMS-2142656, DMS-2521270. PK was partially funded by NSF grant DMS-2246962 and by the Travel Support for Mathematicians gift MP-TSM-00002939 from the Simons Foundation. AW was partially funded by NSF grant DMS-2344764.
PK worked on this project while she was a member at the Institute for Advanced Study with support from the Bob Moses Fund. She thanks the IAS for its hospitality and support. She also worked on this project while she was a long-term visitor at the Fields Institute, which she also thanks for its hospitality and support.}

\date{\today}
\keywords{Alternating sign matrices, weak order, monotone triangles, bumpless pipe dreams}

\makeatletter
\@namedef{subjclassname@2020}{%
  \textup{2020} Mathematics Subject Classification}
\makeatother

\subjclass[2020]{}

\begin{document}

\begin{abstract}
In 2018, Hamaker and Reiner introduced weak order for monotone triangles, which extended the usual notion of weak order on the symmetric group. Monotone triangles on $\{1, \ldots, n\}$ are well-known to be in bijection with the set $\asm(n)$ of $n \times n$ alternating sign matrices. Hamaker and Reiner defined weak order on $\asm(n)$ to be induced from weak order on monotone triangles via the standard bijection. Recently, the present authors used an \emph{a priori} different definition of weak order on $\asm(n)$ to give a combinatorial characterization of the codimension of ASM varieties and to show that the natural K-theoretic representatives of these varieties satisfy a divided difference recurrence. 

In the present work, we establish compatibility of these definitions of weak order on $\asm(n)$. Additionally, we give three different explicit means of computing weak order covering relations on $\asm(n)$: on ASMs themselves, on monotone triangles in a manner different from that given by Hamaker and Reiner, and on bumpless pipe dreams, which are a newer family of combinatorial objects also in correspondence with ASMs. Finally, using the language of bumpless pipe dreams, we characterize the fibers of the weak order operators, each of which forms a sublattice of the strong Bruhat order on $\asm(n)$.
\end{abstract}

\maketitle
\tableofcontents

\section{Introduction}\label{sec:intro}

Alternating sign matrices (ASMs) are objects of longstanding interest in enumerative combinatorics and, via the six-vertex ice model, in statistical mechanics.  Weigandt \cite{Wei17} showed that ASMs index arbitrary intersections of matrix Schubert varieties, which are affine varieties introduced by Fulton \cite{Ful92} in the study of Schubert varieties in the complete flag variety.  The multidegrees (resp., twisted K-polynomials) of matrix Schubert varieties are Schubert polynomials \cite{KM05} (resp., Grothendieck polynomials), which are combinatorially natural representatives of Schubert classes in cohomology (resp., K-theory).  In studying these multidegrees and twisted K-polynomials, one is naturally led to study the intersections of matrix Schubert varieties, which are now called ASM varieties.

One robust method for understanding matrix Schubert varieties is induction on strong (Bruhat) order.  One encounters significantly more obstructions when trying to apply similar arguments to ASM varieties.  In order to study the codimensions of ASM varieties as well as their multidegrees and twisted K-polynomials (which we call ASM Schubert polynomials and ASM Grothendieck polynomials, respectively), the authors appealed instead to weak (Bruhat) order \cite{EKW-main}, introduced for ASMs by Hamaker and Reiner \cite{HR20} in the language of monotone triangles.  

One goal of the present paper is to reconnect these two perspectives. Specifically, Hamaker and Reiner take weak order on ASMs to be defined by weak order on monotone triangles, whereas \cite{EKW-main} gives a direct definition on ASMs. \cref{weak_order_isomorphism} establishes compatibility of these two definitions. Collecting correspondences between various features of ASMs and monotone triangles, in \cref{equivalent-descent-conditions} we give a family of conditions that are equivalent to the action of a weak order operator on an ASM being nontrivial. As a consequence, we characterize the ASMs that are maximal in weak order in terms of their essential cells (\cref{prop:maximal-iff-essential-cell-in-every-row}), a companion to Hamaker and Reiner's corresponding characterization for monotone triangles \cite[Lemma 6.2]{HR20}.

Our second focus is computational. We give three explicit means for computing the ASMs covered by an ASM $A$ in weak order. The first is directly from the entries of $A$ (\cref{prop:toggle-one-block-in-weak-order}). The second is via a weak order that respects columns rather than rows (\cref{prop:monotone_transpose_weak_order}), which passes through the correspondence with monotone triangles. The third (\cref{prop:weakorderbpd}) is via combinatorial objects called bumpless pipe dreams, introduced by Lam, Lee, and Shimozono \cite{LLS21} and shown to be in bijection with ASMs by Weigandt \cite{Wei21}. Using this final description, we describe moves to produce the full lattice of ASMs that share an image under a fixed weak order operator (\cref{prop:moves-to-get-preimage}).

\section{Background}

\subsection{ASMs and ASM varieties} 

We begin by reviewing definitions and some basic constructions concerning alternating sign matrices.  We give a brief introduction here. For further information and examples, see \cite[Section 2]{EKW-main}. See \Cref{background-ex} at the end of this section for an example illustrating much of the terminology we give here.

An \newword{alternating sign matrix (ASM)} $A = (A_{i,j})$ is an $n \times n$ matrix with entries in $\{-1,0,1\}$ such that
\begin{enumerate}
    \item for all $i,m \in [n]$, $\sum_{j=1}^m A_{i,j} \in \{0,1\}$,
    \item for all $j, m \in [n]$, $\sum_{i=1}^m A_{i,j} \in \{0,1\}$, and 
    \item $\sum_{(i,j) \in [n] \times [n]} A_{i,j} = n$.
\end{enumerate}  We write $\asm(n)$ for the set of $n\times n$ alternating sign matrices.  Imposing conditions (1) and (2) is equivalent to insisting that the nonzero entries in each row and column alternate in sign with the first nonzero entry being a $1$. Adding condition (3) then requires the sum along each row and column to be $1$.

The set $S_n$ of $n\times n$ permutation matrices is contained in $\asm(n)$.
We identify permutation matrices with permutations of $[n]$ as follows. 
Suppose that the rows of the permutation matrix $w$ are the standard basis vectors $e_{i_1},\ldots,e_{i_n}$. The permutation associated to $w$ maps $j\mapsto i_j$. We use $w\in S_n$ to denote both the permutation matrix and the corresponding permutation of $[n]$.

For $A \in\asm(n)$, define its \newword{corner sum function} $\rk_A$ by $\rk_A(i,j)=\sum_{a\in[i],b\in[j]} A_{a,b}$ for all $i,j\in[n]$. 
We will use the corner sum function for two purposes: (1) to associate to $A$ a subvariety of affine $n^2$-space and (2) to define strong and weak Bruhat orders.  We will often conflate the corner sum function $\rk_A$ with the matrix $(\rk_A(i,j))_{1\leq i,j\leq n}$, writing $\rk_A$ for both.

Fix a field $\field$. Let $\Mat(n)$ denote the set of $n\times n$ matrices over $\field$.  Given $M\in \Mat(n)$, we write $M_{[i],[j]}$ for the submatrix of $M$ consisting of its first $i$ rows and $j$ columns.
The \newword{ASM variety} of $A\in\asm(n)$ is 
\[
X_A=\{M\in \Mat(n) : \rk(M_{[i],[j]})\le \rk_A(i,j) \text{ for all } i,j\in[n]\}.
\]
Let $S=\field [z_{i,j} : i,j\in[n]]$, and let $Z=(z_{i,j})_{i,j\in[n]}$ be a generic $n \times n$ matrix.
Let $I_k(Z_{[i],[j]})\subseteq S$ be the ideal generated by the $k$-minors of $Z_{[i],[j]}$.  Define
\[
I_A = \sum_{i,j\in[n]}I_{\rk_A(i,j)+1}(Z_{[i],[j]}), 
\] which we call the \newword{ASM ideal} of $A$.  It is easy to see that $X_A = \mathbb{V}(I_A)$. By \cite[Section 7.2]{Knu09} and \cite[Lemma 5.9]{Wei17}, or \cite[Lemma 2.6]{KW23}, $I_A$ is  radical.  For the classical case, when $A \in S_n$ and $X_A$ is called a matrix Schubert variety, we refer the reader to \cite{Ful92, KM05}.  

We define \newword{strong (Bruhat) order} on $\asm(n)$ by the relation $A \leq B$ if and only if $\rk_A(i,j)\geq \rk_B(i,j)$ for all $i,j\in [n]$ (or, equivalently, if $X_A\supseteq X_B$, or, equivalently, if $I_A\subseteq I_B$).  Strong order can be used to characterize the components of an ASM variety. For $A \in \asm(n)$, let \[
\perm(A)= \{w\in S_n: w\ge A \text{ and, if } w\ge v\ge A \text{ for some } v\in S_n,\text{ then }w=v\},
\] which we call the \newword{permutation set of $A$}.

\begin{proposition}\cite[Proposition 5.4]{Wei17}\label{prop:I_A-intersection-of-schubs-in-perm}
Let $A \in \asm(n)$.  Then $I_A = \bigcap_{w \in \perm(A)} I_w$ is the minimal prime decomposition of the radical ideal $I_A$.
\end{proposition}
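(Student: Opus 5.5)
The plan is to prove the set-theoretic equality first, upgrade it to an ideal equality using radicality, and then read off minimality from the structure of $\perm(A)$. I will take as known the standard facts about matrix Schubert varieties: for $w \in S_n$, $I_w$ is prime (Fulton, Knutson--Miller), and $X_w = \mathbb{V}(I_w)$ is irreducible. For permutations $v,w$, $X_w \subseteq X_v$ holds exactly when $v \le w$ in strong order, which is built into the definition via rank functions together with primality.

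\emph{Step 1: the set-theoretic decomposition.} I claim $X_A = \bigcup_{w \ge A,\, w \in S_n} X_w$. The inclusion $\supseteq$ is immediate: if $w \ge A$ then $\rk_w \le \rk_A$ entrywise, so $X_w \subseteq X_A$.

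For $\subseteq$, take any $M \in X_A$. Define $r_M(i,j) = \rk(M_{[i],[j]})$; this is the rank function of a partial permutation. Pad $M$ to an $n\times n$ matrix of full rank to obtain a permutation $w$ with $M \in X_w$, chosen so that $\rk_w(i,j)$ is as small as possible. Concretely, start from the partial permutation matrix $P$ with rank function $r_M$ (so that $M$ lies in the $B_-\times B_+$ orbit closure of $P$). Then complete $P$ to a permutation $w$ by placing the missing $1$'s in the empty rows and columns in an antidiagonal (southeast-justified, decreasing) fashion. This completion minimizes all corner ranks among completions, giving $\rk_w(i,j) = \max(r_M(i,j),\, i+j-n)$-type formulas.

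It then remains to check that $\rk_w \le \rk_A$. This uses two facts: $\rk_A(i,j) \ge r_M(i,j)$, and $\rk_A(i,j) \ge i+j-n$, the latter holding because the row and column sums of an ASM are $1$. I expect this to be the main obstacle, since one must verify carefully that the greedy completion's rank function equals exactly $\max(r_M(i,j), i+j-n)$ pointwise. If that formula is awkward to establish directly, the fallback is to complete one rank at a time, showing at each step that a suitable entry can be added without exceeding $\rk_A$.

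\emph{Step 2: ideal equality.} Every $w \ge A$ lies above some minimal element, and such a minimal element is in $\perm(A)$. Hence $X_A = \bigcup_{w\in\perm(A)} X_w$. Taking vanishing ideals, $I(X_A) = \bigcap_{w\in\perm(A)} I(X_w)$. Since $I_A$ is radical (by the results cited above) and each $I_w$ is prime, hence radical, the Nullstellensatz gives $I_A = \bigcap_{w\in\perm(A)} I_w$. I will pass to an algebraic closure if needed; this is harmless because the ideals are defined over $\mathbb{Z}$ by minors.

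\emph{Step 3: minimality.} Each $I_w$ is prime. The decomposition is irredundant because, for distinct $v,w \in \perm(A)$, $I_v \subseteq I_w$ would mean $v \le w$, which contradicts the minimality of $w$. Therefore this is the minimal prime decomposition.
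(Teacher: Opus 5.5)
Your Steps 2 and 3 are fine once Step 1 is available. The problem is Step 1: the substantive inclusion $X_A\subseteq\bigcup_{w\ge A}X_w$ is not established, and the construction you propose for it fails.

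The completed permutation depends only on $M$ and not on $A$, and its rank function is not $\max(r_M(i,j),i+j-n)$. Take $n=3$ and $A=w_0$, so $X_A=\{M : M_{11}=M_{12}=M_{21}=0\}$, and let $M=E_{33}\in X_A$. The empty rows and columns of $P=E_{33}$ are $1,2$. Your antidiagonal completion $w$ therefore has $1$'s at $(1,2),(2,1),(3,3)$, which gives $\rk_w(1,2)=1>0=\rk_{w_0}(1,2)$. Hence $w\not\ge A$, even though both inequalities you planned to use, $r_M\le\rk_A$ and $\rk_A(i,j)\ge i+j-n$, hold here.

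In fact no completion of $P$ can work in this example. The only permutation $w\ge w_0$ is $w_0$ itself, and it has no $1$ at $(3,3)$. The one-entry-at-a-time fallback fails for the same reason: $E_{22}+E_{33}$ also lies in $X_{w_0}$, but its only completion is the identity. The permutation you need only has to satisfy $r_M\le\rk_w\le\rk_A$, and in general it does not contain the $1$'s of $P$ at all.

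What is missing is an argument that is allowed to relocate the entries of $P$, not merely add to them. One option is a lemma of the following form. If $Q$ is a partial permutation of rank less than $n$ with $r_Q\le\rk_A$, then some move raising $r_Q$ by $1$ on a rectangle keeps the rank function bounded by $\rk_A$. The moves are:
\begin{enumerate}
\item adding a $1$ in an empty row and empty column;
\item sliding a $1$ north or west into an empty row or column;
\item replacing $1$'s at $(i,l),(k,j)$ with $i<k$, $j<l$ by $1$'s at $(i,j),(k,l)$.
\end{enumerate}
In the example, one slides the $1$ at $(3,3)$ up to $(1,3)$ and then adds $(2,2)$ and $(3,1)$ to reach $w_0$.

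Proving such a lemma requires using the structure of $\rk_A$ as an ASM rank function, not just the bound $\rk_A(i,j)\ge i+j-n$. The alternative is a geometric input showing that intersections of matrix Schubert varieties are unions of matrix Schubert varieties, applied to $X_A=\bigcap_{v\in S_n,\,v\le A}X_v$. The set-theoretic decomposition $X_A=\bigcup_{w\in\perm(A)}X_w$ is the whole content of the proposition; the paper does not reprove it but cites \cite{Wei17}. As written, your proof has a gap exactly at its core.
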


Fulton \cite[Theorem 3.3]{Ful92} showed that $I_A$ is prime whenever $A \in S_n$.  Combining \cite[Proposition 5.4]{Wei17} and \cite[Theorem 3.3]{Ful92}, we see that $I_A$ is prime if and only if $A \in S_n$.

\subsection{Weak (Bruhat) order}
In order to define weak order on $\asm(n)$, we will need the operators $\pi_i$.  

\begin{definition}
\label{def:pi_i}
    Given $A\in \asm(n)$ and $i\in[n-1]$, let 
    \[
    \pi_i(A)=\min\{B\in \asm(n):\rk_A(a,b)=\rk_B(a,b) \text{ for all } a,b\in [n] \text{ with } a\neq i\},
\] where the minimum is taken in the strong order poset.
\end{definition}

Though it is not clear a priori that $\pi_i(A)$ is well-defined, i.e., that a unique minimum element of the given set exists, this follows from \cite[Lemma 2.9]{EKW-main}.  This definition of $\pi_i(A)$ agrees with that of \cite{HR20}, which is given in terms of monotone triangles.

Define \newword{weak (Bruhat) order} on $\asm(n)$ to be the transitive closure of the following covering relations: $A$ covers $B$ if $B = \pi_i(A)$ for some $i \in [n-1]$ and $B \neq A$.  Reserving $C <A$ to denote that $C$ is less than $A$ in strong order, we use $C \prec A$ to denote that $C$ is less than $A$ in weak order.  Note that $\pi_i(A) \leq A$, which implies that the transitive closure described above is well-defined.

We say that $i \in [n-1]$ is a \newword{descent} of $A$ (or that $A$ has a descent at $i$) if $\pi_i(A)<A$.  We say that $i \in [n]$ is an \newword{ascent} of $A$ (or that $A$ has an ascent at $i$) if $i=n$ or if $\pi_i(A) = A$.

The descent set of $A$ can be characterized directly from the rank function, via \newword{essential cells}, the set of which we call the \newword{essential set}, denoted $\ess(A)$.  We define $\ess(A)$ to be the set of cells $(i,j)$ such that
\[
\rk_A(i,j)=\rk_A(i,j-1)=\rk_A(i-1,j)=\rk_A(i,j+1)-1=\rk_A(i+1,j)-1.
\]

\begin{lemma}{\cite[Lemma 3.2]{EKW-main}}\label{lemma:descentessential}
    Let $A\in\asm(n)$.  Then $i$ is a descent of $A$ if and only if there is an essential cell in row $i$ of $A$.
\end{lemma}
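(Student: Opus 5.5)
The plan is to pass to rank matrices. ASMs are determined by their corner sum functions, and the admissible rank matrices are exactly the integer matrices $r$ with $r(0,\cdot)=r(\cdot,0)=0$, $r(n,j)=j$, $r(i,n)=i$, and with consecutive differences in every row and column lying in $\{0,1\}$. The condition defining $\pi_i(A)$ fixes every row of $\rk_A$ except row $i$. So $\pi_i(A)$ is obtained by replacing row $i$ by the entrywise largest admissible row $r$ compatible with rows $i-1$ and $i+1$. By the well-definedness result \cite[Lemma 2.9]{EKW-main}, this minimum in strong order exists and is unique, so it is the entrywise maximum. Writing $u=\rk_A(i-1,\cdot)$ and $w=\rk_A(i+1,\cdot)$, the constraints on $r$ are
\[
u(j)\le r(j)\le u(j)+1,\qquad w(j)-1\le r(j)\le w(j),\qquad r(j)-r(j-1)\in\{0,1\}.
\]
Thus $i$ is a descent exactly when the current row $\rk_A(i,\cdot)$ is not this maximal row, that is, when some entry of row $i$ can be increased.

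For the forward direction, suppose $(i,j)\in\ess(A)$. Then $\rk_A(i,j)=\rk_A(i-1,j)=\rk_A(i,j-1)=\rk_A(i+1,j)-1=\rk_A(i,j+1)-1$. I will show that raising $\rk_A(i,j)$ by one yields another admissible rank matrix $B$ that agrees with $\rk_A$ off row $i$. The vertical differences become $1$ and $0$, and the horizontal differences become $1$ and $0$, so all constraints still hold. Since $\rk_B\ge\rk_A$ entrywise with strict inequality at $(i,j)$, we get $B<A$ in strong order. Hence $\pi_i(A)\le B<A$, and $i$ is a descent.

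For the converse, suppose $\pi_i(A)<A$. Then the maximal admissible row $r$ strictly exceeds $s=\rk_A(i,\cdot)$ somewhere. Both rows lie in $\{u,u+1\}\cap\{w-1,w\}$ entrywise, so $r(j)=s(j)+1$ forces $s(j)=u(j)=w(j)-1$. Choose a maximal interval $[a,b]$ of columns on which $r>s$. At $j=a$, since $r(a-1)=s(a-1)$ (with the convention $r(0)=s(0)=0$), the increments give $s(a)=s(a-1)$. At $j=b$, since $r(b+1)=s(b+1)$, we get $s(b+1)=s(b)+1$. Note that $b<n$ because $r(n)=s(n)=i$. The remaining task is to find a single column $j\in[a,b]$ with
\[
s(j-1)=s(j),\qquad s(j+1)=s(j)+1.
\]
Since $s$ is nondecreasing with steps in $\{0,1\}$, starting with a flat step into $a$ and ending with a rising step out of $b$, there is a first $j\in[a,b]$ with $s(j+1)-s(j)=1$ and $s(j)-s(j-1)=0$. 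To see this, take the last $0$-step before the first rising step after $a$. At that $j$ we also have $s(j)=u(j)=w(j)-1$, so $(i,j)\in\ess(A)$.

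The main obstacle is the initial reduction: identifying $\pi_i(A)$ with the entrywise-maximal admissible row, and confirming that admissible rank matrices are exactly those given by the difference conditions above. I would cite the standard characterization of ASM corner sum matrices, which is implicit in the definition via partial row and column sums, together with \cite[Lemma 2.9]{EKW-main}. In fact the converse only needs the existence of some $B$ with $B<A$ differing only in row $i$, so even that citation is light. The interval argument just needs care at the boundary columns $0$ and $n$.
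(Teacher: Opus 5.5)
Your argument is correct. This paper does not prove the statement itself; it quotes it from \cite[Lemma 3.2]{EKW-main}. So the natural comparison is with the proof that the tools the paper records from \cite{EKW-main} make available.

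Your forward direction is precisely \cref{lemma:new-ASM-from-increasing-at-essential-cell}. Raising the rank at an essential cell $(i,j)$ by one produces an ASM $B$ that agrees with $A$ off row $i$, so $\pi_i(A)\le B<A$.

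For the converse, the short route through those tools goes as follows. Since $\pi_i(A)<A$, choose $C$ with $\pi_i(A)\le C$ and $C$ covered by $A$ in strong order. The sandwich $\rk_{\pi_i(A)}\ge\rk_C\ge\rk_A$ forces $\rk_C$ to agree with $\rk_A$ off row $i$. Then \cref{lem:cover-by-add-to-ess-cell} says that $\rk_C$ and $\rk_A$ differ in exactly one cell, and that this cell lies in $\ess(A)$, hence in row $i$.

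Your maximal-interval argument replaces the appeal to the cover characterization, which is the substantive input in that route. Instead it does a self-contained computation on a single row of the corner sum matrix, using only the difference-condition description of ASM rank matrices. This buys elementarity plus slightly more information:
\begin{itemize}
\item row $i$ of $\rk_{\pi_i(A)}$ is the pointwise maximum of the admissible rows squeezed between rows $i-1$ and $i+1$;
\item every maximal run of columns on which that row exceeds row $i$ of $\rk_A$ contains an essential cell of $A$.
\end{itemize}
As you note, you do not really need \cite[Lemma 2.9]{EKW-main} either, since the pointwise maximum of two admissible rows is again admissible.
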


We may also understand the essential set in terms of the inversion set of $A$, defined as follows.  We say that $(i,j)$ is an \newword{inversion} of $A$ if 
\[\sum_{k=1}^jA_{i,k}=\sum_{l=1}^i A_{l,j}=0.\]
We write $\inv(A)$ for the set of inversions of $A$.  The essential set of $A$ may also be characterized as
\[
\ess(A)=\{(i,j)\in \inv(A) : (i + 1, j), (i, j + 1) \notin \inv(A)\}.
\]

\begin{example}\label{background-ex}
Let $A = \begin{pmatrix}
    0 & 0 & 1 & 0\\
    1 & 0 & -1 & 1\\
    0 & 0 & 1 & 0\\
    0 & 1 & 0 & 0
\end{pmatrix} \in \asm(4)$.  Its rank function, recorded as a matrix, is $\rk_A = \begin{pmatrix}
    0 & 0 & 1 & 1\\
    1 & 1 & 1 & 2\\
    1 & 1 & 2 & 3\\
    1 & 2 & 3 & 4
\end{pmatrix}$.  
We often picture an element of $\asm(n)$ as an $n \times n$ grid with $\bullet$ representing a $1$ and a $\circ$ representing $-1$. If we draw a line emanating right and down out of each $\bullet$ and stopping at a $\circ$ or at the boundary of the grid, the elements of $\inv(A)$ are exactly the blank tiles together with those containing a $\circ$.  In this example, the corresponding grid is 
\[
\begin{tikzpicture}[x=1.5em,y=1.5em]
\draw[step=1,gray, thin] (0,0) grid (4,4);
\draw[color=black, thick] (0,0) rectangle (4,4);
\draw[thick, color=blue] (0.5,0) -- (0.5,2.5)-- (2.4,2.5); 
\draw[thick, color=blue] (1.5,0) -- (1.5,0.5) -- (4,0.5); 
\draw[thick, color=blue] (2.5,0) -- (2.5,1.5) -- (4,1.5); 
\draw[thick, color=blue] (2.5,2.6) -- (2.5,3.5) -- (4,3.5); 
\draw[thick, color=blue] (3.5,0) -- (3.5,2.5)-- (4,2.5); 
\filldraw[black] (0.5,2.5) circle (.1); 
\filldraw[black] (1.5,.5) circle (.1); 
\filldraw[black] (2.5,1.5) circle (.1); 
\draw[black] (2.5,2.5) circle (.1); 
\filldraw[black] (2.5,3.5) circle (.1); 
\filldraw[black] (3.5,2.5) circle (.1); 
\end{tikzpicture}.
\] We call this grid the \newword{Rothe diagram} of $A$.  Then $\inv(A) = \{(1,1), (1,2), (2,3), (3,2)\}$.  The elements of $\ess(A)$ are the southeastmost elements of the regions of $\inv(A)$ that are connected by edges of the grid, i.e., $\ess(A) = \{(1,2), (2,3), (3,2)\}$. Because $\ess(A)$ has an element in rows $1$, $2$, and $3$, the descent set of $A$ is $\{1,2,3\}$. Continuing to record ASMs as marked grids, we have \[
\begin{tikzpicture}[x=1.5em,y=1.5em]
\draw[step=1,gray, thin] (0,0) grid (4,4);
\draw[color=black, thick] (0,0) rectangle (4,4);
\draw[thick, color=blue] (0.5,0) -- (0.5,3.5)-- (4,3.5); 
\draw[thick, color=blue] (1.5,0) -- (1.5,0.5) -- (4,0.5); 
\draw[thick, color=blue] (2.5,0) -- (2.5,1.5) -- (4,1.5); 
\draw[thick, color=blue] (3.5,0) -- (3.5,2.5)-- (4,2.5); 
\filldraw[black] (0.5,3.5) circle (.1); 
\filldraw[black] (1.5,.5) circle (.1); 
\filldraw[black] (2.5,1.5) circle (.1); 
\filldraw[black] (3.5,2.5) circle (.1); 
\node at (-1.5,2){$\pi_1(A) =$};
\end{tikzpicture} \qquad \begin{tikzpicture}[x=1.5em,y=1.5em]
\draw[step=1,gray, thin] (0,0) grid (4,4);
\draw[color=black, thick] (0,0) rectangle (4,4);
\draw[thick, color=blue] (0.5,0) -- (0.5,2.5)-- (4,2.5); 
\draw[thick, color=blue] (1.5,0) -- (1.5,0.5) -- (4,0.5); 
\draw[thick, color=blue] (2.5,0) -- (2.5,3.5) -- (4,3.5); 
\draw[thick, color=blue] (3.5,0) -- (3.5,1.5)-- (4,1.5); 
\filldraw[black] (0.5,2.5) circle (.1); 
\filldraw[black] (1.5,.5) circle (.1); 
\filldraw[black] (2.5,3.5) circle (.1); 
\filldraw[black] (3.5,1.5) circle (.1); 
\node at (-1.5,2){$\pi_2(A) =$};
\end{tikzpicture} \qquad \begin{tikzpicture}[x=1.5em,y=1.5em]
\draw[step=1,gray, thin] (0,0) grid (4,4);
\draw[color=black, thick] (0,0) rectangle (4,4);
\draw[thick, color=blue] (0.5,0) -- (0.5,2.5)-- (2.4,2.5); 
\draw[thick, color=blue] (2.5,0) -- (2.5,0.5) -- (4,0.5); 
\draw[thick, color=blue] (1.5,0) -- (1.5,1.5) -- (4,1.5); 
\draw[thick, color=blue] (2.5,2.6) -- (2.5,3.5) -- (4,3.5); 
\draw[thick, color=blue] (3.5,0) -- (3.5,2.5)-- (4,2.5); 
\filldraw[black] (0.5,2.5) circle (.1); 
\filldraw[black] (2.5,.5) circle (.1); 
\filldraw[black] (1.5,1.5) circle (.1); 
\draw[black] (2.5,2.5) circle (.1); 
\filldraw[black] (2.5,3.5) circle (.1); 
\filldraw[black] (3.5,2.5) circle (.1); 
\node at (-1.5,2){$\pi_3(A) =$};
\end{tikzpicture}.
\] We do not expect that the reader will want to verify these three computations of various $\pi_i(A)$ from the definition, though they may choose to in the course of reading \Cref{sect:computing}.

Regarding the ASM ideal, we compute \begin{align*}
I_A & = \left( z_{11}, z_{12}, z_{13}z_{21}, z_{13}z_{22}, \begin{vmatrix} 
z_{21} & z_{22}\\
z_{31} & z_{32}
\end{vmatrix}\right) \\
& = \left( z_{11}, z_{12}, z_{13}, \begin{vmatrix} 
z_{21} & z_{22}\\
z_{31} & z_{32}
\end{vmatrix}\right) \cap (z_{11}, z_{12}, z_{21}, z_{22})\\
& = I_{4132} \cap I_{3412}.
\end{align*} 
The minimal prime decomposition $I_A = I_{4132} \cap I_{3412}$ reflects the fact that 
\[\perm(A) = \{4132, 3412\}. \qedhere\]
\end{example}

\section{Reconnecting with monotone triangles}
\label{section:monotone_triangles}

The goal of this section is to describe the relationship between weak order on ASMs, as described in \cite{EKW-main} and in the present paper, and the characterization given by Hamaker and Reiner \cite{HR20} in terms of combinatorially equivalent objects called monotone triangles.  
We begin by recalling the relevant definitions.  
A \newword{monotone triangle} of size $n$ is a triangular array $m=(m(i,j))_{1 \leq j \leq i \leq n}$ with entries in $[n]$ satisfying the following rules:
\begin{equation}
\label{eq:monotone_conditions}
    m(i,j)<m(i,j+1),\ m(i,j)\leq m(i-1,j), \text{ and } m(i,j)\leq m(i+1,j+1)
\end{equation}
whenever both sides of the inequalities are defined.  
We arrange this array as pictured below.
\[
\begin{array}{ccccc}
m(1,1) &  &  &  & \\
m(2,1) & m(2,2) &   & & \\
 \vdots &  \vdots  &  \ddots  & & \\
m(n,1) & m(n,2)& \cdots & m(n,n) \\
\end{array}
\]
The conditions in \cref{eq:monotone_conditions} say that entries strictly increase along rows, weakly increase up columns, and weakly increase down diagonals.  This definition is equivalent to the one in \cite{HR20} in terms of subsets that \emph{interlace}. Write $\monotone(n)$ for the set of monotone triangles of size $n$.  

Mills, Robbins, and Rumsey \cite{MRR83} gave a bijection from alternating sign matrices to monotone triangles; for a proof, see, e.g., \cite[Proposition~3.4]{BSS}.  Given $A\in \asm(n)$, we form $m_A\in \monotone(n)$ as follows.  Let $\widetilde{A}$ be the matrix whose $i$th row is the sum of the first $i$ rows of $A$.  Then $\widetilde{A}$ necessarily consists of $0$'s and $1$'s, with exactly $i$ entries equal to $1$ in row $i$.  
We let $m_A(i,1)<m_A(i,2)<\cdots <m_A(i,i)$ be the ordered list of the column indices of the $1$'s in row $i$ of $\widetilde{A}$. 

\begin{example}
\label{ex: asm to monotone}
      Let 
  $A=\begin{pmatrix}
0  &0 & 1 & 0\\
 0 & 1& -1 & 1\\
 1 & 0 & 0 & 0\\
 0 & 0 & 1 & 0
    \end{pmatrix}.$  
Then $\widetilde{A} = \begin{pmatrix}
0 & 0 & 1 & 0 \\
0 & 1 & 0 & 1 \\
1 & 1 & 0 & 1 \\
1 & 1 & 1 & 1
\end{pmatrix}$ and so
\[
m_A =
\begin{array}{cccc}
3\\
2&4\\
1&2&4\\
1&2&3&4
\end{array}.\qedhere
\]
\end{example}

We say that $m\in \monotone(n)$ is a \newword{key} if the entries in row $i$ of $m$ form a subset of the entries in row $i+1$ of $m$ for all $i\in[n-1]$.  
The bijection between ASMs and monotone triangles restricts to a bijection from permutation matrices to keys.

Following \cite{LS96}, we define \newword{strong (Bruhat) order} on $\monotone(n)$ in the following way: We say \[m_A\leq m_B \text{ if } m_A(i,j)\leq m_B(i,j) \text{ for all } 1\leq j\leq i\leq n.\]  
For $w,v\in S_n$, we have $w\leq v$ if and only if $m_w\leq m_v$; this is the Ehresmann criterion for Bruhat order (see \cite[Theorem 2.6.3]{BB05}). 
 Indeed, corner sum matrices and monotone triangles induce the same lattice structure on ASMs, which we will show momentarily in \cref{lemma:monotone_and_rank_induce_same_poset}.

Before continuing, we record an observation, which we will use several times below. Fix $A \in \asm(n)$ and $i,j \in [n]$. Then 
\begin{equation}\label{eq:rank-from-triangle}
    \rk_A(i,j) = \sum_{k=1}^j \widetilde{A}_{i,k} = \#\{\ell \in [i] : m_A(i,\ell) \leq j\}.
\end{equation}

\begin{lemma}
\label{lemma:monotone_and_rank_induce_same_poset}
    Given $A,B\in \asm(n)$, $m_A\leq m_B$ if and only if $A \leq B$. Hence $\asm(n)$ and $\monotone(n)$ are isomorphic as posets under strong order.
\end{lemma}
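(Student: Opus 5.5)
We will prove the lemma by comparing, row by row, the data recorded in the $i$th row of $m_A$ and the $i$th row of $\rk_A$, using \cref{eq:rank-from-triangle}. Fix a row index $i\in[n]$. Let $S_A=\{m_A(i,1)<\cdots<m_A(i,i)\}$ and define $S_B$ similarly. By \cref{eq:rank-from-triangle}, $\rk_A(i,j)=\#\{s\in S_A: s\le j\}$. Both the rank comparison and the triangle comparison then reduce to the following elementary claim about two $i$-element subsets of $[n]$: writing $a_1<\cdots<a_i$ and $b_1<\cdots<b_i$ for their elements, we have $a_\ell\le b_\ell$ for all $\ell$ if and only if $\#\{a_\ell\le j\}\ge \#\{b_\ell\le j\}$ for all $j\in[n]$. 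Since strong order on $\asm(n)$ is $\rk_A\ge\rk_B$ entrywise, and strong order on $\monotone(n)$ is $m_A\le m_B$ entrywise, applying the claim in each row $i$ proves the lemma.

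For the forward direction of the claim, suppose $a_\ell\le b_\ell$ for all $\ell$, and fix $j$. Let $k=\#\{b_\ell\le j\}$. Then $b_1,\dots,b_k\le j$, hence $a_1,\dots,a_k\le j$, so $\#\{a_\ell\le j\}\ge k$.

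For the converse, suppose $a_\ell>b_\ell$ for some $\ell$, and take $j=b_\ell$. Then $\#\{b\le j\}=\ell$. Every $a_{\ell'}$ with $\ell'\ge \ell$ satisfies $a_{\ell'}\ge a_\ell>j$, so $\#\{a\le j\}\le \ell-1<\ell$. This contradicts the counting inequality.

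There is no real obstacle. The only care needed is to make sure the strong-order conventions line up: larger rank corresponds to smaller in strong order, which in turn corresponds to smaller triangle entries. Once this is checked, the poset isomorphism statement follows, because $A\mapsto m_A$ is already a bijection by \cite{MRR83}, and we have shown that it and its inverse preserve the order.
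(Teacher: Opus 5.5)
Your proof is correct and follows essentially the same route as the paper's: both directions reduce, via \cref{eq:rank-from-triangle}, to the row-by-row counting argument, and your converse (taking $j=b_\ell$ to get $\rk_B(i,j)=\ell>\rk_A(i,j)$) is exactly the paper's choice $t=m_B(i,j)$. Isolating it as a claim about two $i$-element subsets of $[n]$ is only a cosmetic repackaging of the same argument.
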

\begin{proof}
Let $\widetilde{A}$ be as in the bijection from \cite{MRR83}, as recalled above.  
If $m_A\leq m_B$, then for all $i,j\in [n]$, by \cref{eq:rank-from-triangle},
        \begin{align*}
        \rk_A(i,j)&= \#\{\ell \in [i] : m_A(i,\ell) \leq j\}\\
        &\geq \#\{\ell \in [i] : m_B(i,\ell) \leq j\}\\
        &=\rk_B(i,j).
    \end{align*}
Thus, by the definition of strong order, $A \leq B$.  

Conversely, suppose that $m_A\not \leq m_B$.  Then we may fix $i \in [n]$, $j \in [i]$ such that $m_A(i,j)>m_B(i,j)$, and set $t = m_B(i,j)$. 
 Then $\#\{\ell \in [i] : m_B(i,\ell) \leq t\} = j$ while $\#\{\ell \in [i] : m_A(i,\ell) \leq t\} \leq j-1$.  Hence $\rk_B(i,t) = j>\rk_A(i,t)$, and so $A\not \leq B$.
\end{proof}

In view of \cref{lemma:monotone_and_rank_induce_same_poset} and the fact that $\asm(n)$ is a lattice \cite{LS96}, we know that $\monotone(n)$ is a lattice. We record explicit formulas for meets and joins below. 

\begin{lemma}
\label{jointmeetMT}
    The set $\monotone(n)$ is a lattice.  Furthermore, given $a,b\in \monotone(n)$,
    \[(a\vee b)(i,j)=\max(a(i,j),b(i,j))\] and
    \[(a\wedge b)(i,j)=\min(a(i,j),b(i,j))\]
    for all $i,j$ with $1\leq j\leq i\leq n$.
\end{lemma}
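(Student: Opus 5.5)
The lattice property itself is already a consequence of \cref{lemma:monotone_and_rank_induce_same_poset} together with the fact that $\asm(n)$ is a lattice \cite{LS96}. The content of the statement is therefore the explicit formulas. My plan is to verify directly that the entrywise maximum and minimum of two monotone triangles are again monotone triangles. Once that is established, the formulas follow formally. Under the partial order $a\le b$ iff $a(i,j)\le b(i,j)$ for all $i,j$, any upper bound $c$ of $a$ and $b$ satisfies $c(i,j)\ge \max(a(i,j),b(i,j))$ entrywise. So if the entrywise max is itself in $\monotone(n)$, it is the least upper bound. Dually, the entrywise min is the greatest lower bound. This argument does not even need the lattice result from \cite{LS96}.

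It remains to check that $c=\max(a,b)$ (entrywise) satisfies the three conditions in \cref{eq:monotone_conditions}; the case of $\min$ is symmetric. The entries of $c$ lie in $[n]$, and the triangular shape is unchanged.

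\emph{Weak inequalities.} If $a(i,j)\le a(i-1,j)$ and $b(i,j)\le b(i-1,j)$, then taking maxima of both sides preserves the inequality, since $\max$ is monotone in each argument. The same reasoning gives $c(i,j)\le c(i+1,j+1)$.

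\emph{Strict row increase.} If $a(i,j)<a(i,j+1)$ and $b(i,j)<b(i,j+1)$, then
\[
\max(a(i,j),b(i,j))<\max(a(i,j+1),b(i,j+1)).
\]
To see this, suppose without loss of generality that the left side equals $a(i,j)$. Then $a(i,j)<a(i,j+1)\le$ the right side. For $\min$, suppose the right side equals $a(i,j+1)$; then the left side is at most $a(i,j)<a(i,j+1)$.

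The only point requiring any care is the strict inequality along rows, since $\max$ and $\min$ do not automatically preserve strict inequalities between unrelated pairs. The short case analysis above handles it, because both inequalities being compared run in the same direction. I do not expect a genuine obstacle.

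Finally, the lattice structure transfers to $\asm(n)$ via the bijection $A\mapsto m_A$, by \cref{lemma:monotone_and_rank_induce_same_poset}.
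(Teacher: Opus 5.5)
Your proposal is correct and takes the same route as the paper: show that entrywise maxima and minima still satisfy the inequalities in \cref{eq:monotone_conditions}, so they are the join and meet under the entrywise order. The paper omits the details; your case check for the strict row inequality and your argument that any upper (resp.\ lower) bound dominates (resp.\ is dominated by) the entrywise max (resp.\ min) supply exactly those details.
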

\begin{proof}
    One may verify this from the defining inequalities for $\monotone(n)$ (as given in \cref{eq:monotone_conditions}) by confirming that taking pointwise minima and maxima produces a valid element of $\monotone(n)$; we omit the details.
\end{proof}

The covering relations in $\monotone(n)$ under strong order admit a natural description in terms of the entries of $m$. Fix $m\in \monotone(n)$, $a \in [n]$, and $b \in [a]$.  Form the array $m'$ defined by 
\[m'(i,j)=\begin{cases}
    m(a,b)-1 &\text{if } (i,j)=(a,b)\\
    m(i,j) &\text{otherwise.}
\end{cases}
\] 
We call $(a,b)$ an \newword{essential point} of the monotone triangle $m\in \monotone(n)$ if $m'\in \monotone(n)$. It is clear that $m$ covers any such $m'$. By \cite[Corollary 6.7]{For08}, these are the only covering relations.

\begin{example}
    The essential points of the monotone triangle from \cref{ex: asm to monotone} are $(1,1)$, $(2,1)$, and $(2,2)$.
\end{example}

\begin{lemma}
\label{lemma:monotone_essential_set_translation}
    Let $A\in \asm(n)$.  Then $(a,b)$ is an essential point of $m_A$ if and only if $(a,m_A(a,b)-1)\in \ess(A)$.
\end{lemma}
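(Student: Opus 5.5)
The plan is to translate both conditions into statements about row $a$ of $\widetilde{A}$ and the corner sums near the cell $(a,c)$, where $c = m_A(a,b)-1$. Throughout, \cref{eq:rank-from-triangle} is the dictionary between the two sides: $\rk_A(i,j)$ counts the entries of row $i$ of $m_A$ that are at most $j$, and $\widetilde{A}_{i,j}=\rk_A(i,j)-\rk_A(i,j-1)$.

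First I would spell out what it means for $(a,b)$ to be an essential point. Decrementing $m(a,b)$ keeps a monotone triangle exactly when three things hold:
\begin{enumerate}
\item $m(a,b)-1>m(a,b-1)$ (or $b=1$ and $m(a,b)\ge 2$);
\item $m(a,b)-1\ge m(a+1,b)$ (vacuous if $a=n$);
\item $m(a,b)-1\ge m(a-1,b-1)$ (vacuous if $b=1$).
\end{enumerate}
No other inequality in \cref{eq:monotone_conditions} involving $(a,b)$ can fail when that entry decreases. Condition (1) says $c\notin$ row $a$ of $m_A$, i.e.\ $\widetilde{A}_{a,c}=0$, so $\rk_A(a,c)=\rk_A(a,c-1)=b-1$. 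Condition (2) says row $a+1$ has at least $b$ entries $\le c$, i.e.\ $\rk_A(a+1,c)\ge b$. By interlacing (the monotone conditions force $\rk_A(a+1,c)\le \rk_A(a,c)+1$), this means $\rk_A(a+1,c)=b=\rk_A(a,c)+1$. Condition (3) says row $a-1$ has at least $b-1$ entries $\le c$, i.e.\ $\rk_A(a-1,c)\ge b-1$. Since also $\rk_A(a-1,c)\le \rk_A(a,c)=b-1$, this means $\rk_A(a-1,c)=\rk_A(a,c)$. Finally, $c+1=m_A(a,b)$ gives $\rk_A(a,c+1)=b=\rk_A(a,c)+1$.

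Together these are exactly the five equalities defining $(a,c)\in\ess(A)$. Boundary conventions matter here: $\rk_A(0,\cdot)=\rk_A(\cdot,0)=0$. When $b=1$, condition (1) forces $c\ge 1$. For $a=n$, row $n$ of $m_A$ is $1,\dots,n$, so no decrement is possible, and correspondingly there is no essential cell in row $n$.

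For the converse, start with $(a,c)\in\ess(A)$ and set $b=\rk_A(a,c)+1$. The equality $\rk_A(a,c+1)=\rk_A(a,c)+1$ shows that $c+1$ is the $b$-th entry of row $a$ of $m_A$, so $m_A(a,b)=c+1$. Reading the chain of equivalences above backwards then recovers conditions (1)--(3). The same computation shows that the map $(a,b)\mapsto(a,m_A(a,b)-1)$ is inverse to $(a,c)\mapsto(a,\rk_A(a,c)+1)$ on these sets, which is what the ``if and only if'' requires.

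The main obstacle is the bookkeeping in converting the inequalities (2) and (3) into equalities. This uses the interlacing bounds $\rk_A(a,c)\le\rk_A(a+1,c)\le\rk_A(a,c)+1$, which follow from the column partial sums of $A$ lying in $\{0,1\}$. It also requires handling the edge cases $b=1$, $b=a$, and $a=n$ carefully.
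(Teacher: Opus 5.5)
Your proposal is correct and is essentially the paper's argument: both translate the three decrement conditions $m_A(a,b-1)<c$, $m_A(a+1,b)\le c$, $m_A(a-1,b-1)\le c$ through \cref{eq:rank-from-triangle} into the five rank equalities defining $\ess(A)$, using that adjacent-row rank differences lie in $\{0,1\}$ to turn the inequalities into equalities; your version packages this as one chain of equivalences and treats the boundary cases more explicitly. One small slip: $0\le\rk_A(a+1,c)-\rk_A(a,c)=\sum_{k\le c}A_{a+1,k}\le 1$ comes from the partial \emph{row} sums of $A$ lying in $\{0,1\}$ (or from interlacing, as you also note), not from the column partial sums.
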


\begin{proof}

\noindent $(\Rightarrow)$ Suppose that $(a,b)$ is an essential point of $m_A$.  For convenience, set $j=m_A(a,b)-1$.  By the definition of essential point, we have $m_A(a,b-1)< j$ and also $m_A(a-1,b-1),m_A(a+1,b)\leq j$.

Because $m_A(a,b)=j+1$, we have $\rk_A(a,j+1)=b$ and also $\rk_A(a,j)=b-1$ from \cref{eq:rank-from-triangle}.  Set $k = m_A(a,b-1)<j$.  Then, by \cref{eq:rank-from-triangle}, $\rk_A(a,k)=b-1$.  The inequalities $k \leq j-1<j$ together with the equalities $\rk_A(a,k)=b-1 = \rk_A(a,j)$ force $\rk_A(a,j-1)=b-1$ as well.  Thus, \begin{equation}\label{essential-equality-1}
    \rk_A(a,j) = \rk_A(a,j-1) \qquad \mbox{ and } \qquad \rk_A(a,j) = \rk_A(a,j+1)-1.
\end{equation}

Because $m_A(a-1,b-1)\leq j$, $b-1$ first appears weakly before column $j$ in row $a-1$ of $\rk_A$.  In particular, because $\rk_A(a,j)=b-1\geq \rk_A(a-1,j)\geq b-1$, we must have $\rk_A(a-1,j)=b-1$. Thus, \begin{equation}\label{essential-equality-2}
    \rk_A(a,j) = \rk_A(a-1,j).
\end{equation}

Finally, because $\rk_A(a+1,j)-\rk_A(a,j) \in \{0,1\}$ from the definitions of ASM and rank function, and because $\rk_A(a,j)=b-1$, we have $\rk_A(a+1,j)\in\{b-1,b\}$.  Set $p = m_A(a+1,b)\leq j$. Then by \cref{eq:rank-from-triangle}, $\rk_A(a+1,p) = b$, and so $\rk_A(a+1,j)\geq b$.  Combining $\rk_A(a+1,j)\in\{b-1,b\}$ with $\rk_A(a+1,j)\geq b$, we have $\rk_A(a+1,j) = b$.

Thus, \begin{equation}\label{essential-equality-3}
    \rk_A(a,j) = \rk_A(a+1,j)-1.
\end{equation}

Combining \cref{essential-equality-1}, (\ref{essential-equality-2}), and (\ref{essential-equality-3}), we have $(a,j) \in \ess(A)$ by the definition of essential cell.

\noindent $(\Leftarrow)$ Suppose that $(a,j)\in \ess(A)$. Let $b=\rk_A(a,j)+1$. We will show that $j=m_A(a,b)-1$ and that $(a,b)$ is an essential point of $m_A$.  

From the definition of essential cell, we know that $\rk_A(a,j-1)=b-1$ and $\rk_A(a,j+1)=b$.  Then by \cref{eq:rank-from-triangle}, $m_A(a,b)=j+1$.  

To show that $(a,b)$ is an essential point of $m_A$, we must verify that decreasing $m_A(a,b)$ by $1$ results in a valid monotone triangle.  From the definition of essential cell, we have that $\rk_A(a,j-1)=b-1$, $\rk_A(a-1,j)=b-1$, and $\rk_A(a+1,j)=b$.  Translating these rank conditions to conditions on $m_A$ via \cref{eq:rank-from-triangle} shows $m_A(a,b-1)\leq j-1$, $m_A(a-1,b-1)\leq j$, and $m_A(a+1,b)\leq j$, respectively. Thus, replacing $m_A(a,b)$ by $m_A(a,b)-1$ yields a valid monotone triangle, and so $(a,b)$ is an essential point of $m_A$.
\end{proof}

We recall the definition of weak order and descent for monotone triangles from \cite{HR20}. For $m \in \monotone(n)$ and $i \in [n-1]$, let $\pi_i(m)$ denote the monotone triangle whose entries in rows $j \neq i$ agree with the entries in $m$ and whose entries in row $i$ are minimal among all elements of $\monotone(n)$ subject to that condition. By \cite{HR20}, $\pi_i(m)$ exists and is well defined.  Declare $\pi_i(m) \prec m$ to be a covering relation provided $\pi_i(m) \neq m$, and let the \newword{weak order} on $\monotone(n)$ be the transitive closure of those covering relations. We call $i$ a \newword{descent} of $m$ if $\pi_i(m) \neq m$, in which case $\pi_i(m)<m$ (in strong order).

\begin{lemma}
    \label{descentessentialMT}
    Let $m \in \monotone(n)$. Then $i$ is a descent of $m$ if and only if $m$ has an essential point in row $i$.
\end{lemma}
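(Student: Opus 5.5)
Two routes are available. One is to translate through \cref{lemma:monotone_essential_set_translation} and \cref{lemma:descentessential}. The other is to argue directly on monotone triangles, which is cleaner and avoids knowing that $\pi_i$ on triangles matches $\pi_i$ on ASMs. I would argue directly. By definition, $\pi_i(m)$ agrees with $m$ outside row $i$, and its row $i$ is the entrywise-minimal admissible row given the fixed rows $i-1$ and $i+1$. So $i$ is a descent exactly when row $i$ of $m$ is not already this minimal row.

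($\Leftarrow$) Suppose $(i,b)$ is an essential point of $m$. Then the array $m'$ obtained by decreasing $m(i,b)$ by one lies in $\monotone(n)$ and agrees with $m$ outside row $i$. Since $\pi_i(m)$ is minimal among such triangles, $\pi_i(m)\le m'<m$ entrywise, so $\pi_i(m)\neq m$.

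($\Rightarrow$) Suppose $\pi_i(m)\neq m$, and write $p=\pi_i(m)$. Then $p$ agrees with $m$ off row $i$ and $p(i,j)\le m(i,j)$ for all $j$, with strict inequality for some $j$. Let $b$ be the \emph{smallest} index with $p(i,b)<m(i,b)$. I claim $(i,b)$ is an essential point of $m$, which means checking the three inequalities in \cref{eq:monotone_conditions} that involve the entry $m(i,b)-1$:
\begin{itemize}
\item $m(i,b-1)<m(i,b)-1$. By minimality of $b$, $m(i,b-1)=p(i,b-1)<p(i,b)\le m(i,b)-1$.
\item $m(i,b)-1\ge m(i+1,b)$. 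We have $m(i+1,b)=p(i+1,b)\le p(i,b)\le m(i,b)-1$, using the column condition for $p$.
\item $m(i,b)-1\ge m(i-1,b-1)$. We have $m(i-1,b-1)=p(i-1,b-1)\le p(i,b)\le m(i,b)-1$, using the diagonal condition for $p$.
\end{itemize}
The remaining inequalities, namely $m(i,b)-1<m(i,b+1)$ and the ones with $(i,b)$ on the larger side, only become easier when $m(i,b)$ decreases. Boundary cases, where $b=1$, $b=i$, or $i=1$ so that some neighbor is undefined, simply drop the corresponding condition.

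The main subtlety is choosing $b$ as the smallest index where $p$ and $m$ differ, so that the row-strictness condition to the left holds. Once that choice is made, the rest is bookkeeping with the monotone-triangle inequalities, using that $p$ itself is a valid monotone triangle sharing rows $i\pm1$ with $m$. I would also note that the existence of $\pi_i(m)$ is taken from \cite{HR20}, as stated.
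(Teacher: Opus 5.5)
Your proof is correct, and it takes a genuinely different route from the paper's. The paper argues by translation. Writing $m=m_A$, it identifies descents of $m$ with descents of $A$ by appeal to \cref{lemma:monotone_and_rank_induce_same_poset}. This tacitly uses \cref{eq:rank-from-triangle} to see that agreeing with $m_A$ off row $i$ is the same as having the same corner sums as $A$ off row $i$. It then invokes \cite[Lemma 3.2]{EKW-main} to get an essential cell in row $i$, and converts essential cells to essential points with \cref{lemma:monotone_essential_set_translation}.

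You instead stay inside $\monotone(n)$. The reverse direction is just minimality of $\pi_i(m)$ applied to the decremented triangle. In the forward direction, taking $b$ to be the leftmost index where $\pi_i(m)$ and $m$ differ makes the three lower-bound inequalities follow from the monotone-triangle conditions for $\pi_i(m)$, which shares rows $i\pm1$ with $m$.

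Your version is self-contained. It needs neither the external lemma from \cite{EKW-main} nor the identification of descents of $m_A$ with descents of $A$. That identification is a special case of \cref{weak_order_isomorphism}, which the paper proves later with \cref{descentessentialMT} as an ingredient, so your argument keeps the logical order cleaner. The paper's route, in exchange, records the dictionary among descents, essential cells, and essential points that it reuses immediately in \cref{lemma:joindescentMT}.

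One point is worth stating explicitly in your write-up. $\pi_i(m)$ is the minimum, not merely a minimal element, of the set of triangles agreeing with $m$ off row $i$, so that $\pi_i(m)\le m'$ and $\pi_i(m)\le m$ entrywise. This holds by \cite{HR20}, or because that set is closed under the entrywise minimum of \cref{jointmeetMT}.
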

\begin{proof}
    Let $A \in \asm(n)$ be such that $m = m_A$. By \cref{lemma:monotone_and_rank_induce_same_poset}, $i$ is a descent of $m$ if and only if $i$ is a descent of $A$. By \cite[Lemma 3.2]{EKW-main}, $i$ is a descent of $A$ if and only if $A$ has an essential cell in row $i$. By \cref{lemma:monotone_essential_set_translation}, $A$ has an essential cell in row $i$ if and only if $m$ has an essential point in row $i$. 
\end{proof}

In \cite{EKW-main}, the authors showed that the operators $\pi_i$ distribute over joins. We will now show the same for monotone triangles, which will allow us to conclude that $\asm(n)$ and $\monotone(n)$ are isomorphic as posets under weak order. Said otherwise, this allows us to conclude that the definition of a descent of $A \in \asm(n)$ given in \cite{EKW-main} agrees with the one given in \cite{HR20}. We recall the key lemma from \cite{EKW-main} and then give its analogue for monotone triangles.

\begin{lemma}[\cite{EKW-main}, Lemma 3.3]
\label{lemma:joindescent}
    Suppose $A,B,C\in \asm(n)$ such that $A=B\vee C$.  If $i$ is a descent of $A$, then $i$ is a descent of $B$ or $i$ is a descent of $C$.  
\end{lemma}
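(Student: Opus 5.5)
We argue by contrapositive, using the essential-set criterion of \cref{lemma:descentessential}. Suppose $i$ is a descent of neither $B$ nor $C$, so neither $B$ nor $C$ has an essential cell in row $i$. We aim to show that $A=B\vee C$ has no essential cell in row $i$. The most convenient route is through monotone triangles. By \cref{lemma:monotone_and_rank_induce_same_poset}, the bijection $A\mapsto m_A$ is a strong-order isomorphism, so $m_A=m_B\vee m_C$. By \cref{jointmeetMT}, $m_A(k,\ell)=\max(m_B(k,\ell),m_C(k,\ell))$ for all entries. By \cref{lemma:monotone_essential_set_translation}, essential cells of an ASM in row $i$ correspond exactly to essential points of its monotone triangle in row $i$. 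It therefore suffices to show: if $(i,b)$ is an essential point of $m_B\vee m_C$, then $(i,b)$ is an essential point of $m_B$ or of $m_C$.

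So suppose $(i,b)$ is an essential point of $m=m_B\vee m_C$. This means that lowering $m(i,b)$ by one preserves the inequalities \cref{eq:monotone_conditions}. Concretely, we have
\[
m(i,b-1)<m(i,b)-1,\qquad m(i-1,b-1)\le m(i,b)-1,\qquad m(i+1,b)\le m(i,b)-1,
\]
whenever the entries are defined. Without loss of generality, $m(i,b)=m_B(i,b)\ge m_C(i,b)$. Each neighbouring entry of $m_B$ is at most the corresponding entry of $m$, so
\[
m_B(i,b-1)\le m(i,b-1)<m_B(i,b)-1,
\]
and similarly $m_B(i-1,b-1)\le m_B(i,b)-1$ and $m_B(i+1,b)\le m_B(i,b)-1$. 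These are exactly the conditions needed to lower $m_B(i,b)$ by one within $\monotone(n)$. Lowering an entry cannot violate the constraints where that entry must be at least something else, and the three inequalities above are the only constraints of the form ``entry $\le m_B(i,b)-1$''. Hence $(i,b)$ is an essential point of $m_B$, so $B$ has an essential cell in row $i$ and $i$ is a descent of $B$.

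The main care point is confirming that these three inequalities are exactly the lower-bound constraints on $m(i,b)$ coming from \cref{eq:monotone_conditions}. The row condition gives $m(i,b-1)<m(i,b)$. The column condition $m(i,b)\le m(i-1,b)$ at position $(i,b)$ is an upper bound, so it is unaffected by lowering. The condition at $(i+1,b)$ reads $m(i+1,b)\le m(i,b)$. The diagonal condition, applied at $(i-1,b-1)$, reads $m(i-1,b-1)\le m(i,b)$. We must also handle boundary cases where $b=1$, $b=i$, or $i=n$ by dropping the undefined conditions. The remaining steps are a direct transfer through the cited lemmas.
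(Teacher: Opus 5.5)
Your proof is correct, but it runs in the opposite direction from the paper. The paper does not prove this statement. It cites it from \cite{EKW-main} and uses it as the input for the monotone-triangle analogue, \cref{lemma:joindescentMT}, carrying the ASM statement across the dictionary of \cref{lemma:descentessential} and \cref{lemma:monotone_essential_set_translation}.

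You instead prove the monotone-triangle statement directly from the pointwise-maximum description of joins in \cref{jointmeetMT}. The key step is the squeeze: $m_B(i,b)=m(i,b)$, while $m_B\le m$ at the three neighbours that bound $m(i,b)$ from below. You then pull the result back through the same dictionary. This is not circular, because none of the results you invoke (\cref{lemma:descentessential}, \cref{lemma:monotone_and_rank_induce_same_poset}, \cref{jointmeetMT}, \cref{lemma:monotone_essential_set_translation}) relies on the statement.

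Your route buys two things. First, \cref{lemma:joindescentMT} becomes self-contained rather than imported. Second, it proves more than is stated: since $m_B(i,b)=m_A(i,b)$, the very essential cell $(i,m_A(i,b)-1)$ of $A$ is an essential cell of $B$, so $\ess(B\vee C)\subseteq \ess(B)\cup\ess(C)$.

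The detour through monotone triangles is optional. By \cref{jointmeetMT} and \cref{eq:rank-from-triangle}, $\rk_{B\vee C}=\min(\rk_B,\rk_C)$ pointwise. So if $(i,j)\in\ess(A)$ with $\rk_A(i,j)=\rk_B(i,j)=r$, the same squeeze forces $\rk_B(i,j-1)=\rk_B(i-1,j)=r$ and $\rk_B(i,j+1)=\rk_B(i+1,j)=r+1$. This gives the result on ASMs directly.

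Two small points. You announce a contrapositive but then argue directly, which is harmless. Also, the range requirement $m_B(i,b)-1\ge 1$ is not among your three inequalities, but it follows from $m_B(i+1,b)\le m_B(i,b)-1$. That inequality is always present, because a descent has $i\le n-1$.
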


\begin{lemma}
\label{lemma:joindescentMT}
    Suppose $m_A,m_B,m_C\in \monotone(n)$ such that $m_A=m_B\vee m_C$.  Let $i \in [n-1]$. If $i$ is a descent of $m_A$, then $i$ is a descent of $m_B$ or $i$ is a descent of $m_C$.  
\end{lemma}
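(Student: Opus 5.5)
We argue directly in the language of monotone triangles. By \cref{descentessentialMT}, it suffices to produce an essential point in row $i$ of $m_B$ or of $m_C$, given one in row $i$ of $m_A$. (Alternatively one could transport \cref{lemma:joindescent} through \cref{lemma:monotone_and_rank_induce_same_poset}, since strong orders agree and so joins agree. We would also need descents to agree, which via \cref{descentessentialMT} and \cref{lemma:descentessential} they do. That route is essentially immediate, but we give the direct argument as the intended analogue.)

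Fix an essential point $(i,b)$ of $m_A$. Decreasing $m_A(i,b)$ by one yields a monotone triangle, so
\[
m_A(i,b)-1>m_A(i,b-1),\qquad m_A(i,b)-1\ge m_A(i-1,b-1),\qquad m_A(i,b)-1\ge m_A(i+1,b),
\]
whenever these entries are defined. By \cref{jointmeetMT}, $m_A(i,b)=\max(m_B(i,b),m_C(i,b))$. Without loss of generality $m_A(i,b)=m_B(i,b)$. Every other entry of $m_B$ is at most the corresponding entry of $m_A$. Hence
\[
m_B(i,b)-1=m_A(i,b)-1>m_A(i,b-1)\ge m_B(i,b-1),
\]
and similarly $m_B(i,b)-1\ge m_B(i-1,b-1)$ and $m_B(i,b)-1\ge m_B(i+1,b)$.

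These three inequalities are exactly the constraints from \cref{eq:monotone_conditions} involving the lowered entry that could fail. Lowering an entry can only help the remaining inequalities: those where $(i,b)$ is on the large side, namely $m(i,b)\le m(i-1,b)$, $m(i,b)\le m(i+1,b+1)$ and $m(i,b)<m(i,b+1)$. So $(i,b)$ is an essential point of $m_B$, and $i$ is a descent of $m_B$ by \cref{descentessentialMT}.

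The only step needing care is listing exactly which inequalities involve the entry $(i,b)$ with $(i,b)$ on the larger side, handling boundary cases where neighbors are undefined. This is routine; there is no real obstacle, since the pointwise max description of joins in \cref{jointmeetMT} makes the argument local.
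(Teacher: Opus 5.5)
Your direct argument is correct, but it is not the route the paper takes. The paper's proof is exactly the transport you sketch parenthetically. It passes from an essential point of $m_A$ in row $i$ to an essential cell of $A$ in row $i$ via \cref{lemma:monotone_essential_set_translation}, hence to a descent of $A$ by \cref{lemma:descentessential}. It then invokes the ASM statement \cref{lemma:joindescent}, with $A=B\vee C$ coming implicitly from the strong-order isomorphism of \cref{lemma:monotone_and_rank_induce_same_poset}, and translates back the same way.

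Your proof instead stays inside monotone triangles and uses only the pointwise-max description of joins from \cref{jointmeetMT}. It never needs \cref{lemma:joindescent}, and it proves something sharper: the same essential point $(i,b)$ is inherited by whichever of $m_B, m_C$ attains $m_A(i,b)$. Run back through the same dictionary, it even gives an independent proof of \cref{lemma:joindescent}, though it still relies on \cref{lemma:descentessential} through \cref{descentessentialMT}. The paper's route is shorter given the machinery already in place, and it emphasizes that the monotone-triangle statement is literally a translation of the ASM one.

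Two small fixes to your write-up. First, the three inequalities that lowering cannot break, namely $m(i,b)\le m(i-1,b)$, $m(i,b)\le m(i+1,b+1)$ and $m(i,b)<m(i,b+1)$, are the ones with $(i,b)$ on the \emph{small} side, not the large side. Second, the requirement that entries stay in $[n]$ after lowering is covered by $m_B(i,b)-1\ge m_B(i+1,b)\ge 1$. That entry is always defined because $i\le n-1$.
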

\begin{proof}
  Suppose $i$ is a descent of $m_A$.  Then by \cref{descentessentialMT}, $m_A$ has an essential point in row $i$, say at $(i,j)$. 
  
  Let $A, B, C \in \asm(n)$ correspond to $m_A$, $m_B$, and $m_C$, respectively. By \cref{lemma:monotone_essential_set_translation}, $A$ has an essential cell at $(i,m_A(i,j)-1)$. Hence by \cref{lemma:descentessential}, $i$ is a descent of $A$. Now by \cref{lemma:joindescent}, $i$ is a descent of $B$ or $i$ is a descent of $C$. Then, using \cref{lemma:descentessential} again, $B$ has an essential cell in row $i$ or $C$ has an essential cell in row $i$, whence, by \cref{lemma:monotone_essential_set_translation}, $m_B$ or $m_C$ has an essential point in row $i$. Finally, by \cref{descentessentialMT} again, $i$ is a descent of $m_B$ or $i$ is a descent of $m_C$.
\end{proof}

The proof of the next proposition follows \cite[Proposition 3.4]{EKW-main}, which is the corresponding statement for ASMs.

\begin{proposition}\label{prop:joindescentpart3MT}
    Suppose $m,m_1,\ldots,m_k\in \monotone(n)$ such that $m=m_1\vee \cdots \vee m_k$.  Let $i\in [n-1]$.  Then $\pi_i(m)=\pi_i(m_1)\vee \cdots \vee \pi_i(m_k)$.  
\end{proposition}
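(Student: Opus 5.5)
We prove the two inequalities $\pi_i(m_1)\vee\cdots\vee\pi_i(m_k)\le\pi_i(m)$ and $\pi_i(m)\le\pi_i(m_1)\vee\cdots\vee\pi_i(m_k)$ in strong order, working throughout with the explicit entrywise descriptions of \cref{jointmeetMT}. Write $p=\pi_i(m_1)\vee\cdots\vee\pi_i(m_k)$. By \cref{jointmeetMT}, for each row $r\neq i$ we have $p(r,\cdot)=\max_\ell m_\ell(r,\cdot)=m(r,\cdot)$, since $\pi_i$ does not change rows other than $i$. Hence $p$ lies in the set of monotone triangles that agree with $m$ off row $i$. The triangle $\pi_i(m)$ is the minimum of that set, so $\pi_i(m)\le p$.

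For the reverse inequality it suffices to show $\pi_i(m)\ge \pi_i(m_\ell)$ for each $\ell$. The most direct route is to define $q=\pi_i(m)\wedge m_\ell$ entrywise, which is a monotone triangle by \cref{jointmeetMT}. In rows $r\ne i$ this gives $q(r,\cdot)=\min(m(r,\cdot),m_\ell(r,\cdot))=m_\ell(r,\cdot)$, because $m\ge m_\ell$. So $q$ agrees with $m_\ell$ off row $i$, and minimality gives $\pi_i(m_\ell)\le q\le \pi_i(m)$. Taking the join over $\ell$ yields $p\le\pi_i(m)$, and so $p=\pi_i(m)$.

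The only point needing care is that \cref{jointmeetMT} really does give monotone triangles under pointwise min and max, which is assumed from the earlier lemma. We also need the existence of $\pi_i$ as a minimum, which comes from \cite{HR20}. There is no real obstacle here. If one preferred to mirror \cite[Proposition 3.4]{EKW-main}, one could instead induct on $k$ and use \cref{lemma:joindescentMT} to handle the case where $i$ is not a descent of some $m_\ell$. However, the meet argument above avoids that case analysis, so I would present it, noting that \cref{lemma:joindescentMT} follows as a corollary: if $i$ is a descent of neither $m_B$ nor $m_C$, then $\pi_i(m_A)=m_B\vee m_C=m_A$.
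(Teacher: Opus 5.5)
Your argument is correct, and it takes a different route from the paper's.

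\textbf{The paper's route.} The paper inducts on $k$. For $k=2$ it sets $m'=\pi_i(m_1)\vee\pi_i(m_2)$ and notes that $\pi_i(m')=\pi_i(m)$, since $m'$ agrees with $m$ off row $i$. It then rules out $m'\neq\pi_i(m)$ using \cref{lemma:joindescentMT}: otherwise $i$ would be a descent of $m'$, hence of some $\pi_i(m_\ell)$, contradicting idempotence of $\pi_i$. That lemma is itself proved by passing to ASMs, through essential points, essential cells, and \cite[Lemma 3.3]{EKW-main}.

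\textbf{Your route.} You get $\pi_i(m)\le p$ directly from minimality. You get the reverse inequality by showing that $\pi_i$ is order-preserving, via the observation that $\pi_i(m)\wedge m_\ell$ is a monotone triangle agreeing with $m_\ell$ off row $i$. This uses only two ingredients: the entrywise description of $\vee$ and $\wedge$ in \cref{jointmeetMT}, and the definition of $\pi_i$ as a minimum. The same closure under entrywise minima also gives existence of $\pi_i$ without appeal to \cite{HR20}.

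\textbf{Comparison.} Your argument treats all $k$ at once and reverses the logical dependence: \cref{lemma:joindescentMT} becomes a corollary rather than an input, as you note. The paper's route buys a proof that visibly parallels \cite[Proposition 3.4]{EKW-main} and keeps descents and essential points at the center. Yours buys self-containment within $\monotone(n)$. It also transfers verbatim to $\asm(n)$: by \cref{eq:rank-from-triangle}, entrywise maxima and minima of monotone triangles correspond to entrywise minima and maxima of corner sum functions. So the fiber $\{B : \rk_B(a,b)=\rk_A(a,b) \text{ for all } a\neq i\}$ is likewise closed under $\vee$ and $\wedge$.
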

\begin{proof}
We proceed by induction on $k$.  If $k=1$, there is nothing to show.  Suppose $k=2$. For convenience, we write $m'=\pi_i(m_1)\vee \pi_i(m_2)$.  It follows from \cref{jointmeetMT} that $m$ agrees with $m'$ outside of row $i$.  As such, $\pi_i(m')=\pi_i(m)$.  If $m' \neq \pi_i(m)$, then \cref{lemma:joindescentMT} implies that $i$ is a descent of $\pi_i(m_1)$ or of $\pi_i(m_2)$, which contradicts \cref{descentessentialMT}.  Thus, $m'=\pi_i(m)$ as desired.

For $k>2$, the result follows from a straightforward induction.
\end{proof}

\begin{proposition}
    \label{weak_order_isomorphism}
    Given $A \in \asm(n)$ and $i \in [n-1]$, $\pi_i(m_A) = m_{\pi_i(A)}$. Hence $\asm(n) \cong \monotone(n)$ as posets under weak order.
    \end{proposition}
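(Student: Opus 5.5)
Both $\pi_i(A)$ and $\pi_i(m_A)$ are defined as minima over strong order of a set of objects constrained to agree with the original away from row $i$. So the plan is to show that the bijection $A\mapsto m_A$ carries one constraint set onto the other, and then to use \cref{lemma:monotone_and_rank_induce_same_poset} to match the minima.

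Fix $A$ and $i$. The set in \cref{def:pi_i} is
\[\mathcal{S}_A=\{B\in\asm(n):\rk_B(a,b)=\rk_A(a,b)\text{ for all } a\neq i\}.\]
The set defining $\pi_i(m_A)$ is
\[\mathcal{T}_A=\{m\in\monotone(n): m(a,\cdot)=m_A(a,\cdot)\text{ for all } a\neq i\}.\]
I claim $B\in\mathcal{S}_A$ if and only if $m_B\in\mathcal{T}_A$. By \cref{eq:rank-from-triangle}, row $a$ of $\rk_B$ is the counting function $j\mapsto\#\{\ell\in[a]: m_B(a,\ell)\le j\}$ of row $a$ of $m_B$. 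Row $a$ of $m_B$ is a strictly increasing list, so it is the set of jump points of that function. Hence row $a$ of $\rk_B$ determines row $a$ of $m_B$ and conversely. Applying this for every $a\neq i$ gives the claim. Since $B\mapsto m_B$ is a bijection $\asm(n)\to\monotone(n)$, it restricts to a bijection $\mathcal{S}_A\to\mathcal{T}_A$.

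By \cref{lemma:monotone_and_rank_induce_same_poset}, this bijection is an isomorphism of posets under strong order. It therefore sends the minimum of $\mathcal{S}_A$, which is $\pi_i(A)$ and exists by \cite[Lemma 2.9]{EKW-main}, to the minimum of $\mathcal{T}_A$. It remains to check that the minimum of $\mathcal{T}_A$ is $\pi_i(m_A)$.

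This is the only real subtlety. Hamaker--Reiner define $\pi_i(m)$ by minimizing the entries of row $i$ rather than by strong order. Elements of $\mathcal{T}_A$ differ only in row $i$, and strong order on $\monotone(n)$ is entrywise comparison, so on $\mathcal{T}_A$ strong order is exactly entrywise comparison of row $i$. Thus the entrywise-minimal row-$i$ element (which exists by \cite{HR20}, or directly since $\mathcal{T}_A$ is closed under entrywise $\min$ by \cref{jointmeetMT}) is the strong-order minimum of $\mathcal{T}_A$. This gives $m_{\pi_i(A)}=\pi_i(m_A)$.

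For the poset statement, the covering relations of weak order on $\asm(n)$ are $\pi_i(A)\prec A$ with $\pi_i(A)\neq A$. The bijection $A\mapsto m_A$ sends these to $\pi_i(m_A)\prec m_A$ with $\pi_i(m_A)\neq m_A$, using injectivity of the bijection. These are exactly the covering relations generating weak order on $\monotone(n)$. Taking transitive closures on both sides shows the bijection is an isomorphism of weak orders.
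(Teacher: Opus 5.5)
Your proof is correct, and it takes a genuinely different route from the paper's.

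The paper first checks the case $A\in S_n$ by hand on keys. It then writes $A=w_1\vee\cdots\vee w_k$ with $w_j\in S_n$ via \cite{LS96} and pushes $\pi_i$ through the join on both sides: \cite[Proposition 3.4]{EKW-main} handles ASMs, and \cref{prop:joindescentpart3MT} handles monotone triangles. The latter rests on the essential-cell/essential-point dictionary and \cite[Lemma 3.3]{EKW-main}, and \cref{lemma:monotone_and_rank_induce_same_poset} transports the joins.

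You instead transport the defining minimization problem itself. \cref{eq:rank-from-triangle} matches the two constraint sets row by row, \cref{lemma:monotone_and_rank_induce_same_poset} matches the orders, and your observation that strong order on $\mathcal{T}_A$ is entrywise comparison of row $i$ reconciles the Hamaker--Reiner formulation with \cref{def:pi_i}.

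What your argument buys:
\begin{itemize}
\item It is shorter and avoids join decompositions and the permutation case entirely.
\item It actually proves the assertion following \cref{def:pi_i} that the two definitions of $\pi_i$ agree.
\item Through closure of $\mathcal{T}_A$ under entrywise minima (\cref{jointmeetMT}), it gives an independent proof that $\pi_i(A)$ exists.
\item The first step of the paper's proof of \cref{descentessentialMT} deduces from \cref{lemma:monotone_and_rank_induce_same_poset} alone that $m_A$ and $A$ have the same descents. That step implicitly needs exactly your identification of $\mathcal{S}_A$ with $\mathcal{T}_A$.
\end{itemize}

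In exchange, the paper's route establishes join-distributivity of the monotone-triangle operators as a result in its own right. It also sets up the reduction-to-permutations technique reused in the proof of \cref{prop:monotone_transpose_weak_order}.
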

\begin{proof}
If $A \in S_n$, we have that $m_A$ is a key. Let $a$ be the element of row $i$ that is not an element of row $i-1$ and $b$ be the element of row $i+1$ that is not an element of row $i$. Then row $i$ is minimized, subject to the conditions of interlacing with both rows $i-1$ and $i+1$, by replacing $a$ by $\min\{a,b\}$. One sees easily that the monotone triangle obtained in this way is that corresponding to $\pi_i(A)$, which is $As_i$ if $a>b$ and $A$ if $b>a$.

Now suppose $A\not \in S_n$.
    Using \cite[Theorem 4.4]{LS96}, write $A = w_1 \vee \cdots \vee w_k$,  with $w_j \in S_n$. By \cref{lemma:monotone_and_rank_induce_same_poset}, $m_A = m_{w_1} \vee \cdots \vee m_{w_k}$. Then by \cref{prop:joindescentpart3MT} and \cite[Proposition 3.4]{EKW-main},
    \begin{align*}
\pi_i(m_A) &= \pi_i(m_{w_1}) \vee \cdots \vee \pi_i(m_{w_k}) \\
&= m_{\pi_i({w_1})} \vee \cdots \vee m_{\pi_i(w_k)}\\
& = m_{\pi_i({w_1}) \vee \cdots \vee \pi_i(w_k)}\\
& = m_{\pi_i(w_1 \vee \cdots \vee w_k)}\\
& = m_{\pi_i(A)}. \qedhere
    \end{align*} 
\end{proof}

\section{Computing $\pi_i(A)$}
\label{sect:computing}

Our next goal is to describe the computation of  $\pi_i(A)$ in three ways. The first is straightforward: Using \cref{weak_order_isomorphism}, one option is to compute $\pi_i(m_A)$, as described in \cite{HR20}.  The second is the focus of \cref{sect:computing-pi_i(A)}, which is a direct argument on the ASM $A$ and serves as a primer for the pipe dream combinatorics to appear in \cref{subsection:bpdweakorder}. The third is a different monotone triangle computation, which runs through $m_{A^T}$ and which is the focus of \cref{sect:computing-pi_i(A)-with-column-weak-order}. 

\subsection{Direct computation of $\pi_i(A)$}
\label{sect:computing-pi_i(A)}
We will now give an algorithm for computing $\pi_i(A)$ directly from the ASM $A$, possibly in several steps.  Our first goal in this direction is \cref{prop:toggle-one-block-in-weak-order}, which will produce from an ASM $A$ with a descent at $i$ an ASM $B$ such that $\pi_i(A) \leq B<A$ (recalling that $<$ denotes a strong order relation). By induction on strong order, repeated application of \cref{prop:toggle-one-block-in-weak-order} will yield $\pi_i(A)$.  

It will not be hard to see that \cref{prop:toggle-one-block-in-weak-order} may be applied simultaneously at all essential cells of $A$ in row $i$.  This will not necessarily yield $\pi_i(A)$ but will, as it turns out, yield an ASM whose only essential cells in row $i$ contain a $-1$ and that require only one more application of \cref{prop:toggle-one-block-in-weak-order} to obtain $\pi_i(A)$.  We will omit this rather tedious proof.

The construction given in the proof of \cref{prop:toggle-one-block-in-weak-order} relies on two lemmas from \cite{EKW-main}.  We recall these lemmas presently.

\begin{lemma}{\cite[Lemma 2.17]{EKW-main}}\label{lemma:new-ASM-from-increasing-at-essential-cell}
    Let $A \in \asm(n)$, and suppose that $(i,j) \in \ess(A)$.  Let $f:[n] \times [n] \rightarrow [0,n]$ be the function 
\[f(a,b) = \begin{cases}
 \rk_A(a,b) & \text{if } (a,b) \neq (i,j)\\
 \rk_A(i,j)+1 & \text{if } (a,b) = (i,j).
\end{cases}\]
    Then there exists $B \in \asm(n)$ such that $\rk_B = f$.  Specifically, \[
B_{a,b} = \begin{cases}
A_{a,b}, &\text{ if } (a,b) \notin \{(i,j),(i+1,j),(i,j+1),(i+1,j+1)\} \\
A_{a,b}+1, &\text{ if } (a,b) \in \{(i,j), (i+1,j+1)\}\\
A_{a,b}-1, &\text{ if } (a,b) \in \{(i+1,j), (i,j+1)\}.
\end{cases}
    \]
\end{lemma}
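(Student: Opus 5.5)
The plan is to prove the statement directly from the rank function, with no appeal to monotone triangles. Set $r=\rk_A(i,j)$. Since $(i,j)\in\ess(A)$, the defining equalities give
\[
\rk_A(i,j-1)=\rk_A(i-1,j)=r,\qquad \rk_A(i,j+1)=\rk_A(i+1,j)=r+1 .
\]
Define $B$ by the explicit entrywise formula in the statement. The rank function of any matrix is the $2$-dimensional partial sum of its entries, so I will first compute $\rk_B$. The perturbation $B-A$ has $+1$ at $(i,j)$ and $(i+1,j+1)$ and $-1$ at $(i+1,j)$ and $(i,j+1)$. Its corner sums at $(a,b)$ count the signed perturbation cells lying in $[a]\times[b]$, and this count vanishes unless $a=i$ and $b=j$, where it equals $1$. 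Hence $\rk_B=f$. This step is routine.

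The main task is to show $B\in\asm(n)$. The plan is to use the characterization that a matrix $M$ is an ASM if and only if its corner sum function $g$, with the convention $g(0,\cdot)=g(\cdot,0)=0$, satisfies three conditions:
\[
g(a,n)=a,\qquad g(n,b)=b,\qquad g(a,b)-g(a,b-1)\in\{0,1\},\qquad g(a,b)-g(a-1,b)\in\{0,1\}.
\]
This equivalence follows from conditions (1)--(3) in the definition of ASM. Row partial sums of $M$ are differences $g(a,b)-g(a-1,b)$, so they lie in $\{0,1\}$ exactly when the vertical increments do, and columns are handled the same way. The total sum is then $g(n,n)=n$.

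Since $f$ agrees with $\rk_A$ away from $(i,j)$, only increments involving $(i,j)$ need checking. Note first that $(i,j)$ cannot lie in row $n$ or column $n$: the essential conditions require $\rk_A(i+1,j)$ and $\rk_A(i,j+1)$ to be defined. So the boundary conditions are preserved. The four affected increments are then computed from the displayed equalities:
\begin{itemize}
\item $f(i,j)-f(i,j-1)=(r+1)-r=1$;
\item $f(i,j)-f(i-1,j)=1$;
\item $f(i,j+1)-f(i,j)=(r+1)-(r+1)=0$;
\item $f(i+1,j)-f(i,j)=0$.
\end{itemize}
All four lie in $\{0,1\}$, so $f$ is the rank function of an ASM, and that ASM is $B$.

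The expected subtlety is the boundary case $i=1$ or $j=1$, where a rank value involves index $0$. Under the convention $\rk(0,\cdot)=0$ this is consistent: the essential condition forces $r=0$, and the increments computed above still hold. I will also double-check the claimed entries of $B$ directly, as a sanity check that they lie in $\{-1,0,1\}$. They do, since $A_{i,j}$ is either $0$ or $-1$ (an inversion has partial sums $0$).
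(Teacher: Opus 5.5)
Your proof is correct and complete. The identity $\rk_{B-A}(a,b)=([a\ge i]-[a\ge i+1])([b\ge j]-[b\ge j+1])=[a=i][b=j]$ gives $\rk_B=f$. Since $f$ differs from $\rk_A$ only at $(i,j)$, the corner-sum characterization of $\asm(n)$ then only requires checking the four increments through $(i,j)$ against the essential-cell equalities, which you do.

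The paper itself does not prove this lemma; it imports it from \cite[Lemma 2.17]{EKW-main}. So there is no in-paper argument to compare against, and yours stands on its own as a direct verification.

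Two small remarks:
\begin{enumerate}
\item Your closing ``sanity check'' on entries is unnecessary, and as written it inspects only one of the four altered cells. Partial row sums in $\{0,1\}$ already force every entry of $B$ into $\{-1,0,1\}$.
\item The fact that $(i,j)$ avoids row $n$ and column $n$ is cleanest via $\ess(A)\subseteq\inv(A)$. Every row and column sum of an ASM equals $1$, so no inversion can lie in the last row or last column.
\end{enumerate}
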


\begin{lemma}{\cite[Lemma 2.19]{EKW-main}}\label{lem:cover-by-add-to-ess-cell}
    Fix $A,B\in \asm(n)$.  Then $A$ covers $B$ in strong order if and only if there exists $(i,j)\in\ess(A)$ such that $\rk_A(a,b)=\rk_B(a,b)$ for all $(a,b)\neq (i,j)$ and $\rk_A(i,j)+1=\rk_B(i,j)$.  
\end{lemma}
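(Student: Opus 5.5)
The plan is to prove the two directions separately. The reverse direction comes straight from \cref{lemma:new-ASM-from-increasing-at-essential-cell}. For the forward direction, I would transport the question to monotone triangles, where the strong order covering relations are already known.

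\emph{($\Leftarrow$)} Suppose $(i,j)\in\ess(A)$ and $\rk_B$ agrees with $\rk_A$ except that $\rk_B(i,j)=\rk_A(i,j)+1$.
\begin{itemize}
\item Since $\rk_B\geq \rk_A$ entrywise, with strict inequality at $(i,j)$, we have $B<A$.
\item Let $C\in\asm(n)$ with $B\leq C\leq A$. Then $\rk_A\leq \rk_C\leq \rk_B$ entrywise.
\item So $\rk_C$ agrees with $\rk_A$ off $(i,j)$, and $\rk_C(i,j)\in\{\rk_A(i,j),\rk_A(i,j)+1\}$.
\item Thus $\rk_C$ equals $\rk_A$ or $\rk_B$. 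An ASM is recovered from its corner sum function by taking second differences, so $C=A$ or $C=B$.
\end{itemize}
Hence $A$ covers $B$. (The existence of such a $B$ for every essential cell is exactly \cref{lemma:new-ASM-from-increasing-at-essential-cell}, although this direction does not need it.)

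\emph{($\Rightarrow$)} Suppose $A$ covers $B$ in strong order.
\begin{itemize}
\item By \cref{lemma:monotone_and_rank_induce_same_poset}, the map $A\mapsto m_A$ is a poset isomorphism for strong order, so $m_A$ covers $m_B$ in $\monotone(n)$.
\item By the cited result \cite[Corollary 6.7]{For08}, every cover in $\monotone(n)$ is obtained by decreasing a single entry at an essential point by $1$. So there is an essential point $(a,b)$ of $m_A$ such that $m_B$ equals $m_A$ except that $m_B(a,b)=m_A(a,b)-1$.
\item Set $j=m_A(a,b)-1$. By \cref{lemma:monotone_essential_set_translation}, $(a,j)\in\ess(A)$.
\item Apply \cref{eq:rank-from-triangle}. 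Rows other than $a$ of the triangles agree, so $\rk_A(i',\cdot)=\rk_B(i',\cdot)$ for $i'\neq a$.
\item In row $a$, the counts $\#\{\ell : m(a,\ell)\leq t\}$ differ only when $t$ lies in $[m_A(a,b)-1,\,m_A(a,b))$, that is, only for $t=j$. There the count for $m_B$ is larger by exactly one.
\end{itemize}
Hence $\rk_B=\rk_A$ away from $(a,j)$ and $\rk_B(a,j)=\rk_A(a,j)+1$, as required.

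The main obstacle is the forward direction. It relies on knowing all covering relations of strong order, which I import from \cite{For08} via the monotone triangle isomorphism rather than proving directly on ASMs. If a self-contained argument were wanted, I would instead show directly that whenever $B<A$, some essential cell $(i,j)$ of $A$ has $\rk_B(i,j)>\rk_A(i,j)$. Concretely, I would take a cell where $\rk_B-\rk_A>0$ that is maximal southeast within its region and argue it is essential. Then \cref{lemma:new-ASM-from-increasing-at-essential-cell} produces an ASM $A'$ with $B\leq A'<A$, which forces $A'=B$ when $A$ covers $B$. Verifying that such a southeast-maximal cell really satisfies all the equalities defining an essential cell is the delicate part of that route.
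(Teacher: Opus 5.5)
Your proof is correct. The paper does not prove this lemma: it is quoted from \cite{EKW-main}, so there is no in-paper argument to match. Your forward direction is therefore a genuinely different route from a direct ASM argument.

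You use the strong-order isomorphism of \cref{lemma:monotone_and_rank_induce_same_poset} and the translation of \cref{lemma:monotone_essential_set_translation}. Through them, the statement becomes equivalent to Fortin's classification of covers in $\monotone(n)$, which the paper already quotes. This is legitimate and non-circular, since both lemmas are proved directly from \cref{eq:rank-from-triangle}. Your row-$a$ count is right: the two counts differ only at $t=m_A(a,b)-1$. The reverse direction is the standard sandwich argument. What your route buys is brevity, and the observation that the paper's two imported cover statements (for ASMs and for monotone triangles) are really one statement. What it costs is that the real content is outsourced to \cite{For08}. That content is the fact that any $B<A$ is detected at an essential cell of $A$ where $\rk_B>\rk_A$.

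Your sketched self-contained route does work, but not as literally phrased. A southeast-maximal cell of $\{\rk_B>\rk_A\}$ need not be essential. For example, take $A=w_0$ and $B=\mathrm{id}$ in $S_3$: that set is $\{1,2\}\times\{1,2\}$, and its southeast corner $(2,2)$ is not in $\inv(A)$. The fix is two walks.

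\emph{Northwest walk.} Suppose $\rk_B(i,j)>\rk_A(i,j)$ and $(i,j)\notin\inv(A)$. Then $\rk_A$ drops by exactly one passing to $(i-1,j)$ or to $(i,j-1)$, while $\rk_B$ drops by at most one. So the strict inequality persists. The walk stays in the grid because both rank functions vanish on row $0$ and column $0$, so it ends at a cell of $\inv(A)$.

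\emph{Southeast walk.} Inside $\inv(A)$, stepping to $(i,j+1)$ or $(i+1,j)$ keeps $\rk_A$ constant while $\rk_B$ weakly increases, so the strict inequality again persists. Walk southeast until neither neighbor lies in $\inv(A)$. That cell is in $\ess(A)$ by the characterization $\ess(A)=\{(i,j)\in\inv(A):(i+1,j),(i,j+1)\notin\inv(A)\}$.

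Then \cref{lemma:new-ASM-from-increasing-at-essential-cell} gives $A'$ with $B\le A'<A$, exactly as you say. This version keeps everything on ASMs. Through the translation lemma, it reproves Fortin's cover classification instead of using it.
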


 While computing $\pi_i(A)$, the entries of interest to us will be those elements $(i,j)$ of $\inv(A)$ such that $(i+1,j) \notin \inv(A)$.  We name this set for convenience during the proof that follows.

\begin{definition}
    Fix $A \in \asm(n)$ and $i \in [n-1]$.  Let \[
    F_i(A) = \{(i,j) \in \inv(A) : (i+1,j)\notin \inv(A)\}.
\]
\end{definition}

Note that the elements of $\ess(A)$ in row $i$ form a subset of $F_i(A)$. 

\begin{proposition}\label{prop:toggle-one-block-in-weak-order}
       Fix $A \in \asm(n)$, and suppose that $i$ is a descent of $A$.  Suppose that $(i,\ell-1) \in \ess(A)$, and let $j$ be the greatest integer such that $j < \ell$, $(i,j) \in F_i(A)$, and $(i,j-1) \notin F_i(A)$.  Define the matrix $B$ by \[
    B_{a,b} = \begin{cases}
        A_{a,b}, &\text{ if } (a,b) \notin \{(i,j), (i+1,j), (i, \ell), (i+1,\ell)\}\\
        A_{a,b}+1, &\text{ if } (a,b) \in \{(i,j), (i+1,\ell)\}\\
        A_{a,b}-1, &\text{ if } (a,b) \in \{(i+1,j), (i,\ell)\}.
    \end{cases} 
    \]  Then $B<A$ and $\pi_i(B) = \pi_i(A)$.
   \end{proposition}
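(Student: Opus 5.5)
The plan is to reduce everything to a statement about corner sum functions. First I will compute $\rk_B$ directly from the definition of $B$. The four modified entries form a $2\times 2$ pattern in rows $i,i+1$ and columns $j,\ell$, so a direct corner-sum computation gives
\[
\rk_B(a,b)=\rk_A(a,b)+\begin{cases}1 & \text{if } a=i \text{ and } j\le b\le \ell-1,\\ 0 & \text{otherwise.}\end{cases}
\]
The $-1$ at $(i+1,j)$ cancels the $+1$ at $(i,j)$ in all rows $a\ge i+1$, and similarly in columns $b\ge \ell$. Granting that $B\in\asm(n)$, both conclusions follow at once:
\begin{itemize}
\item $\rk_B\ge \rk_A$ pointwise with strict inequality at $(i,\ell-1)$, so $B<A$ in strong order.
\item $\rk_B$ and $\rk_A$ agree in every row $a\neq i$, so the sets over which the minimum in \cref{def:pi_i} is taken coincide for $A$ and $B$. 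Hence $\pi_i(B)=\pi_i(A)$.
\end{itemize}
So the real content is showing that $B$ is an ASM.

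To show $B\in\asm(n)$, I will use the standard characterization of ASM corner sum matrices (as in \cite{LS96} or \cite{MRR83}). A function $r$ is $\rk_B$ for some ASM $B$ exactly when $r(a,0)=r(0,b)=0$, $r(n,b)=b$, $r(a,n)=a$, and all horizontal and vertical consecutive differences lie in $\{0,1\}$. The boundary conditions are unaffected because $i\le n-1$ and $\ell-1\le n-1$. Alternatively, one could check conditions (1)–(3) on the entries of $B$ directly; the difference check is the same bookkeeping. Since only row $i$ changes, on the columns $b\in[j,\ell-1]$, I will check the following five differences.
\begin{enumerate}
\item $\rk_B(i,b)-\rk_B(i,b-1)$ for $j<b\le \ell-1$ is unchanged.
\item At $b=j$ we need $\rk_A(i,j)=\rk_A(i,j-1)$. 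This holds because $(i,j)\in\inv(A)$ forces the column partial sum $\sum_{a\le i}A_{a,j}=0$.
\item At $b=\ell$ we need $\rk_A(i,\ell)=\rk_A(i,\ell-1)+1$. This is part of the definition of $(i,\ell-1)\in\ess(A)$.
\item Vertically, $\rk_B(i,b)-\rk_B(i-1,b)=\rk_A(i,b)-\rk_A(i-1,b)+1$ is fine provided $\rk_A(i,b)=\rk_A(i-1,b)$.
\item We need $\rk_A(i+1,b)=\rk_A(i,b)+1$, so that $\rk_B(i+1,b)-\rk_B(i,b)=0$.
\end{enumerate}

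Conditions (4) and (5) are where the choice of $j$ enters. I first observe that every $b$ with $j\le b\le \ell-1$ satisfies $(i,b)\in F_i(A)$. This holds because $(i,\ell-1)\in\ess(A)\subseteq F_i(A)$ and $j$ is, by maximality, the left end of the run of consecutive cells of $F_i(A)$ in row $i$ containing $(i,\ell-1)$.

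For such $b$, since $(i,b)\in\inv(A)$, the row partial sum $\sum_{k\le b}A_{i,k}$ is $0$, which is exactly $\rk_A(i,b)=\rk_A(i-1,b)$. Since $(i+1,b)\notin\inv(A)$, either the row-$(i+1)$ partial sum through $b$ is $1$, or the column-$b$ partial sum through $i+1$ is $1$. In the latter case, the column partial sum through $i$ is $0$, so $A_{i+1,b}=1$. The alternating property of row $i+1$ then forces the row partial sum through $b-1$ to be $0$, so the partial sum through $b$ is $1$. Either way, $\rk_A(i+1,b)-\rk_A(i,b)=1$, as needed.

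The main obstacle I anticipate is precisely this last verification, in particular confirming that the block $[j,\ell-1]$ lies entirely in $F_i(A)$ and that the "not an inversion" condition converts cleanly into the needed rank jump between rows $i$ and $i+1$. The remaining checks are routine.
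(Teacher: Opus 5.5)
Your proof is correct, and it takes a different route from the paper's.

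\textbf{The paper's route.} The paper inducts on $\ell-j$. In the base case $j=\ell-1$, the toggle is exactly the one of \cite[Lemma 2.17]{EKW-main} at the essential cell $(i,\ell-1)$, so it is a strong-order cover by \cite[Lemma 2.19]{EKW-main}. In the inductive step, the paper performs that single toggle to get an intermediate $B'$ covered by $A$ with $\pi_i(B')=\pi_i(A)$. It then checks that $(i,\ell-2)\in\ess(B')$ with the same $j$, and recognizes $B$ as the output of the construction applied to $B'$ with a shorter block.

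\textbf{Your route.} You instead compute in closed form that $\rk_B$ agrees with $\rk_A$ except that $\rk_B(i,b)=\rk_A(i,b)+1$ for $j\le b\le \ell-1$. You then verify the ASM property once, using the elementary corner-sum characterization. All five difference checks are sound, including the argument in (5) that $(i,b)\in\inv(A)$ and $(i+1,b)\notin\inv(A)$ force the row-$(i+1)$ partial sum through $b$ to equal $1$. Both conclusions then follow immediately from the rank formula and \cref{def:pi_i}.

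\textbf{What each approach buys.} The paper's induction produces an explicit saturated chain of strong covers from $A$ down to $B$. Each cover is at an essential cell in row $i$, and each stays in the fiber $\pi_i^{-1}(\pi_i(A))$; this chain is what the subsequent remark about breaking and undrooping pipes depicts. Your argument is self-contained, with no appeal to the two lemmas from \cite{EKW-main}. It also gives a slightly stronger statement: you only use that $(i,\ell-1)$ is essential and that $[j,\ell-1]$ lies in a run of $F_i(A)$, never that $(i,j-1)\notin F_i(A)$. So the same conclusion holds with $j$ replaced by any $j'\in[j,\ell-1]$, and these matrices are precisely the intermediate ASMs on the paper's chain.
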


\begin{example}
Before proving \cref{prop:toggle-one-block-in-weak-order}, we give an example of its simultaneous implementation at the essential cells $(3,1)$, $(3,4)$, and $(3,7)$ of $A$ (that is, with these cells each playing the role of $(i,\ell-1)$).  Note that $B \neq \pi_3(A)$ but that its only essential cell in row $i=3$ occurs at $(3,2)$ where $B_{3,2} = -1$.  The cells $(a,b)$ such that $B_{a,b} = A_{a,b}+1$ are shaded in green, and those satisfying $B_{a,b} = A_{a,b}-1$ are shaded in pink. One more iteration of \cref{prop:toggle-one-block-in-weak-order} will yield $\pi_3(A)$.
    \[
\raisebox{2.35cm}{$A$: }\begin{tikzpicture}[x=1.5em,y=1.5em]
\filldraw[color=black, fill=green](0,5)rectangle(1,6);
\filldraw[color=black, fill=green](1,4)rectangle(2,5);
\filldraw[color=black, fill=green](2,5)rectangle(3,6);
\filldraw[color=black, fill=green](4,4)rectangle(5,5);
\filldraw[color=black, fill=green](6,5)rectangle(7,6);
\filldraw[color=black, fill=green](7,4)rectangle(8,5);
\filldraw[color=black, fill=pink](0,4)rectangle(1,5);
\filldraw[color=black, fill=pink](1,5)rectangle(2,6);
\filldraw[color=black, fill=pink](2,4)rectangle(3,5);
\filldraw[color=black, fill=pink](4,5)rectangle(5,6);
\filldraw[color=black, fill=pink](6,4)rectangle(7,5);
\filldraw[color=black, fill=pink](7,5)rectangle(8,6);
\draw[step=1,gray, thin] (0,0) grid (8,8);
\draw[color=black, thick](0,0)rectangle(8,8);
\node at (0.5,4.5){$1$};
\node at (1.5,7.5){$1$};
\node at (2.5,1.5){$1$};
\node at (3.5,0.5){$1$};
\node at (4.5,3.5){$1$};
\node at (4.5,4.5){$-1$};
\node at (4.5,6.5){$1$};
\node at (5.5,2.5){$1$};
\node at (6.5,4.5){$1$};
\node at (7.5,5.5){$1$};
\end{tikzpicture} \hspace{3cm} 
\raisebox{2.35cm}{$B$: } \begin{tikzpicture}[x=1.5em,y=1.5em]
\filldraw[color=black, fill=green](0,5)rectangle(1,6);
\filldraw[color=black, fill=green](1,4)rectangle(2,5);
\filldraw[color=black, fill=green](2,5)rectangle(3,6);
\filldraw[color=black, fill=green](4,4)rectangle(5,5);
\filldraw[color=black, fill=green](6,5)rectangle(7,6);
\filldraw[color=black, fill=green](7,4)rectangle(8,5);
\filldraw[color=black, fill=pink](0,4)rectangle(1,5);
\filldraw[color=black, fill=pink](1,5)rectangle(2,6);
\filldraw[color=black, fill=pink](2,4)rectangle(3,5);
\filldraw[color=black, fill=pink](4,5)rectangle(5,6);
\filldraw[color=black, fill=pink](6,4)rectangle(7,5);
\filldraw[color=black, fill=pink](7,5)rectangle(8,6);
\draw[step=1,gray, thin] (0,0) grid (8,8);
\draw[color=black, thick](0,0)rectangle(8,8);
\node at (0.5,5.5){$1$};
\node at (1.5,7.5){$1$};
\node at (1.5,4.5){$1$};
\node at (1.5,5.5){$-1$};
\node at (2.5,1.5){$1$};
\node at (2.5,5.5){$1$};
\node at (2.5,4.5){$-1$};
\node at (3.5,0.5){$1$};
\node at (4.5,3.5){$1$};
\node at (4.5,5.5){$-1$};
\node at (4.5,6.5){$1$};
\node at (5.5,2.5){$1$};
\node at (6.5,5.5){$1$};
\node at (7.5,4.5){$1$};
\end{tikzpicture} 
\]   

For the reader who prefers a diagrammatic interpretation, and in anticipation of \cref{subsection:bpdweakorder}, we record the same information via Rothe diagrams.

\[
\raisebox{2.35cm}{$A$: }\begin{tikzpicture}[x=1.5em,y=1.5em]
\filldraw[color=black, fill=green](0,5)rectangle(1,6);
\filldraw[color=black, fill=green](1,4)rectangle(2,5);
\filldraw[color=black, fill=green](2,5)rectangle(3,6);
\filldraw[color=black, fill=green](4,4)rectangle(5,5);
\filldraw[color=black, fill=green](6,5)rectangle(7,6);
\filldraw[color=black, fill=green](7,4)rectangle(8,5);
\filldraw[color=black, fill=pink](0,4)rectangle(1,5);
\filldraw[color=black, fill=pink](1,5)rectangle(2,6);
\filldraw[color=black, fill=pink](2,4)rectangle(3,5);
\filldraw[color=black, fill=pink](4,5)rectangle(5,6);
\filldraw[color=black, fill=pink](6,4)rectangle(7,5);
\filldraw[color=black, fill=pink](7,5)rectangle(8,6);
\draw[step=1,gray, thin] (0,0) grid (8,8);
\draw[color=black, thick](0,0)rectangle(8,8);
\filldraw [black](0.5,4.5)circle(.1);
\filldraw [black](1.5,7.5)circle(.1);
\filldraw [black](2.5,1.5)circle(.1);
\filldraw [black](3.5,0.5)circle(.1);
\filldraw [black](4.5,3.5)circle(.1);
\draw [black](4.5,4.5)circle(.1);
\filldraw [black](4.5,6.5)circle(.1);
\filldraw [black](5.5,2.5)circle(.1);
\filldraw [black](6.5,4.5)circle(.1);
\filldraw [black](7.5,5.5)circle(.1);
\draw[thick, color=blue] (.5,0)--(.5,4.5)--(4.4,4.5);
\draw[thick, color=blue] (4.5,4.6)--(4.5,6.5)--(8,6.5);
\draw[thick, color=blue] (1.5,0)--(1.5,7.5)--(8,7.5);
\draw[thick, color=blue] (2.5,0)--(2.5,1.5)--(8,1.5);
\draw[thick, color=blue] (3.5,0)--(3.5,0.5)--(8,0.5);
\draw[thick, color=blue] (4.5,0)--(4.5,3.5)--(8,3.5);
\draw[thick, color=blue] (5.5,0)--(5.5,2.5)--(8,2.5);
\draw[thick, color=blue] (6.5,0)--(6.5,4.5)--(8,4.5);
\draw[thick, color=blue] (7.5,0)--(7.5,5.5)--(8,5.5);
\end{tikzpicture} \hspace{3cm} 
\raisebox{2.35cm}{$B$: } \begin{tikzpicture}[x=1.5em,y=1.5em]
\filldraw[color=black, fill=green](0,5)rectangle(1,6);
\filldraw[color=black, fill=green](1,4)rectangle(2,5);
\filldraw[color=black, fill=green](2,5)rectangle(3,6);
\filldraw[color=black, fill=green](4,4)rectangle(5,5);
\filldraw[color=black, fill=green](6,5)rectangle(7,6);
\filldraw[color=black, fill=green](7,4)rectangle(8,5);
\filldraw[color=black, fill=pink](0,4)rectangle(1,5);
\filldraw[color=black, fill=pink](1,5)rectangle(2,6);
\filldraw[color=black, fill=pink](2,4)rectangle(3,5);
\filldraw[color=black, fill=pink](4,5)rectangle(5,6);
\filldraw[color=black, fill=pink](6,4)rectangle(7,5);
\filldraw[color=black, fill=pink](7,5)rectangle(8,6);
\draw[step=1,gray, thin] (0,0) grid (8,8);
\draw[color=black, thick](0,0)rectangle(8,8);
\filldraw [black](0.5,5.5)circle(.1);
\filldraw [black](1.5,4.5)circle(.1);
\draw [black](1.5,5.5)circle(.1);
\filldraw [black](1.5,7.5)circle(.1);
\filldraw [black](2.5,1.5)circle(.1);
\filldraw [black](2.5,5.5)circle(.1);
\draw [black](2.5,4.5)circle(.1);
\filldraw [black](3.5,0.5)circle(.1);
\filldraw [black](4.5,6.5)circle(.1);
\draw [black](4.5,5.5)circle(.1);
\filldraw [black](4.5,3.5)circle(.1);
\filldraw [black](5.5,2.5)circle(.1);
\filldraw [black](6.5,5.5)circle(.1);
\filldraw [black](7.5,4.5)circle(.1);
\draw[thick, color=blue] (.5,0)--(.5,5.5)--(1.4,5.5);
\draw[thick, color=blue](1.5,5.6)--(1.5,7.5)--(8,7.5);
\draw[thick, color=blue] (1.5,0)--(1.5,4.5)--(2.4,4.5);
\draw[thick, color=blue] (2.5,4.6)--(2.5,5.5)--(4.4,5.5);
\draw[thick, color=blue](4.5,5.6)--(4.5,6.5)--(8,6.5);
\draw[thick, color=blue] (2.5,0)--(2.5,1.5)--(8,1.5);
\draw[thick, color=blue] (3.5,0)--(3.5,0.5)--(8,0.5);
\draw[thick, color=blue] (4.5,0)--(4.5,3.5)--(8,3.5);
\draw[thick, color=blue] (5.5,0)--(5.5,2.5)--(8,2.5);
\draw[thick, color=blue] (6.5,0)--(6.5,5.5)--(8,5.5);
\draw[thick, color=blue] (7.5,0)--(7.5,4.5)--(8,4.5);
\end{tikzpicture} \qedhere
\] 
\end{example}

   \begin{proof}
      We proceed by induction on $\ell-j$.  If $\ell-j = 1$, then $(i,j) = (i,\ell-1) \in \ess(A)$ and so $A$ covers $B$ in strong order by \cref{lem:cover-by-add-to-ess-cell,lemma:new-ASM-from-increasing-at-essential-cell}.  Now $\rk_A(a,b) = \rk_B(a,b)$ for all $a \neq i$, and so $\pi_i(A) = \pi_i(B)$.

          Now with $\ell-j>1$ arbitrary, consider the matrix $B'$ defined by \[
    B'_{a,b} = \begin{cases}
        A_{a,b} \text{ if } (a,b) \notin \{(i,\ell-1), (i+1,\ell-1), (i, \ell), (i+1,\ell)\}\\
        A_{a,b}+1 \text{ if } (a,b) \in \{(i,\ell-1), (i+1,\ell)\}\\
        A_{a,b}-1 \text{ if } (a,b) \in \{(i+1,\ell-1), (i,\ell)\}.
    \end{cases} 
    \]      (For an example of the construction of $B'$ viewed via the Rothe diagram, see \cref{ex:B'-construction}.)  Arguing as above, $B'$ covers $A$ in strong order and $\pi_i(B') = \pi_i(A)$. Our goal is now to apply the inductive hypothesis to $B'$ to show that $\pi_i(B') = \pi_i(B)$ and that $B < B'$. 
    
    Because the matrices $A$ and $B'$ agree weakly northwest of $(i+1,\ell-2)$ and because $(i,\ell-2) \in F_i(A)$, we have $(i,\ell-2) \in F_i(B')$. By construction $(i,\ell-1) \notin \inv(B')$, and so $(i,\ell-2) \in \ess(B')$. Again because the matrices $A$ and $B'$ agree weakly northwest of $(i+1,\ell-2)$ and by the assumption $\ell-j>1$, $j$ is the greatest integer such that $j < \ell-1$, $(i,j) \in F_i(A)$, and $(i,j-1) \notin F_i(A)$.  Obviously $(\ell-1)-j<\ell-j$.  We see directly from the definitions of $B$ and $B'$ that $B_{a,b} = B'_{a,b}$ if $(a,b) \notin \{(i,j),(i+1,j), (i,\ell-1), (i+1,\ell-1)\}$ and that 
    \begin{alignat*}{3}
    B_{i,j} &= A_{i,j}+1 &&= B'_{i,j}+1, \\
    B_{i+1,j} & = A_{i+1,j}-1 &&= B'_{i+1,j}-1,\\
    B_{i,\ell-1} &= A_{i,\ell-1}&& = B'_{i,\ell-1}-1, \text{ and } \\
    B_{i+1,\ell-1}& = A_{i+1,\ell-1}&&= B'_{i+1,\ell-1}+1.
    \end{alignat*}
That is, $B$ is obtained from $B'$ via the construction in the proposition statement for the essential cell $(i, \ell-2) \in \ess(B')$. Thus, by induction, $\pi_i(B') = \pi_i(B)$ and $B<B'$. 
   \end{proof}

   \begin{example}\label{ex:B'-construction}
       In order to illustrate the constructions within the proof of \cref{prop:toggle-one-block-in-weak-order}, we give the Rothe diagrams of $A$, $B'$, and $B$, respectively, corresponding to the choices $i = 3$, $j = 2$, and $\ell = 4$.  The elements of $F_3(B')$ and of $F_3(B)$ appear in yellow.  In light of \Cref{lemma:descentessential}, these boxes in yellow encode the work that remains to be done before arriving at $\pi_3(A)$.  

       \[
       \begin{centering}
\begin{tikzpicture}[x=1.4em,y=1.4em]
\draw[step=1,gray, thin] (0,0) grid (7,7);
\draw[color=black, thick](0,0)rectangle(7,7);
\filldraw [black](0.5,2.5)circle(.1);
\filldraw [black](1.5,3.5)circle(.1);
\filldraw [black](2.5,0.5)circle(.1);
\filldraw [black](3.5,4.5)circle(.1);
\filldraw [black](4.5,5.5)circle(.1);
\draw [black](4.5,4.5)circle(.1);
\filldraw [black](4.5,1.5)circle(.1);
\filldraw [black](5.5,6.5)circle(.1);
\filldraw [black](6.5,4.5)circle(.1);

\draw[thick, color=blue] (.5,0)--(.5,2.5)--(7,2.5);
\draw[thick, color=blue] (1.5,0)--(1.5,3.5)--(7,3.5);
\draw[thick, color=blue] (2.5,0)--(2.5,.5)--(7,.5);
\draw[thick, color=blue] (3.5,0)--(3.5,4.5)--(4.5,4.5);
\draw[thick, color=blue] (4.5,0)--(4.5,1.5)--(7,1.5);
\draw[thick, color=blue] (4.5,4.6)--(4.5,5.5)--(7,5.5);
\draw[thick, color=blue] (5.5,0)--(5.5,6.5)--(7,6.5);
\draw[thick, color=blue] (6.5,0)--(6.5,4.5)--(7,4.5);

\node at (-1,4.5){$i$};
\node at (-1,3.5){$i+1$};
\node at (1.5,7.5){$j$};
\node at (3.5,7.5){$\ell$};
\node at (3.5,8.5){$A:$};
\end{tikzpicture} \hspace{.2cm}
\begin{tikzpicture}[x=1.4em,y=1.4em]
\filldraw[color=black, fill=yellow](1,5)rectangle(2,4);
\filldraw[color=black, fill=yellow](4,5)rectangle(5,4);
\draw[step=1,gray, thin] (0,0) grid (7,7);
\draw[color=black, thick](0,0)rectangle(7,7);
\filldraw [black](0.5,2.5)circle(.1);
\filldraw [black](1.5,3.5)circle(.1);
\filldraw [black](2.5,0.5)circle(.1);
\filldraw [black](2.5,4.5)circle(.1);
\draw [black](2.5,3.5)circle(.1);
\filldraw [black](3.5,3.5)circle(.1);
\filldraw [black](4.5,5.5)circle(.1);
\draw [black](4.5,4.5)circle(.1);
\filldraw [black](4.5,1.5)circle(.1);
\filldraw [black](5.5,6.5)circle(.1);
\filldraw [black](6.5,4.5)circle(.1);

\draw[thick, color=blue] (.5,0)--(.5,2.5)--(7,2.5);
\draw[thick, color=blue] (1.5,0)--(1.5,3.5)--(2.4,3.5);
\draw[thick, color=blue] (2.5,0)--(2.5,.5)--(7,.5);
\draw[thick, color=blue] (2.5,3.6)--(2.5,4.5)--(4.4,4.5);
\draw[thick, color=blue] (3.5,0)--(3.5,3.5)--(7,3.5);
\draw[thick, color=blue] (4.5,0)--(4.5,1.5)--(7,1.5);
\draw[thick, color=blue] (4.5,4.6)--(4.5,5.5)--(7,5.5);
\draw[thick, color=blue] (5.5,0)--(5.5,6.5)--(7,6.5);
\draw[thick, color=blue] (6.5,0)--(6.5,4.5)--(7,4.5);

\node at (-1,4.5){$i$};
\node at (-1,3.5){$i+1$};
\node at (1.5,7.5){$j$};
\node at (3.5,7.5){$\ell$};
\node at (3.5,8.5){$B':$};
\end{tikzpicture} \hspace{.2cm} 
\begin{tikzpicture}[x=1.4em,y=1.4em]
\filldraw[color=black, fill=yellow](4,5)rectangle(5,4);
\draw[step=1,gray, thin] (0,0) grid (7,7);
\draw[color=black, thick](0,0)rectangle(7,7);
\filldraw [black](0.5,2.5)circle(.1);
\filldraw [black](1.5,4.5)circle(.1);
\filldraw [black](2.5,0.5)circle(.1);
\filldraw [black](3.5,3.5)circle(.1);
\filldraw [black](4.5,5.5)circle(.1);
\draw [black](4.5,4.5)circle(.1);
\filldraw [black](4.5,1.5)circle(.1);
\filldraw [black](5.5,6.5)circle(.1);
\filldraw [black](6.5,4.5)circle(.1);

\draw[thick, color=blue] (.5,0)--(.5,2.5)--(7,2.5);
\draw[thick, color=blue] (1.5,0)--(1.5,4.5)--(4.4,4.5);
\draw[thick, color=blue] (2.5,0)--(2.5,.5)--(7,.5);
\draw[thick, color=blue] (3.5,0)--(3.5,3.5)--(7,3.5);
\draw[thick, color=blue] (4.5,0)--(4.5,1.5)--(7,1.5);
\draw[thick, color=blue] (4.5,4.6)--(4.5,5.5)--(7,5.5);
\draw[thick, color=blue] (5.5,0)--(5.5,6.5)--(7,6.5);
\draw[thick, color=blue] (6.5,0)--(6.5,4.5)--(7,4.5);

\node at (-1,4.5){$i$};
\node at (-1,3.5){$i+1$};
\node at (1.5,7.5){$j$};
\node at (3.5,7.5){$\ell$};
\node at (3.5,8.5){$B:$};
\end{tikzpicture} 
\end{centering}\qedhere
\]
   \end{example}

\begin{remark}
Anticipating the discussion of pipe dream combinatorics in \cref{subsection:bpdweakorder}, or for the reader already familiar with pipe dream combinatorics, we remark that the entry-wise constructions of \Cref{prop:toggle-one-block-in-weak-order} may be replaced by a description in terms of breaking and bending pipes.  For example, one may obtain $B'$ from $A$ by replacing the crossing tile in position $(4,4)$ by a bumping tile and then performing an undroop on the upward-facing elbow within the bumping tile. Restricting to rows $3$ and $4$ and columns $2$, $3$, and $4$, this looks like \[
\raisebox{1.1em}{$A_{[3,4],[2,4]}:$ }
	\begin{tikzpicture}[x=1.25em,y=1.25em]
	\draw[step=1,gray, thin] (0,0) grid (3,2);
	\draw[color=black, thick](0,0)rectangle(3,2);
 \draw[thick,color=blue] (0,.5)--(3,.5);
  \draw[thick,color=blue] (2.5,0)--(2.5,1);
      \draw[thick,rounded corners,color=blue] (2.5,1)--(2.5,1.5)--(3,1.5);
	\end{tikzpicture}
    \hspace{.5cm}
	\raisebox{1.1em}{$\rightarrow$}
	\hspace{.5cm}
	\begin{tikzpicture}[x=1.25em,y=1.25em]
	\draw[step=1,gray, thin] (0,0) grid (3,2);
	\draw[color=black, thick](0,0)rectangle(3,2);
 \draw[thick,color=blue] (0,.5)--(2,.5);
   \draw[thick,rounded corners,color=blue] (2.5,1)--(2.5,1.5)--(3,1.5);
   \draw[thick,rounded corners,color=blue] (2,.5)--(2.5,.5)--(2.5,1);
    \draw[thick,rounded corners,color=blue] (2.5,0)--(2.5,.5)--(3,0.5);
	\end{tikzpicture}
    \hspace{.5cm}
	\raisebox{1.1em}{$\rightarrow$}
	\hspace{.5cm}
	\begin{tikzpicture}[x=1.25em,y=1.25em]
	\draw[step=1,gray, thin] (0,0) grid (3,2);
	\draw[color=black, thick](0,0)rectangle(3,2);
 \draw[thick,color=blue] (0,.5)--(1,.5);
   \draw[thick,color=blue] (2,1.5)--(3,1.5);
      \draw[thick,rounded corners,color=blue] (1.5,1)--(1.5,0.5)--(1,0.5);
    \draw[thick,rounded corners,color=blue] (1.5,1)--(1.5,1.5)--(2,1.5);
        \draw[thick,rounded corners,color=blue] (2.5,0)--(2.5,.5)--(3,0.5);
	\end{tikzpicture}
    \raisebox{1.1em}{ $:B'_{[3,4],[2,4]}$}
\]
\end{remark}

\begin{remark}
\label{rmk:Daoji's Question}
To each $A \in \asm(n)$, one may associate a permutation $w_A$, called the \emph{key} of $A$ in \cite{Las-Ice}.  There is an equivalent characterization via bumpless pipe dreams (see \cite{Wei21}). If $A \in S_n$, then $w_A = A$. 

We thank Daoji Huang for asking if $w_{\pi_i(A)} = \pi_i(w_A)$. If $A$ is the non-permutation element of $\asm(3)$ and $i = 2$, then $\pi_2(A) = 213$, so $w_{\pi_2(A)} = 213$. Meanwhile, $w_A = 132$, so $\pi_2(w_A) = 123 \neq 213$. This example shows that the answer is in general negative.
\end{remark}

\subsection{Computing $\pi_i(A)$ via $m_{A^T}$}
\label{sect:computing-pi_i(A)-with-column-weak-order}

Given $A\in \asm(n)$ and $i \in [n-1]$, let $\pi_i^C(A)=\pi_i(A^T)^T$. We will define column weak order from the operators $\pi^C_i$ just as weak order is defined from the $\pi_i$:  We say $A$ has a \newword{descent in column $i$} if $\pi_i^C(A)\neq A$ and an \newword{ascent in column $i$} otherwise. Define the \newword{column weak order} $\preceq_C$ to be the transitive closure of the covering relations $\pi_i^C(A)\prec_C A$, where $A$ has a descent in column $i$. For $w \in S_n$, $\pi_i^C(w) = s_iw$ if $s_iw<w$ and $\pi_i^C(w) = w$ otherwise, i.e., column weak order on $\asm(n)$ extends left weak order on $S_n$. For $w \in S_n$, we will use left ascent or left descent to mean an ascent or descent with respect to $\pi_i^C(w)$. The fundamental facts about weak order we have proved up to this point translate naturally to column weak order. 

In this section, we give a new way to compute column weak order covers on ASMs directly in terms of monotone triangles.  Column weak order admits a simple description in terms of monotone triangles for the transposes of the corresponding ASMs.  Later, in \cref{subsection:bpdweakorder}, we will apply this description of weak order in order to interpret weak order covers in terms of bumpless pipe dreams (\cref{def:bpd}).

\begin{lemma}
\label{lemma:monotone_transpose_weak_order_permutations}
    Let $w\in S_n$ and $i\in[n-1]$.  Suppose we obtain the array $m$ from $m_w$ as follows: Whenever $i+1$ appears in a row of which $i$ is not an entry, replace $i+1$ with $i$.  Then $m=m_{\pi_i^C(w)}$.
\end{lemma}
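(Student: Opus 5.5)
We will prove this by direct computation with the key $m_w$. For $w\in S_n$, row $k$ of $\widetilde{w}$ is the indicator vector of $\{j : w_{a,j}=1 \text{ for some } a\le k\}$. With the paper's convention, this is the set of column indices of the $1$'s in the first $k$ rows of $w$, namely the set $\{w^{-1}(1),\ldots,w^{-1}(k)\}$. So row $k$ of $m_w$ is the sorted list $R_k$ of the values $w^{-1}(1),\dots,w^{-1}(k)$. First we record how $\pi_i^C$ acts on permutation matrices. Transposing $w$ gives the matrix of $w^{-1}$. By the permutation case established in the proof of \cref{weak_order_isomorphism}, $\pi_i(w^{-1})$ equals $w^{-1}s_i$ when $i$ is a descent and equals $w^{-1}$ otherwise. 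Hence $\pi_i^C(w)$ is $s_iw$ if $s_iw<w$ and is $w$ otherwise, as the paper states. In matrix terms, passing to $s_iw$ swaps columns $i$ and $i+1$ of $w$ whenever the $1$ in column $i+1$ lies in an earlier row than the $1$ in column $i$.

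Set $p=w(i)$ and $q=w(i+1)$; these are the rows of the $1$'s in columns $i$ and $i+1$. The next step is a case analysis on the relative order of $p$ and $q$.

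\emph{Case $p<q$.} Then $i$ is a left ascent, so $\pi_i^C(w)=w$. In every row $k$ of $m_w$, the column $i+1$ appears only if $q\le k$, and then $p<k$ as well, so $i$ also appears. Thus no replacement occurs and $m=m_w$, as required.

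\emph{Case $q<p$.} Then $\pi_i^C(w)=s_iw$, whose matrix is $w$ with columns $i$ and $i+1$ swapped. The first-$k$-rows column set of $s_iw$ is obtained from $R_k$ by applying the transposition $(i\;i+1)$ to its elements. We split by the value of $k$:
\begin{itemize}
\item For $k<q$, neither $i$ nor $i+1$ lies in $R_k$, so nothing changes.
\item For $k\ge p$, both $i$ and $i+1$ lie in $R_k$, so the set is unchanged by the transposition, and no replacement occurs in $m_w$.
\item For $q\le k<p$, exactly $i+1$ lies in $R_k$ and $i$ does not. The transposition replaces $i+1$ by $i$, which is precisely the rule in the statement.
\end{itemize}
In the last situation the sorted order of the row is preserved, because $i$ is not in $R_k$ and so no entry lies strictly between $i$ and $i+1$. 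Hence the rows of $m$ agree with those of $m_{s_iw}$.

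The only delicate point is keeping the paper's convention straight, namely that the permutation sends $j$ to $i_j$ and that rows of $m_w$ record columns. This tells us that left multiplication by $s_i$ acts on the columns of the matrix, and hence on the entries of $m_w$, rather than on its rows. Once this identification is checked against \cref{ex: asm to monotone} style computations, the case analysis above is routine.
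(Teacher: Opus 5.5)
Your proof is correct and is essentially the paper's argument. Both proofs split on whether the $1$ in column $i$ of $w$ lies above or below the $1$ in column $i+1$, and in the descent case observe that the rows of the key $m_w$ containing $i+1$ but not $i$ are exactly those between these two row indices, where swapping columns $i,i+1$ amounts to replacing $i+1$ by $i$. One small correction: under the paper's convention $w(j)$ is the column of the $1$ in row $j$, so row $k$ of $m_w$ is $\{w(1),\dots,w(k)\}$ and your $p,q$ should be $w^{-1}(i),w^{-1}(i+1)$ (the paper's $a,b$) rather than $w(i),w(i+1)$; since your case analysis actually uses the matrix-level descriptions, which are right, the argument is unaffected.
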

\begin{proof}

Let $a,b\in [n]$ such that $w(a)=i$ and $w(b)=i+1$.
By the definition of $m_w$, we have that the first row in which $i$ appears in $m_w$ is row $a$.  Because $m_w$ is a key, $i$ appears in every row thereafter.  Similarly, the first row in which $i+1$ appears is row $b$, and $i+1$ remains thereafter.

\noindent{ \bf Case 1:} $a<b$. In this case, $w$ has a left ascent at $i$ and so $\pi_i^C(w)=w$.  Because $a<b$, any row in $m_w$ that contains $i+1$ will also contain $i$, and so $m=m_w$, as desired.

\noindent{ \bf Case 2:} $a>b$.
    In this case, $s_i w<w$.  Because $w\in S_n$, $\pi_i^C(w)=s_i w$.
Because $a>b$, the only rows in $m_w$ in which $i+1$ appears without $i$ are rows $b, b+1,\ldots, a-1$.  To obtain $m$, we change each of these $i+1$'s to $i$'s.  Because $s_i w$ only differs from $w$ by exchanging the values $i$ and $i+1$, we have that $m=m_{s_iw}$, as desired.
\end{proof}

\begin{proposition}
\label{prop:monotone_transpose_weak_order}
Let $A\in \asm(n)$ and $i\in[n-1]$.  Suppose we obtain the array $m$ from $m_A$ as follows: Whenever $i+1$ appears in a row without an $i$, replace $i+1$ with $i$.  Then $m=m_{\pi_i^C(A)}$.
\end{proposition}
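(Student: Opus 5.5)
The plan is to reduce to the permutation case, \cref{lemma:monotone_transpose_weak_order_permutations}, by writing $A$ as a join of permutations, exactly as in the proof of \cref{weak_order_isomorphism}. Let $\tau_i$ denote the operation on arrays described in the statement: in each row containing $i+1$ but not $i$, replace $i+1$ by $i$. Write $A = w_1 \vee \cdots \vee w_k$ with $w_j \in S_n$ using \cite[Theorem 4.4]{LS96}. By \cref{lemma:monotone_and_rank_induce_same_poset} and \cref{jointmeetMT}, $m_A$ is the entrywise maximum of the $m_{w_j}$.

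There are two ingredients. The first is that $\pi_i^C$ distributes over joins. Transposition preserves corner sums ($\rk_{A^T}(a,b)=\rk_A(b,a)$), so $A\mapsto A^T$ is a strong order automorphism of $\asm(n)$ and preserves joins. Hence \cite[Proposition 3.4]{EKW-main} gives
\[
\pi_i^C(A)=\pi_i(w_1^T\vee\cdots\vee w_k^T)^T=\pi_i^C(w_1)\vee\cdots\vee\pi_i^C(w_k).
\]
Combining this with \cref{lemma:monotone_transpose_weak_order_permutations}, we get $m_{\pi_i^C(A)}=\max_j \tau_i(m_{w_j})$ entrywise.

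The second ingredient, and the main obstacle, is to show that $\tau_i$ commutes with entrywise maxima of monotone triangles, i.e. $\tau_i(a\vee b)=\tau_i(a)\vee\tau_i(b)$. By induction this would give $\tau_i(m_A)=\max_j\tau_i(m_{w_j})=m_{\pi_i^C(A)}$. Entrywise maxima on a row need not behave well under a local value change, so I would argue through ranks instead. By \cref{eq:rank-from-triangle}, the row $r$ of a monotone triangle is encoded by the counting function $c_r(t)=\#\{\ell: m(r,\ell)\le t\}$. The effect of $\tau_i$ on this function is to change only $c_r(i)$: if $i$ is absent from row $r$ and $i+1$ is present, then $c_r(i)$ increases by $1$; in every other case nothing changes. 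Equivalently, $c_r(i)$ is replaced by $\min(c_r(i-1)+1,\,c_r(i+1))$, since $c_r(i+1)-c_r(i-1)\in\{0,1,2\}$ records how many of $i,i+1$ lie in the row. A maximum of triangles corresponds to a minimum of counting functions, namely $\rk_{B\vee C}=\min(\rk_B,\rk_C)$. The claim therefore reduces to the identity
\[
\min\bigl(\min(x_{i-1},y_{i-1})+1,\ \min(x_{i+1},y_{i+1})\bigr)=\min\bigl(\min(x_{i-1}+1,x_{i+1}),\ \min(y_{i-1}+1,y_{i+1})\bigr),
\]
which holds because $\min$ is associative and commutative. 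Values of $c_r$ at arguments other than $i$ are unchanged and are minima on both sides.

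One check remains: the statement implicitly asserts that $\tau_i(m_A)$ is the triangle of some ASM. This comes for free, because the computation above identifies $\tau_i(m_A)$ with the join $\max_j\tau_i(m_{w_j})$, and each $\tau_i(m_{w_j})$ is a monotone triangle by \cref{lemma:monotone_transpose_weak_order_permutations}.

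So the order of steps is: the join decomposition of $A$; the distributivity of $\pi_i^C$ via transposition and \cite[Proposition 3.4]{EKW-main}; the permutation lemma; and finally the rank reformulation of $\tau_i$, which shows it commutes with joins. The last step is the one that needs care.
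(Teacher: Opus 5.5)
Your proof is correct. Its skeleton matches the paper's: write $A$ as a join of permutations, use that $\pi_i^C$ preserves joins, and invoke \cref{lemma:monotone_transpose_weak_order_permutations}. The difference is in the final step.

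The paper fixes an entry $(a,b)$ and does a case analysis on $\max_j m_{\pi_i^C(u_j)}(a,b)$. It asks whether $m_A(a,b)=i+1$ and, if so, whether some $u_j$ attaining the maximum has $i$ in position $(a,b-1)$. You instead prove once that $\tau_i$ commutes with entrywise maxima, by passing to the counting functions of \cref{eq:rank-from-triangle}. There $\tau_i$ becomes the substitution $c_r(i)\mapsto\min(c_r(i-1)+1,c_r(i+1))$, with the convention $c_r(0)=0$ when $i=1$, and compatibility with pointwise minima is automatic. This avoids tracking which $u_j$ realize the maximum and whether their rows contain $i$, which the paper has to do by hand in Case~2. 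Your transposition argument also justifies applying \cite[Proposition 3.4]{EKW-main} to $\pi_i^C$, which the paper only asserts.

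Your reformulation gives more than you use. It says $\tau_i(m_A)$ is the triangle whose rank function is $\rk_A$ with column $i$ replaced by $\min(\rk_A(\cdot,i-1)+1,\rk_A(\cdot,i+1))$. Any ASM whose rank function agrees with $\rk_A$ outside column $i$ is bounded above by this function in column $i$. One checks directly that the modified function still has row and column increments in $\{0,1\}$ and the right boundary values, so it is an ASM rank function. Hence it is the strong-order minimum in the transposed \cref{def:pi_i}, that is, $\rk_{\pi_i^C(A)}$. So the join decomposition and the permutation lemma could be bypassed entirely. The paper's route, by contrast, has the modest advantage of staying in the entrywise monotone-triangle language in which the statement is phrased.
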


\begin{proof}
We may write
\[
A = \bigvee_{j=1}^t u_j,
\]
where each $u_j \in S_n$. 

By \cite[Proposition 3.4]{EKW-main}, the operator $\pi_i^C$ preserves joins, and hence
\[
\pi_i^C(A) = \bigvee_{j=1}^t \pi_i^C(u_j).
\]
Consequently, for all $1 \le b \le a \le n$,
\[
m_A(a,b) = \max\{ m_{u_j}(a,b) : j \in [t] \},\]
and
\[m_{\pi_i^C(A)}(a,b)
= \max\{ m_{\pi_i^C(u_j)}(a,b) : j \in [t] \}.
\]

Fix $(a,b)$ with $1 \le b \le a \le n$.

\noindent\textbf{Case 1:} $m_A(a,b) = k \neq i+1$.

In this case, there exists some $h \in [t]$ such that $m_{u_h}(a,b) = k$, and for all $j \in [t]$ we have
$m_{u_j}(a,b) \le k$.
By \cref{lemma:monotone_transpose_weak_order_permutations}, since $k \neq i+1$, it follows that
\[
m_{\pi_i^C(u_h)}(a,b) = k
\quad\text{and}\quad
m_{\pi_i^C(u_j)}(a,b) \le k
\text{ for all } j \in [t].
\]
Therefore,
\[
m_{\pi_i^C(A)}(a,b)
= \max\{ m_{\pi_i^C(u_j)}(a,b) : j \in [t] \}
= k.
\]

Because $m$ is produced from $m_A$ by replacing only values $i+1$ and we assume that $k \neq i+1$, we have $m(a,b) = m_A(a,b) = k$. So $m(a,b) = m_{\pi_i^C(A)}(a,b)$, as desired.

\noindent\textbf{Case 2:} $m_A(a,b) = i+1$.

In this case, there exists some $h \in [t]$ such that $m_{u_h}(a,b) = i+1$, and for all $j \in [t]$ we have
$m_{u_j}(a,b) \le i+1$.

\noindent\textbf{Subcase 1:}
For all $j \in [t]$ such that $m_{u_j}(a,b) = i+1$, there is no $i$ in row $a$ of $m_{u_j}$.

In this subcase, by \cref{lemma:monotone_transpose_weak_order_permutations}, for each such $j$ we have
\[
m_{\pi_i^C(u_j)}(a,b) = i.
\]
If $m_{u_j}(a,b) \neq i+1$, then
\[
m_{\pi_i^C(u_j)}(a,b)
= m_{u_j}(a,b)
\le i.
\]
Therefore,
\[
m_{\pi_i^C(A)}(a,b)
= \max\{ m_{\pi_i^C(u_j)}(a,b) : j \in [t] \}
= i.
\]

Because there is no $i$ in row $a$ of any $m_{u_j}$, there is no $i$ in row $a$ of $m_A$. Hence, $m(a,b) = i$ also, by the construction of $m$.

\noindent\textbf{Subcase 2:}
There exists some $j \in [t]$ such that
$m_{u_j}(a,b) = i+1$ and $m_{u_j}(a,b-1) = i$.

In this subcase, $m_{\pi_i^C(u_j)}(a,b) = i+1$, which implies
$m_{\pi_i^C(A)}(a,b) \geq i+1$. And $m_A(a,b) = i+1$ implies $m_{\pi_i^C(A)}(a,b) \leq i+1$. By construction of $m$ and the assumption $m_A(a,b) = i+1$, we know $m(a,b) \in \{i, i+1\}$ and that $m(a,b) = i+1$ if and only if $m_A$ has an $i$ in row $a$. Because $m_{u_j}(a,b-1) = i$, we have $m_A(a,b-1) = i$ unless there is some $j' \in [t]$ with $m_{u_{j'}}(a,b-1) >i$. But monotone triangles strictly increase along rows, so $m_{u_{j'}}(a,b-1) >i$ implies $m_{u_{j'}}(a,b) >i+1$, which requires $m_A(a,b)>i+1$, a contradiction. Hence $m_{\pi_i^C(A)}(a,b) = i+1 = m(a,b)$.
\end{proof}

\begin{remark}
    We can readily use \cref{prop:monotone_transpose_weak_order} to compute $\pi_i(A)$ by applying the statement to the monotone triangle for $A^T$, thus obtaining the monotone triangle for $\pi_i^C(A^T)=(\pi_i(A))^T$. 
\end{remark}

\begin{example}
    Let $A=\begin{pmatrix}
0  &0 & 1 & 0\\
 0 & 1& -1 & 1\\
 1 & 0 & 0 & 0\\
 0 & 0 & 1 & 0
    \end{pmatrix}$.  Then \[m_{A^T}=
\begin{array}{cccc}
3\\
2&3\\
1&3&4\\
1&2&3&4
\end{array}.
\]
By applying \cref{prop:monotone_transpose_weak_order}, we see that 
\[m_{\pi_2^C(A^T)}=
\begin{array}{cccc}
2\\
2&3\\
1&2&4\\
1&2&3&4
\end{array}
\]
and thus
$\pi_2(A)=\begin{pmatrix}
0  &0 & 1 & 0\\
 1 & 0& 0 & 0\\
 0 & 1 & -1 & 1\\
 0 & 0 & 1 & 0
    \end{pmatrix}$.
\end{example}

\subsection{Computing $\pi_i(A)$ via bumpless pipe dreams}
\label{subsection:bpdweakorder}

Consider the six tiles pictured below.
\begin{equation*}
    \begin{tikzpicture}[x=1.25em,y=1.25em]
\draw[step=1,gray, thin] (0,0) grid (1, -1);
\draw[color=black, thick] (0,0) rectangle (1, -1);
\draw[thick,rounded corners,color=blue](1/2, -1)--(1/2, -1/2)--(1, -1/2);
\end{tikzpicture}
\hspace{1em}
\begin{tikzpicture}[x=1.25em,y=1.25em]
\draw[step=1,gray, thin] (0,0) grid (1, -1);
\draw[color=black, thick] (0,0) rectangle (1, -1);
\draw[thick,rounded corners,color=blue](1/2, 0)--(1/2, -1/2)--(0, -1/2);
\end{tikzpicture}
\hspace{1em}
\begin{tikzpicture}[x=1.25em,y=1.25em]
\draw[step=1,gray, thin] (0,0) grid (1, -1);
\draw[color=black, thick] (0,0) rectangle (1, -1);
\draw[thick,rounded corners,color=blue](0, -1/2)--(1, -1/2);
\draw[thick,rounded corners,color=blue](1/2, 0)--(1/2, -1);
\end{tikzpicture}
\hspace{1em}
\begin{tikzpicture}[x=1.25em,y=1.25em]
\draw[step=1,gray, thin] (0,0) grid (1, -1);
\draw[color=black, thick] (0,0) rectangle (1, -1);
\end{tikzpicture}
\hspace{1em}
\begin{tikzpicture}[x=1.25em,y=1.25em]
\draw[step=1,gray, thin] (0,0) grid (1, -1);
\draw[color=black, thick] (0,0) rectangle (1, -1);
\draw[thick,rounded corners,color=blue](0, -1/2)--(1, -1/2);
\end{tikzpicture}
\hspace{1em}
\begin{tikzpicture}[x=1.25em,y=1.25em]
\draw[step=1,gray, thin] (0,0) grid (1, -1);
\draw[color=black, thick] (0,0) rectangle (1, -1);
\draw[thick,rounded corners,color=blue](1/2, 0)--(1/2, -1);
\end{tikzpicture}
\end{equation*}
We may tile the $n\times n$ grid with these tiles and consider the result to be a network of pipes.
\begin{definition}\label{def:bpd} A \newword{bumpless pipe dream} is a tiling such that
\begin{enumerate}
    \item there are $n$ pipes total, and
    \item each pipe starts at the bottom of the grid and exits at the right edge of the grid.
\end{enumerate}
\end{definition}
Write $\bpd(n)$ for the set of bumpless pipe dreams in an $n \times n$ grid.
There is a simple bijection from $\bpd(n)$ to $\asm(n)$ \cite[Section 4]{Beh08}.  The bumpless pipe dream $\mathcal B\in \bpd(n)$ maps to the matrix $A\in \asm(n)$ which is defined by
\[A_{i,j}=\begin{cases}
 1   & \text{if cell } (i,j) \text{ in } \mathcal B \text{ is } \begin{tikzpicture}[x=1em,y=1em]
\draw[step=1,gray, thin] (0,0) grid (1, -1);
\draw[color=black, thick] (0,0) rectangle (1, -1);
\draw[thick,rounded corners,color=blue](1/2, -1)--(1/2, -1/2)--(1, -1/2);
\end{tikzpicture} \\
 -1   &\text{if cell } (i,j) \text{ in } \mathcal B \text{ is } 
    \begin{tikzpicture}[x=1em,y=1em]
\draw[step=1,gray, thin] (0,0) grid (1, -1);
\draw[color=black, thick] (0,0) rectangle (1, -1);
\draw[thick,rounded corners,color=blue](1/2, 0)--(1/2, -1/2)--(0, -1/2);
\end{tikzpicture}
    \\
  0  & \text{otherwise}.\\
\end{cases}\]

Given $A\in \asm(n)$, let $\asmtobpd(A)$ be its corresponding bumpless pipe dream.  
Note that the BPD associated to an ASM $A$ is essentially the visualization of its Rothe diagram, though we draw the corners rounded, as is often the style when working with pipe dreams.

\begin{lemma}
\label{lemma:bpdcornersumdictionary}
    Let $i,j\in [n]$ and write $\rk_A(i,j)=k$.  The restriction of $\rk_A$ to $\{i-1,i\}\times \{j-1,j\}$ is determined by the type of tile in position $(i,j)$ of $\asmtobpd(A)$ as pictured below.
    \begin{center}
    \begin{tabular}{|c|c|c|c|c|c|}\hline
\begin{tikzpicture}[x=1.25em,y=1.25em] 
\draw[step=1,gray, thin] (0,0) grid (1, -1);
\draw[color=black, thick] (0,0) rectangle (1, -1);
\draw[thick,rounded corners,color=blue](1/2, -1)--(1/2, -1/2)--(1, -1/2);
\end{tikzpicture}  
&  \begin{tikzpicture}[x=1.25em,y=1.25em]
\draw[step=1,gray, thin] (0,0) grid (1, -1);
\draw[color=black, thick] (0,0) rectangle (1, -1);
\draw[thick,rounded corners,color=blue](1/2, 0)--(1/2, -1/2)--(0, -1/2);
\end{tikzpicture}
& \begin{tikzpicture}[x=1.25em,y=1.25em]
\draw[step=1,gray, thin] (0,0) grid (1, -1);
\draw[color=black, thick] (0,0) rectangle (1, -1);
\draw[thick,rounded corners,color=blue](0, -1/2)--(1, -1/2);
\draw[thick,rounded corners,color=blue](1/2, 0)--(1/2, -1);
\end{tikzpicture}
& \begin{tikzpicture}[x=1.25em,y=1.25em]
\draw[step=1,gray, thin] (0,0) grid (1, -1);
\draw[color=black, thick] (0,0) rectangle (1, -1);
\end{tikzpicture}
&\begin{tikzpicture}[x=1.25em,y=1.25em]
\draw[step=1,gray, thin] (0,0) grid (1, -1);
\draw[color=black, thick] (0,0) rectangle (1, -1);
\draw[thick,rounded corners,color=blue](0, -1/2)--(1, -1/2);
\end{tikzpicture} 
&\begin{tikzpicture}[x=1.25em,y=1.25em]
\draw[step=1,gray, thin] (0,0) grid (1, -1);
\draw[color=black, thick] (0,0) rectangle (1, -1);
\draw[thick,rounded corners,color=blue](1/2, 0)--(1/2, -1);
\end{tikzpicture} 
\\ \hline
      $\begin{pmatrix}
          k-1&k-1\\
          k-1&k
      \end{pmatrix}   $
      & 
      $\begin{pmatrix}
          k-1&k\\
          k& k
      \end{pmatrix}$
         & 
         $\begin{pmatrix}
          k-2&k-1\\
          k-1& k
      \end{pmatrix}$
         &
        $ \begin{pmatrix}
          k&k\\
          k& k
      \end{pmatrix}$
         & 
       $  \begin{pmatrix}
          k-1&k-1\\
          k& k
      \end{pmatrix}$
         & 
        $ \begin{pmatrix}
          k-1&k\\
          k-1& k
      \end{pmatrix}$
         \\ \hline
    \end{tabular}
    \end{center}
\end{lemma}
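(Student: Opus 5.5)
The plan is to express $\rk_A$ on the $2\times 2$ block through the single entry $A_{i,j}$ and the two partial sums $r=\sum_{b<j}A_{i,b}$ and $c=\sum_{a<i}A_{a,j}$, and then to read $A_{i,j}$, $r$ and $c$ off the tile. Throughout we use the convention $\rk_A(0,\cdot)=\rk_A(\cdot,0)=0$. From the definition of the corner sum we have the identities
\[
\rk_A(i,j)-\rk_A(i,j-1)=c+A_{i,j},\qquad \rk_A(i,j)-\rk_A(i-1,j)=r+A_{i,j},
\]
\[
\rk_A(i,j)-\rk_A(i-1,j)-\rk_A(i,j-1)+\rk_A(i-1,j-1)=A_{i,j}.
\]
Given $k=\rk_A(i,j)$, these determine the remaining three entries as
\[
\rk_A(i,j-1)=k-c-A_{i,j},\qquad \rk_A(i-1,j)=k-r-A_{i,j},\qquad \rk_A(i-1,j-1)=k-r-c-A_{i,j}.
\]
So it suffices to show that the tile at $(i,j)$ determines $A_{i,j}$, $r$ and $c$.

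The entry $A_{i,j}$ is given directly by the bijection with $\asmtobpd(A)$: it is $1$ for the southeast elbow, $-1$ for the northwest elbow, and $0$ otherwise. For $r$ and $c$ we use the alternating-sign structure together with the way the pipes are drawn in the Rothe diagram.

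In row $i$, a horizontal pipe segment enters cell $(i,j)$ from the left exactly when the partial sum $r$ equals $1$. A pipe is emitted rightward from each $1$ and is terminated at the next $-1$. Since the nonzero entries of a row alternate starting with $1$, the running row sum is $1$ precisely on the stretches between a $1$ and the following $-1$, and these are exactly the horizontal segments.

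Similarly, in column $j$, a vertical segment enters $(i,j)$ from above exactly when $c=1$. Pipes run down from each $1$ to the bottom edge, and an incoming vertical from below terminates at a $-1$. In the BPD picture, pipes go from the bottom edge up to a $1$, and vertical segments are also present between a $-1$ and the $1$ above it. Reading from the top, the column partial sum is $1$ exactly on the cells strictly below a $1$ and weakly above the next $-1$. This matches the cells with a pipe entering from the top.

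Reading $r$ and $c$ off the tiles gives the following.
\begin{itemize}
\item Southeast elbow: $r=c=0$.
\item Northwest elbow: $r=c=1$.
\item Cross: $r=c=1$.
\item Blank: $r=c=0$.
\item Horizontal: $r=1$, $c=0$.
\item Vertical: $r=0$, $c=1$.
\end{itemize}

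Substituting into the formulas above, together with the value of $A_{i,j}$ for each tile, gives the six tables. For example, the northwest elbow has $A_{i,j}=-1$ and $r=c=1$, which yields $\rk_A(i,j-1)=\rk_A(i-1,j)=k$ and $\rk_A(i-1,j-1)=k-1$. The cross has $A_{i,j}=0$ and $r=c=1$, which yields the entries $k-1,k-1,k-2$.

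The main obstacle is the justification that "pipe enters from the left/top" is equivalent to "partial sum equals $1$". This is the one place where the BPD/Rothe-diagram conventions enter, and it needs a short argument using the alternation property (conditions (1) and (2) of the ASM definition). Everything after that is routine substitution.
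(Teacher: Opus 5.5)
Your proof is correct, and it takes a different route from the paper. The paper gives no computation: it simply cites Lemmas~3.3 and~3.5 of \cite{Wei21}. You argue from first principles instead. Inclusion--exclusion expresses the other three entries of the block as $k-c-A_{i,j}$, $k-r-A_{i,j}$ and $k-r-c-A_{i,j}$, and the tile then determines $A_{i,j}$, $r$ and $c$.

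All six substitutions check out. Your convention $\rk_A(0,\cdot)=\rk_A(\cdot,0)=0$ is exactly what the statement needs when $i=1$ or $j=1$.

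The citation buys brevity, while your argument buys self-containment. Your key step is that a pipe enters $(i,j)$ from above iff $c=\widetilde{A}_{i-1,j}=1$, and from the left iff $r=1$. That is precisely the content of Lemma~\ref{lemma:bpdtomonotone}, which the paper derives \emph{from} this table. Your route reverses that dependence and proves both lemmas at once.

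There are two small repairs.

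First, the column paragraph contains two false sentences: ``Pipes run down from each $1$ to the bottom edge'' and ``an incoming vertical from below terminates at a $-1$.'' The $-1$ tile has ports only on the top and left. So a vertical run leaving a $1$ stops at the next $-1$ below it, and only the run from the lowest $1$ in the column reaches the bottom edge. Your next two sentences state this correctly. Delete the incorrect sentences and add ``or down to the bottom edge if there is no later $-1$.''

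Second, you can justify the key step without appealing to the Rothe-diagram picture. No pipe of a BPD meets the left or top boundary. Let $h_b\in\{0,1\}$ record whether a pipe crosses the edge between cells $(i,b)$ and $(i,b+1)$, with $h_0=0$. Then $h_b-h_{b-1}$ is $+1$ for the elbow with bottom and right ports, $-1$ for the elbow with top and left ports, and $0$ for the other four tiles. That is, $h_b-h_{b-1}=A_{i,b}$. Telescoping gives $h_{j-1}=r$, and the same argument down column $j$ shows the top edge of $(i,j)$ is crossed iff $c=1$.
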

\begin{proof}
    This follows immediately from \cite[Lemma 3.3]{Wei21} and \cite[Lemma 3.5]{Wei21}.
\end{proof}

We have previously seen bijections from $\bpd(n)$ to $\asm(n)$ and from $\asm(n)$ to $\monotone(n)$. The following lemma gives a direct construction of the composite. 

\begin{lemma}
\label{lemma:bpdtomonotone}
    Let $\mathcal{B} \in \bpd(n)$ and $A = \asmtobpd^{-1}(\mathcal{B})$.
    \begin{enumerate}
        \item The $i$th row of $m_A$, i.e., the set $\{m_A(i,1),m_A(i,2),\ldots, m_A(i,i)\}$ is the set of column indices $j$ so that a vertical pipe in column $j$ of $\mathcal B$ passes from row $i$ to row $i+1$.
        \item The $j$th row of $m_{A^T}$, i.e., the set $\{m_{A^T}(j,1),m_{A^T}(j,2),\ldots, m_{A^T}(j,j)\}$ is the set of row indices $i$ so that a horizontal pipe in row $i$ of $\mathcal B$ passes from column $j$ to $j+1$.
    \end{enumerate}
\end{lemma}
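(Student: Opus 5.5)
The plan is to read both statements off the partial column sums $\widetilde{A}_{i,j}=\sum_{a\le i}A_{a,j}$ and the analogous partial row sums, and to identify these with pipe segments crossing cell boundaries in $\mathcal B=\asmtobpd(A)$. By definition of $m_A$, row $i$ of $m_A$ is the set of $j$ with $\widetilde{A}_{i,j}=1$. So for part (1) it suffices to prove:
\begin{equation*}
\widetilde{A}_{i,j}=1 \iff \text{a pipe of } \mathcal B \text{ crosses the horizontal edge between cells } (i,j) \text{ and } (i+1,j).
\end{equation*}

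For this equivalence, induct on $i$ with $j$ fixed, scanning column $j$ from the top. Let $e_i\in\{0,1\}$ record whether a pipe crosses the bottom edge of cell $(i,j)$, and set $e_0=0$, since pipes enter only at the bottom and exit only at the right, so none crosses the top boundary. I claim $e_i=e_{i-1}+A_{i,j}$, which can be checked tile by tile:
\begin{itemize}
\item The elbow joining bottom and right has top empty and bottom occupied, so $A_{i,j}=1$ and $e$ goes from $0$ to $1$.
\item The elbow joining top and left has top occupied and bottom empty, so $A_{i,j}=-1$ and $e$ goes from $1$ to $0$.
\item The cross and the vertical tile have both top and bottom occupied, and the blank and horizontal tiles have both empty; in all four cases $A_{i,j}=0$ and $e$ is unchanged.
\end{itemize}
Summing gives $e_i=\sum_{a\le i}A_{a,j}=\widetilde{A}_{i,j}$, which proves (1). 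Alternatively, one can use \cref{lemma:bpdcornersumdictionary}: $\widetilde{A}_{i,j}=\rk_A(i,j)-\rk_A(i,j-1)$, and the table's bottom-row difference is $1$ exactly for the four tiles whose bottom edge carries a pipe.

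Part (2) is the transpose of the same argument. Row $j$ of $m_{A^T}$ is the set of $i$ with $\sum_{b\le j}A_{i,b}=1$. Scan row $i$ from the left, with the left boundary empty since no pipe exits on the left. The same tile check shows that the right edge of cell $(i,j)$ is occupied exactly when this partial row sum is $1$:
\begin{itemize}
\item The bottom-right elbow turns the right edge on and contributes $+1$.
\item The top-left elbow turns it off and contributes $-1$.
\item The cross and horizontal tiles have both left and right occupied, and the blank and vertical tiles have both empty, so each contributes $0$.
\end{itemize}
Equivalently, this uses the right-column difference $\rk_A(i,j)-\rk_A(i-1,j)$ in \cref{lemma:bpdcornersumdictionary}.

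There is no serious obstacle here. The one point needing care is the boundary convention: the initial value must be $0$ and the tile-by-tile transitions must be consistent. This follows because each tile is locally consistent with the ASM entries under the bijection of \cite{Beh08}.
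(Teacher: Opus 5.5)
Your proof is correct, but your main argument takes a somewhat different route from the paper's. The paper proves (1) in one line: it reads off $\widetilde{A}_{i,j}=\rk_A(i,j)-\rk_A(i,j-1)$ from the $2\times 2$ table in \cref{lemma:bpdcornersumdictionary}, which it imports from \cite{Wei21}, and it gets (2) by transpose symmetry. That is exactly your ``alternatively'' remark.

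Your primary argument instead uses only two things: the defining dictionary of $\asmtobpd$ (which tiles give $\pm 1$) and the fact that no pipe touches the top or left boundary. The key local fact is that, for every tile, occupancy of the bottom edge minus occupancy of the top edge equals $A_{i,j}$. So the partial column sums simply track pipe crossings, starting from the empty top boundary. This route is self-contained and in effect reproves the part of the corner-sum dictionary that is needed. As a bonus, it shows directly that the partial sums lie in $\{0,1\}$. The paper's version is shorter because it leans on the cited lemma and stays within the rank-function language used elsewhere in the paper.

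One small slip: only three tiles carry a pipe on their bottom edge, not four. These are the bottom-right elbow, the cross, and the vertical tile. The claim itself is unaffected.
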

\begin{proof}
   Let $A\in \asm(n)$ and $\mathcal B=\asmtobpd(A)$.  (1) follows by observing that $\widetilde{A}$ (as in \cref{section:monotone_triangles}) has the following property: 
   $\widetilde{A}_{i,j}=1$ if and only if a vertical pipe in $\mathcal B$ in column $j$ exits row $i$ and passes into row $i+1$.  One can verify this directly from \cref{lemma:bpdcornersumdictionary} along with the observation that $\widetilde{A}_{i,j}=\rk_A(i,j)-\rk_A(i,j-1)$.

   The second claim follows by transpose symmetry.
\end{proof}

\begin{example}
    Let $A$ be the ASM from \cref{ex: asm to monotone}.  Its corresponding BPD $\mathcal{B}$ %(annotated with some markers $*$) 
    and the monotone triangle $m_A$ are pictured below.
    \[
    \raisebox{-3.5em}{\begin{tikzpicture}[x=1.5em,y=1.5em]
\draw[step=1,gray, thin] (0,0) grid (4,4);
\draw[color=black, thick](0,0)rectangle(4,4);
\draw[thick, rounded corners, color=blue] (.5,0)--(.5,1.5)--(4,1.5);
\draw[thick, rounded corners, color=blue] (1.5,0)--(1.5,2.5)--(2.5,2.5)--(2.5,3.5)--(4,3.5);
\draw[thick, rounded corners, color=blue] (2.5,0)--(2.5,.5)--(4,.5);
\draw[thick, rounded corners, color=blue] (3.5,0)--(3.5,2.5)--(4,2.5);
\node at (.5,0) {$*$};
\node at (1.5,0) {$*$};
\node at (2.5,0) {$*$};
\node at (3.5,0) {$*$};
\node at (.5,1) {$*$};
\node at (1.5,1) {$*$};
\node at (3.5,1) {$*$};
\node at (3.5,2) {$*$};
\node at (1.5,2) {$*$};
\node at (2.5,3) {$*$};
\node at (-1,2) {$\mathcal{B} = $};
\end{tikzpicture}}
\hspace{7em}
m_A =
\begin{array}{cccc}
3\\
2&4\\
1&2&4\\
1&2&3&4
\end{array}
\]
We have marked $\mathcal{B}$ with a $*$ in each place where a vertical pipe passes from one row to the next. The reader may verify that the columns of these marked crossings agree with the entries of $m_A$.
\end{example}

By combining \cref{lemma:bpdtomonotone} and \cref{prop:monotone_transpose_weak_order}, we obtain a BPD description of weak order, which we will explain below.

Fix $i\in[n-1]$ and a BPD $\mathcal B$.  Call a pipe in row $i+1$ that crosses from some column $j$ to column $j+1$ a \newword{liftable column crossing for $i$} if there is no pipe in row $i$ that crosses from column $j$ to column $j+1$.  Write $\lc_i(\mathcal B)$ for the set of column indices of the liftable column crossings for $i$ in $\mathcal B$.

We may express $\lc_i(\mathcal B)$ as a disjoint union of maximal size intervals, i.e.,
\[\lc_i(\mathcal B)=[a_1,b_1]\sqcup [a_2,b_2]\sqcup \cdots \sqcup [a_k,b_k]\]
with $a_h\leq b_h$ for all $h\in[k]$ and $b_h+1<a_{h+1}$ for all $h\in [k-1]$.  

Call the intervals $[a_h,b_h]$ \newword{liftable $i$-intervals} in $\mathcal B$.
For each $h\in[k]$, make the following local replacements of pipes within the rectangle $[i,i+1]\times[a_h,b_h+1]$ in $\mathcal B$:
\begin{enumerate}
    \item delete the horizontal pipe segment in row $i+1$ from the midpoint of column $a_h$ to the midpoint of column $b_h+1$,
    \item  draw a horizontal pipe segment in row $i$ from the midpoint of column $a_h$ to the midpoint of column $b_h+1$,
    \item delete the vertical pipe segment in column $b_h+1$ from the midpoint of row $i$ to the midpoint of row $i+1$, and
    \item draw a vertical pipe segment in column $a_h$ from the midpoint of row $i$ to the midpoint of row $i+1$.
\end{enumerate}
After doing these replacements, we obtain a new diagram $\mathcal B'$ which we will show is a valid BPD.  Call this map $\psi_i$. Because the intervals $[a_h,b_h+1]$ are disjoint, we may make all of these replacements simultaneously.  See \cref{ex:psi_i} for an example of these replacements.

\begin{proposition}
\label{prop:weakorderbpd}
    The map $\psi_i$ is well-defined.  Moreover, if $A \in \asm(n)$ is such that $\mathcal{B}=\asmtobpd(A)$, then $\psi_i(\mathcal{B})=\asmtobpd(\pi_i(A))$.
\end{proposition}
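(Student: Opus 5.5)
The plan is to reduce to \cref{prop:monotone_transpose_weak_order} applied to $A^T$, using the dictionary of \cref{lemma:bpdtomonotone}(2). Rows of $m_{A^T}$ record horizontal pipe crossings of $\mathcal B$. Precisely, row $j$ of $m_{A^T}$ is the set of row indices $r$ such that a horizontal pipe in row $r$ passes from column $j$ to column $j+1$.

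By the remark following \cref{prop:monotone_transpose_weak_order}, $m_{(\pi_i(A))^T} = m_{\pi_i^C(A^T)}$ is obtained from $m_{A^T}$ as follows: in every row $j$ containing $i+1$ but not $i$, replace $i+1$ by $i$. Translated to $\mathcal B$, row $j$ of $m_{A^T}$ has this property exactly when $j\in \lc_i(\mathcal B)$. So the monotone triangle of $(\pi_i(A))^T$ says that the horizontal crossings of the target BPD $\mathcal B''=\asmtobpd(\pi_i(A))$ agree with those of $\mathcal B$, except that for $j\in\lc_i(\mathcal B)$ the crossing from column $j$ to $j+1$ in row $i+1$ moves to row $i$.

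The first key step is to observe that a BPD is determined by its horizontal crossing data. This holds by \cref{lemma:bpdtomonotone}(2) together with the bijections $\bpd(n)\leftrightarrow\asm(n)\leftrightarrow\monotone(n)$ applied to $A^T$. Equivalently, the horizontal crossing data determine $\rk_A$ through \cref{eq:rank-from-triangle} applied to $A^T$, and hence determine the tiling via \cref{lemma:bpdcornersumdictionary}. Consequently, $\mathcal B''$ is the unique BPD whose horizontal crossing pattern is the modified one.

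The second step is to compare $\psi_i(\mathcal B)$ with this pattern. Steps (1) and (2) of the replacement, over each liftable interval $[a_h,b_h]$, delete the horizontal segment in row $i+1$ from the midpoint of column $a_h$ to the midpoint of column $b_h+1$ and redraw it in row $i$. This moves precisely the crossings $j\to j+1$ for $j\in[a_h,b_h]$ from row $i+1$ to row $i$, and changes nothing else horizontally. So $\psi_i(\mathcal B)$ has the desired horizontal crossing data. It remains to show that $\psi_i(\mathcal B)$ is a genuine BPD, which gives well-definedness; then uniqueness forces $\psi_i(\mathcal B)=\mathcal B''$.

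The main obstacle is the local tile check that $\psi_i(\mathcal B)$ is a valid tiling with $n$ pipes entering from the bottom and exiting on the right. I would argue as follows, using maximality of the interval $[a_h,b_h]$.

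At column $a_h$, there is a pipe in row $i+1$ entering from column $a_h-1$ only if $a_h-1$ is not liftable. Since the row-$(i+1)$ segment is horizontal through column $a_h$, either the pipe turns up from below (the tile is an up-right elbow), or $a_h-1$ is not liftable because row $i$ also crosses there. In each subcase, the new vertical segment in column $a_h$ from row $i$ to row $i+1$ reconnects the pieces into allowed tiles: an elbow becomes a vertical, or a horizontal becomes an elbow.

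At column $b_h+1$, a symmetric check applies. The pipe in row $i+1$ crosses into column $b_h+1$, but there is no liftable crossing from $b_h+1$. Row $i$ has no crossing into column $b_h+1$ and so, in the relevant case, carries a vertical pipe there descending. Deleting that vertical segment and extending row $i$ horizontally yields legal tiles.

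Interior columns swap horizontal segments between rows $i$ and $i+1$. These are legal because row $i$ had no crossing there, so no horizontal segment is doubled. I would organize this as a finite case table indexed by tiles at $(i,a_h),(i+1,a_h),(i,b_h+1),(i+1,b_h+1)$, invoking \cref{lemma:bpdcornersumdictionary} to rule out impossible configurations. Finally, since only segments between midpoints are moved, pipe endpoints on the boundary are unchanged, so the pipe count and the entry/exit conditions persist.
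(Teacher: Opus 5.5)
Your proposal is correct and follows the paper's proof. Well-definedness comes from the local tile analysis at column $a_h$, the interior columns $a_h<j<b_h+1$, and column $b_h+1$, which is the paper's \cref{lemma:bpdreplacements}; the identification $\psi_i(\mathcal B)=\asmtobpd(\pi_i(A))$ comes from \cref{lemma:bpdtomonotone}(2) combined with \cref{prop:monotone_transpose_weak_order} applied to $A^T$. Your remark that a BPD is determined by its horizontal crossing data makes explicit a step the paper leaves implicit. When you write out the case table, record one forced tile: if row $i+1$ enters column $a_h$ from the left, the tile at $(i,a_h)$ must be the elbow joining the top and left edges, so the tile at $(i+1,a_h)$ is horizontal rather than a crossing, and this is what lets you add the new vertical segment legally.
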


We postpone the proof of \cref{prop:weakorderbpd} to later in this section.  First, an example.

\begin{example}
\label{ex:psi_i}
    Let $A$ be as in \cref{ex:B'-construction} and $i=3$.  The corresponding BPD $\mathcal B=\asmtobpd(A)$ is pictured below.
\[\begin{tikzpicture}[x=1.4em,y=1.4em]
\draw[step=1,gray, thin] (0,0) grid (7,7);
\draw[color=black, thick](0,0)rectangle(7,7);

\draw[thick, rounded corners, color=blue] (.5,0)--(.5,2.5)--(7,2.5);
\draw[thick, rounded corners, color=blue] (1.5,0)--(1.5,3.5)--(7,3.5);
\draw[thick, rounded corners, color=blue] (2.5,0)--(2.5,.5)--(7,.5);
\draw[thick, rounded corners, color=blue] (3.5,0)--(3.5,4.5)--(4.5,4.5)--(4.5,5.5)--(7,5.5);
\draw[thick, rounded corners, color=blue] (4.5,0)--(4.5,1.5)--(7,1.5);
\draw[thick, rounded corners, color=blue] (5.5,0)--(5.5,6.5)--(7,6.5);
\draw[thick, rounded corners, color=blue] (6.5,0)--(6.5,4.5)--(7,4.5);
\end{tikzpicture} \]
We have $\lc_i(\mathcal B)=\{2,3,5,6\}=[2,3]\sqcup[5,6]$.  Below, in the picture on the left, we highlight in red the segments of pipes which are deleted from $\mathcal{B}$.  In the next picture to the right, the red pipes are broken at points of intersection, replacing two crossing tiles with bumping tiles. In the third picture, we show the result of what are now commonly called undroop moves (the inverses of the droop moves of \cite{LLS21}) performed on the red segments of pipes.  In the final picture, on the line below, we replace the resultant bumping tile with a crossing tile. The red pipe segments now represent those that are added to obtain $\psi_i(\mathcal{B})$. 

\[\begin{tikzpicture}[x=1.4em,y=1.4em]
\draw[step=1,gray, thin] (0,0) grid (7,7);
\draw[color=black, thick](0,0)rectangle(7,7);
\draw[thick, rounded corners, color=blue] (.5,0)--(.5,2.5)--(7,2.5);
\draw[thick, rounded corners, color=blue] (1.5,0)--(1.5,3.5)--(7,3.5);
\draw[thick, rounded corners, color=blue] (2.5,0)--(2.5,.5)--(7,.5);
\draw[thick, rounded corners, color=blue] (3.5,0)--(3.5,4.5)--(4.5,4.5)--(4.5,5.5)--(7,5.5);
\draw[thick, rounded corners, color=blue] (4.5,0)--(4.5,1.5)--(7,1.5);
\draw[thick, rounded corners, color=blue] (5.5,0)--(5.5,6.5)--(7,6.5);
\draw[thick, rounded corners, color=blue] (6.5,0)--(6.5,4.5)--(7,4.5);
\draw[ultra thick, color=red](1.6,3.5)--(3.5,3.5)--(3.5,4.4);
\draw[ultra thick, color=red](4.5,3.5)--(6.5,3.5)--(6.5,4.4);
\end{tikzpicture} \hspace{1em} \raisebox{1.9cm}{$\mapsto$} \hspace{1em}
\begin{tikzpicture}[x=1.4em,y=1.4em]
\draw[step=1,gray, thin] (0,0) grid (7,7);
\draw[color=black, thick](0,0)rectangle(7,7);
\draw[thick, rounded corners, color=blue] (.5,0)--(.5,2.5)--(7,2.5);
\draw[thick, rounded corners, color=blue] (1.5,0)--(1.5,3.5)--(3.5,3.5)--(3.5,4.5)--(4.5,4.5)--(4.5,5.5)--(7,5.5);
\draw[thick, rounded corners, color=blue] (2.5,0)--(2.5,.5)--(7,.5);
\draw[thick, rounded corners, color=blue] (3.5,0)--(3.5,3.5)--(6.5,3.5)--(6.5,4.5)--(7,4.5);
\draw[thick, rounded corners, color=blue] (4.5,0)--(4.5,1.5)--(7,1.5);
\draw[thick, rounded corners, color=blue] (5.5,0)--(5.5,6.5)--(7,6.5);
\draw[thick, rounded corners, color=blue] (6.5,0)--(6.5,3.5)--(7,3.5);
\draw[ultra thick, rounded corners, color=red](1.6,3.5)--(3.5,3.5)--(3.5,4.4);
\draw[ultra thick, rounded corners, color=red](4.5,3.5)--(6.5,3.5)--(6.5,4.4);
\end{tikzpicture}
\hspace{1em} \raisebox{1.9cm}{$\mapsto$} \hspace{1em}
\begin{tikzpicture}[x=1.4em,y=1.4em]
\draw[step=1,gray, thin] (0,0) grid (7,7);
\draw[color=black, thick](0,0)rectangle(7,7);
\draw[thick, rounded corners, color=blue] (.5,0)--(.5,2.5)--(7,2.5);
\draw[thick, rounded corners, color=blue] (1.5,0)--(1.5,4.5)--(4.5,4.5)--(4.5,5.5)--(7,5.5);
\draw[thick, rounded corners, color=blue] (2.5,0)--(2.5,.5)--(7,.5);
\draw[thick, rounded corners, color=blue] (3.5,0)--(3.5,3.5)--(4.5,3.5)--(4.5,4.5)--(7,4.5);
\draw[thick, rounded corners, color=blue] (4.5,0)--(4.5,1.5)--(7,1.5);
\draw[thick, rounded corners, color=blue] (5.5,0)--(5.5,6.5)--(7,6.5);
\draw[thick, rounded corners, color=blue] (6.5,0)--(6.5,3.5)--(7,3.5);
\draw[ultra thick, rounded corners, color=red](1.5,3.5)--(1.5,4.5)--(3.5,4.5);
\draw[ultra thick, rounded corners, color=red](4.5,3.6)--(4.5,4.5)--(6.5,4.5);
\end{tikzpicture}
\]
This gives us $\psi_i(\mathcal{B})$:
\[\begin{tikzpicture}[x=1.4em,y=1.4em]
\draw[step=1,gray, thin] (0,0) grid (7,7);
\draw[color=black, thick](0,0)rectangle(7,7);
\draw[thick, rounded corners, color=blue] (.5,0)--(.5,2.5)--(7,2.5);
\draw[thick, rounded corners, color=blue] (1.5,0)--(1.5,4.5)--(7,4.5);
\draw[thick, rounded corners, color=blue] (2.5,0)--(2.5,.5)--(7,.5);
\draw[thick, rounded corners, color=blue] (3.5,0)--(3.5,3.5)--(4.5,3.5)--(4.5,5.5)--(7,5.5);
\draw[thick, rounded corners, color=blue] (4.5,0)--(4.5,1.5)--(7,1.5);
\draw[thick, rounded corners, color=blue] (5.5,0)--(5.5,6.5)--(7,6.5);
\draw[thick, rounded corners, color=blue] (6.5,0)--(6.5,3.5)--(7,3.5);
\draw[ultra thick,  rounded corners, color=red](1.5,3.5)--(1.5,4.5)--(3.5,4.5);
\draw[ultra thick,  color=red](4.5,3.6)--(4.5,4.5)--(6.5,4.5);
\end{tikzpicture}\]
By \cref{prop:weakorderbpd}, $\psi_i(\mathcal{B})=\asmtobpd(\pi_3(A))$.
\end{example}

\begin{lemma}
\label{lemma:bpdreplacements}
Let $\mathcal B$ be a BPD.  Suppose $[a,b]$ is a liftable $i$-interval in $\mathcal B$ and let $\mathcal B'=\psi_i(\mathcal B)$.

\begin{enumerate}

\item The replacement $\mathcal B\mapsto \mathcal B'$, restricted to $[i,i+1]\times[a,a]$, must be one of the following:
\[
\begin{tikzpicture}[x=1.25em,y=1.25em]
	\draw[step=1,gray, thin] (0,0) grid (1,2);
	\draw[thick] (0,0)--(1,0)--(1,2)--(0,2)--cycle;
	\draw[thick,rounded corners,color=blue] (.5,0)--(.5,.5)--(1,.5);
\end{tikzpicture}
\quad
\raisebox{1em}{$\mapsto$}
\quad
\begin{tikzpicture}[x=1.25em,y=1.25em]
	\draw[step=1,gray, thin] (0,0) grid (1,2);
	\draw[thick] (0,0)--(1,0)--(1,2)--(0,2)--cycle;
	\draw[thick,rounded corners,color=blue] (.5,0)--(.5,1.5)--(1,1.5);
\end{tikzpicture}
\hspace{5em}
\begin{tikzpicture}[x=1.25em,y=1.25em]
	\draw[step=1,gray, thin] (0,0) grid (1,2);
	\draw[thick] (0,0)--(1,0)--(1,2)--(0,2)--cycle;
	\draw[thick,rounded corners,color=blue] (0,1.5)--(.5,1.5)--(.5,2);
	\draw[thick,rounded corners,color=blue] (.5,0)--(.5,.5)--(1,.5);
\end{tikzpicture}
\quad
\raisebox{1em}{$\mapsto$}
\quad
\begin{tikzpicture}[x=1.25em,y=1.25em]
	\draw[step=1,gray, thin] (0,0) grid (1,2);
	\draw[thick] (0,0)--(1,0)--(1,2)--(0,2)--cycle;
	\draw[thick,rounded corners,color=blue] (0,1.5)--(1,1.5);
	\draw[thick,rounded corners,color=blue] (.5,0)--(.5,2);
\end{tikzpicture}
\hspace{5em}
\begin{tikzpicture}[x=1.25em,y=1.25em]
	\draw[step=1,gray, thin] (0,0) grid (1,2);
	\draw[thick] (0,0)--(1,0)--(1,2)--(0,2)--cycle;
	\draw[thick,rounded corners,color=blue] (0,1.5)--(.5,1.5)--(.5,2);
	\draw[thick,rounded corners,color=blue] (0,.5)--(1,.5);
\end{tikzpicture}
\quad
\raisebox{1em}{$\mapsto$}
\quad
\begin{tikzpicture}[x=1.25em,y=1.25em]
	\draw[step=1,gray, thin] (0,0) grid (1,2);
	\draw[thick] (0,0)--(1,0)--(1,2)--(0,2)--cycle;
	\draw[thick,rounded corners,color=blue] (0,1.5)--(1,1.5);
	\draw[thick,rounded corners,color=blue] (0,.5)--(.5,.5)--(.5,2);
\end{tikzpicture}
\]

   \item If $a<j<b+1$, then the replacement $\mathcal B\mapsto \mathcal B'$, restricted to  $[i,i+1]\times[j,j]$, must be one of the following configurations:
    \[
    \begin{tikzpicture}[x=1.25em,y=1.25em]
	\draw[step=1,gray, thin] (0,0) grid (1,2);
	\draw[color=black, thick](0,0)rectangle(1,2);
 	\draw[thick,rounded corners,color=blue](0,.5)--(1,.5);
	\end{tikzpicture}
           \quad
    \raisebox{1em}{$\mapsto$}
    \quad
    \begin{tikzpicture}[x=1.25em,y=1.25em]
	\draw[step=1,gray, thin] (0,0) grid (1,2);
	\draw[color=black, thick](0,0)rectangle(1,2);
 	\draw[thick,rounded corners,color=blue](0,1.5)--(1,1.5);
	\end{tikzpicture}
    \hspace{5em}
    \begin{tikzpicture}[x=1.25em,y=1.25em]
	\draw[step=1,gray, thin] (0,0) grid (1,2);
	\draw[color=black, thick](0,0)rectangle(1,2);
 	\draw[thick,rounded corners,color=blue](0,.5)--(1,.5);
    \draw[thick,rounded corners,color=blue](.5,0)--(.5,2);
	\end{tikzpicture}
           \quad
    \raisebox{1em}{$\mapsto$}
    \quad
     \begin{tikzpicture}[x=1.25em,y=1.25em]
	\draw[step=1,gray, thin] (0,0) grid (1,2);
	\draw[color=black, thick](0,0)rectangle(1,2);
 	\draw[thick,rounded corners,color=blue](0,1.5)--(1,1.5);
    \draw[thick,rounded corners,color=blue](.5,0)--(.5,2);
	\end{tikzpicture}
    \]

\item The replacement $\mathcal B\mapsto \mathcal B'$, restricted to  $[i,i+1]\times [b+1,b+1]$, must be one of the following configurations:
\[
\begin{tikzpicture}[x=1.25em,y=1.25em]
	\draw[step=1,gray, thin] (0,0) grid (1,2);
	\draw[thick] (0,0)--(1,0)--(1,2)--(0,2)--cycle;
	\draw[thick,rounded corners,color=blue] (.5,0)--(.5,1.5)--(1,1.5);
	\draw[thick,rounded corners,color=blue] (0,.5)--(1,.5);
\end{tikzpicture}
\quad
\raisebox{1em}{$\mapsto$}
\quad
\begin{tikzpicture}[x=1.25em,y=1.25em]
	\draw[step=1,gray, thin] (0,0) grid (1,2);
	\draw[thick] (0,0)--(1,0)--(1,2)--(0,2)--cycle;
	\draw[thick,rounded corners,color=blue] (0,1.5)--(1,1.5);
	\draw[thick,rounded corners,color=blue] (.5,0)--(.5,.5)--(1,.5);
\end{tikzpicture}
\hspace{5em}
\begin{tikzpicture}[x=1.25em,y=1.25em]
	\draw[step=1,gray, thin] (0,0) grid (1,2);
	\draw[thick] (0,0)--(1,0)--(1,2)--(0,2)--cycle;
	\draw[thick,rounded corners,color=blue] (0,.5)--(.5,.5)--(.5,2);
\end{tikzpicture}
\quad
\raisebox{1em}{$\mapsto$}
\quad
\begin{tikzpicture}[x=1.25em,y=1.25em]
	\draw[step=1,gray, thin] (0,0) grid (1,2);
	\draw[thick] (0,0)--(1,0)--(1,2)--(0,2)--cycle;
	\draw[thick,rounded corners,color=blue] (0,1.5)--(.5,1.5)--(.5,2);
\end{tikzpicture}
\hspace{5em}
\begin{tikzpicture}[x=1.25em,y=1.25em]
	\draw[step=1,gray, thin] (0,0) grid (1,2);
	\draw[thick] (0,0)--(1,0)--(1,2)--(0,2)--cycle;
	\draw[thick,rounded corners,color=blue] (0,.5)--(.5,.5)--(.5,1.5)--(1,1.5);
\end{tikzpicture}
\quad
\raisebox{1em}{$\mapsto$}
\quad
\begin{tikzpicture}[x=1.25em,y=1.25em]
	\draw[step=1,gray, thin] (0,0) grid (1,2);
	\draw[thick] (0,0)--(1,0)--(1,2)--(0,2)--cycle;
	\draw[thick,rounded corners,color=blue] (0,1.5)--(1,1.5);
\end{tikzpicture}
\]

\end{enumerate}
\end{lemma}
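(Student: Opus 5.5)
The plan is a local case analysis. Each $2\times 1$ block $[i,i+1]\times[j,j]$ of $\mathcal B$ is determined by which tile boundary segments carry pipes: the left and right boundaries of the two cells in rows $i$ and $i+1$, the top of row $i$, the bottom of row $i+1$, and the middle boundary between rows $i$ and $i+1$. The definition of a liftable $i$-interval $[a,b]$ dictates most of this data. For $j\in[a,b]$ the boundary between columns $j$ and $j+1$ carries a pipe in row $i+1$ but not in row $i$. By maximality of the interval, at column $a-1$ (if $a>1$) the boundary between columns $a-1$ and $a$ either carries no pipe in row $i+1$, or carries pipes in both rows $i$ and $i+1$. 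Similarly, at column $b+1$ the boundary between $b+1$ and $b+2$ either carries no pipe in row $i+1$, or carries pipes in both rows. At the left edge of the grid no pipe enters, so the first option holds automatically when $a=1$. At the right edge every row has a pipe exiting, so the second option holds when $b+1=n$. The tile rules then constrain the remaining segments.

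Column $a$: the right boundaries are (no pipe in row $i$, pipe in row $i+1$). Tiles with no right exit are the empty tile, the vertical tile, and the $\circ$-elbow (left and top). Tiles with a right exit are the $\bullet$-elbow (bottom and right), the horizontal tile, and the crossing. I will split on the left boundary. If row $i+1$ has no left entry, its tile is the $\bullet$-elbow, so the middle boundary has no pipe. Row $i$ then has no bottom and no right, so it is empty or a $\circ$-elbow. These give the first two pictures of part (1). If rows $i$ and $i+1$ both have left entries, row $i+1$ is horizontal or a crossing, and row $i$ has left but not right, so it is a $\circ$-elbow with no bottom. Then row $i+1$ has no top and is horizontal, giving the third picture. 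Verifying that the post-images are as drawn is immediate from the four replacement steps: add the vertical segment in column $a$ between row midpoints, and move the horizontal half-segment from row $i+1$ to row $i$.

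Interior columns $a<j<b+1$: both the left and right boundaries are (no, yes). So row $i$ is empty or vertical and row $i+1$ is horizontal or a crossing, and the vertical data must match across the middle boundary. This yields exactly the empty/horizontal and vertical/crossing stacks of part (2). Moving the horizontal segment up gives the pictures shown.

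Column $b+1$: the left boundary is (no, yes), so row $i$ has no left and row $i+1$ has a left entry. I will split on the right boundary. If both rows have right exits, row $i$ is a $\bullet$-elbow. Row $i+1$ then has a top, so it is a crossing, which is the first picture. If neither row has a right exit, row $i$ is vertical or empty. Row $i+1$ is a $\circ$-elbow, so it needs a top, which means row $i$ is vertical: the second picture. The remaining pattern, a right exit in row $i$ only, gives a $\bullet$-elbow over a $\circ$-elbow, which is the third picture. This is the step I expect to be the main obstacle, because I must rule out the pattern with a right exit in row $i+1$ only. That pattern is excluded because $b$ is the right end of a maximal interval, unless $b+1=n$, in which case both rows exit. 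I will then confirm that deleting the middle vertical segment and swapping the rows of the horizontal half-segment produces the stated tiles.
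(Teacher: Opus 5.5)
Your proposal is correct and is essentially the paper's proof: a local case analysis in which the definition of a liftable $i$-interval fixes the horizontal boundary data at columns $a$, $j$, and $b+1$, maximality excludes the one forbidden pattern at each end, and the admissible tiles and their images under the four replacement steps are read off directly. Your explicit handling of the grid edges ($a=1$ and $b+1=n$) is a small detail that the paper leaves implicit.
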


\begin{proof}
    \noindent (1) Because column $a$ is the start of a liftable $i$-interval, $\mathcal B$ must have a pipe exiting column $a$ in row $i+1$ but no pipe exiting column $a$ in row $i$.  Furthermore, we cannot simultaneously have a pipe entering column $a$ in row $i+1$ and no pipe entering in row $i$.  Otherwise, column $a-1$ would be a liftable column crossing for $i$ in $\mathcal B$.
    
The three pictured configurations are the only ones satisfying these requirements. The local replacement within $[i,i+1]\times [a,a]$ in all cases is as described in the definition of $\psi_i$.

    \noindent (2) In this case, in row $i+1$ of $\mathcal B$, a pipe must both enter and exit column $j$ and no pipe may enter or exit column $j$ in row $i$.  The two pictured configurations are the only configurations satisfying these requirements. 
 
 The local replacement within $[i,i+1]\times [j,j]$ in both cases is as described in the definition of $\psi_i$.

    \noindent (3) Because column $b$ is the end of a liftable $i$-interval, $\mathcal{B}$ must have no pipe entering column $b+1$ in row $i$, and must have a pipe entering column $b+1$ in row $i+1$.  Furthermore, we cannot simultaneously have a pipe leaving column $b+1$ in row $i+1$ and no pipe leaving in row $i$.  Otherwise, column $b+1$ would also be a liftable column crossing for $i$.

    The three pictured configurations are the only ones satisfying these requirements.  The local replacement within $[i,i+1]\times [b+1,b+1]$ in all cases is as described in the definition of $\psi_i$.
\end{proof}

We now prove the main proposition of this section.
\begin{proof}[{Proof of \cref{prop:weakorderbpd}}]
    That $\psi_i$ is a well-defined map from $\bpd(n)$ to $\bpd(n)$ follows from \cref{lemma:bpdreplacements}.  The equality $\psi_i(\mathcal B)=\asmtobpd(\pi_i(A))$ follows from \cref{lemma:bpdtomonotone} and \cref{prop:monotone_transpose_weak_order}.
\end{proof}

We conclude this section by summarizing some of the conditions that are equivalent to having no essential cell in row $i$.

\begin{proposition}
\label{equivalent-descent-conditions}
    Let $A\in \asm(n)$.  The following are equivalent:
    \begin{enumerate}
        \item Each row of $m_{A^T}$ that contains $i+1$ also contains $i$.
        \item $\pi_i(A)=A$, i.e., $A$ does not have a descent in row $i$.
        \item $A$ does not have an essential cell in row $i$.
        \item $m_A$ does not have an essential point in row $i$.
        \item There exists $C\in \asm(n)$ such that $C\neq A$ and $\pi_i(C)=A$.
        \item The BPD $\asmtobpd(A)$ has no liftable column crossing for $i$.
    \end{enumerate}
\end{proposition}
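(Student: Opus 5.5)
Most of these equivalences are already available, so the plan is to assemble them around condition (2). The equivalence (2)$\Leftrightarrow$(3) is \cref{lemma:descentessential}. For (3)$\Leftrightarrow$(4), I will use \cref{lemma:monotone_essential_set_translation}: it sends an essential point $(i,b)$ of $m_A$ to the essential cell $(i,m_A(i,b)-1)$ of $A$. Its converse direction sends every essential cell in row $i$ to an essential point in row $i$. Hence row $i$ contains an essential cell of $A$ exactly when it contains an essential point of $m_A$.

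Next I link (1) and (6) to (2) through \cref{lemma:bpdtomonotone}(2). By that lemma, row $j$ of $m_{A^T}$ is the set of rows $r$ in which a horizontal pipe of $\asmtobpd(A)$ crosses from column $j$ to column $j+1$. So row $j$ of $m_{A^T}$ contains $i+1$ but not $i$ exactly when $j\in\lc_i(\asmtobpd(A))$. This gives (1)$\Leftrightarrow$(6).

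For (1)$\Leftrightarrow$(2), I apply \cref{prop:monotone_transpose_weak_order} to $A^T$. It says $m_{\pi_i^C(A^T)}$ is obtained from $m_{A^T}$ by replacing each $i+1$ that lies in a row without an $i$ by $i$. Since $\pi_i^C(A^T)=\pi_i(A)^T$ and the maps $A\mapsto m_{A^T}$ are injective, $\pi_i(A)=A$ holds iff no replacement occurs, which is condition (1). Alternatively, (6)$\Leftrightarrow$(2) follows from \cref{prop:weakorderbpd}, because $\psi_i$ changes the BPD iff $\lc_i$ is nonempty.

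The remaining equivalence is (2)$\Leftrightarrow$(5), and I expect it to be the step needing the most care. For (5)$\Rightarrow$(2), I will check idempotence: $\pi_i(\pi_i(C))=\pi_i(C)$. This holds because $B=\pi_i(C)$ agrees with $C$ off row $i$ of the rank function. So the set over which $\pi_i(B)$ is minimized, as in \cref{def:pi_i}, coincides with the set defining $\pi_i(C)$, and the two minima agree. Then $\pi_i(A)=\pi_i(\pi_i(C))=\pi_i(C)=A$.

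For (2)$\Rightarrow$(5), given $\pi_i(A)=A$ I must produce $C\neq A$ with $\pi_i(C)=A$. The most concrete route is through BPDs. When $\lc_i(\asmtobpd(A))=\emptyset$, I would perform an inverse replacement: choose a column interval where a pipe crosses horizontally in row $i$ but not in row $i+1$, and move that segment down. This should be the exact inverse of the local pictures in \cref{lemma:bpdreplacements}, producing a BPD $\mathcal C$ with $\psi_i(\mathcal C)=\asmtobpd(A)$.

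The obstacle here is the case where no such interval exists, for instance when rows $i$ and $i+1$ have identical horizontal crossings. In that case I would fall back on the monotone-triangle picture. I would try to raise some entry equal to $i$ in row $j$ of $m_{A^T}$ to $i+1$, choosing a row with no $i+1$, so as to obtain a valid monotone triangle. Failing that, I would fall back to the permutation case $w\mapsto ws_i$ and the join formula \cref{prop:joindescentpart3MT}. If no $C$ can be constructed in this degenerate case, then (5) cannot be proved there as literally stated, and the direction should be rechecked against the intended statement.
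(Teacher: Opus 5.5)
\textbf{There is a genuine gap in (2)$\Rightarrow$(5).} Your arguments for (1)$\Leftrightarrow$(2), (2)$\Leftrightarrow$(3), (3)$\Leftrightarrow$(4), (5)$\Rightarrow$(2) and (1)$\Leftrightarrow$(6) are fine. The last one, via \cref{lemma:bpdtomonotone}(2), is more direct than the paper's route through \cref{prop:weakorderbpd}.

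The problem is the remaining direction. Your construction needs a droppable column crossing, and you leave open the case $\lc_i(\asmtobpd(A))=\dc_i(\asmtobpd(A))=\emptyset$. You even suggest (5) might fail there. Neither fallback resolves this. Raising an $i$ to $i+1$ in a row of $m_{A^T}$ that lacks $i+1$ is the same droppable move in monotone-triangle language, so it has no candidate row in exactly that case. The join idea is not carried out. As written, (2)$\Rightarrow$(5) is unproved. The paper does not construct $C$ at all: it obtains (3)$\Rightarrow$(5) from \cite[Lemma 6.2]{HR20}, transported through \cref{weak_order_isomorphism}.

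\textbf{The missing observation is that the degenerate case cannot occur.} Let $j$ be the least column such that row $i$ or row $i+1$ of $\asmtobpd(A)$ has a pipe passing from column $j$ to column $j+1$. Count exit through the right boundary as such a crossing, so $j\le n$ exists.

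Suppose both rows had a crossing at $j$. By minimality, no pipe enters tile $(i,j)$ or tile $(i+1,j)$ from the left. The only tile with a right end and no left end is the elbow with ends at the bottom and right, so both tiles would be that elbow. But then tile $(i,j)$ needs a pipe from below, and the elbow at $(i+1,j)$ has no top end, a contradiction.

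So exactly one of the two rows crosses at $j$. Under (6) it must be row $i$, hence $j\in\dc_i(\asmtobpd(A))$. Equivalently, by interlacing, the first row of $m_{A^T}$ containing $i$ or $i+1$ contains only one of them.

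\textbf{With this, your plan goes through.} Apply the droppable replacement to one maximal droppable interval $[a,b]$. Because $\lc_i(\asmtobpd(A))=\emptyset$, the configurations of $\asmtobpd(A)$ are exactly right-hand sides of pictures in \cref{lemma:bpdreplacements}:
\begin{itemize}
\item at column $a$, the first and third pictures of (1);
\item at columns strictly between $a$ and $b+1$, the pictures of (2);
\item at column $b+1$, the first and second pictures of (3).
\end{itemize}
Reversing these pictures therefore yields a BPD $\mathcal C$ with $\lc_i(\mathcal C)=[a,b]$ and $\psi_i(\mathcal C)=\asmtobpd(A)$. Then \cref{prop:weakorderbpd} gives $\pi_i(C)=A$ with $C\neq A$. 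Completed this way, your argument would be a self-contained replacement for the citation of \cite{HR20}.
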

\begin{proof}
    The equivalence of (1) and (2) follows from \cref{prop:monotone_transpose_weak_order}.

The equivalence of (3) and (4) is \cref{lemma:monotone_essential_set_translation}.

     (3) implies (5) is by \cite[Lemma 6.2]{HR20} and \cref{weak_order_isomorphism}. To see that (5) implies (2), assume the existence of such a $C$. It is immediate from the definition of the operator $\pi_i$ that it is idempotent. So then $A = \pi_i(C) = \pi_i(\pi_i(C)) = \pi_i(A)$. Now (2) implies (3) by \cref{lemma:descentessential} (which indeed is directly the equivalence of (2) and (3)).

    The equivalence of (4) and (6) follows from \cref{prop:weakorderbpd}.
\end{proof}

Consequently, we may give a statement for ASMs that mirrors Hamaker and Reiner's \cite[Lemma 6.2]{HR20} characterization of monotone triangles that are maximal in weak order:

\begin{corollary}
\label{prop:maximal-iff-essential-cell-in-every-row}
    The ASM $A$ is maximal in weak order if and only if $A$ has an essential cell in each row except the last.
\end{corollary}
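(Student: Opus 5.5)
The plan is to reduce maximality in weak order to a statement about each row separately, using \cref{equivalent-descent-conditions}. First I pin down what ``maximal in weak order'' means. Weak order is the transitive closure of the covering relations $\pi_i(C)\prec C$ with $\pi_i(C)\neq C$. So $A$ fails to be maximal exactly when there are some $C\in\asm(n)$ and some $i\in[n-1]$ with $C\neq A$ and $\pi_i(C)=A$. Indeed, if $A\prec D$, then the chain of covers ending at $D$ begins at $A$, and its first step already exhibits such a $C$. Conversely, such a $C$ gives $A\prec C$ directly.

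Thus $A$ is maximal if and only if, for every $i\in[n-1]$, condition (5) of \cref{equivalent-descent-conditions} fails. By \cref{equivalent-descent-conditions}, condition (5) is equivalent to condition (3), namely that $A$ has no essential cell in row $i$. Hence (5) fails for row $i$ exactly when $A$ has an essential cell in row $i$. Taking this over all $i\in[n-1]$ gives the corollary. The last row must be excluded, because there is no operator $\pi_n$. In fact row $n$ never contains an essential cell, since the definition of $\ess(A)$ refers to row $n+1$, and every cell of row $n$ has $\rk_A(n,j)=j$, so it is not an inversion.

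The only nontrivial input is the implication (3)$\Rightarrow$(5), which the proof of \cref{equivalent-descent-conditions} imports from \cite[Lemma 6.2]{HR20} via \cref{weak_order_isomorphism}. This is where care is needed. If I had to justify it without that citation, I would construct a preimage $C$ directly. If $A$ has no essential cell in row $i$, then by (6) there are no liftable column crossings for $i$ in $\asmtobpd(A)$. I would then look for a ``droop'' within rows $i,i+1$, the inverse of the local replacements of \cref{lemma:bpdreplacements}, that produces a valid BPD $\mathcal C\neq\asmtobpd(A)$ with $\psi_i(\mathcal C)=\asmtobpd(A)$. On the monotone-triangle side, the analogous step is to raise an entry of row $i$ of $m_A$ while keeping interlacing. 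I expect this existence step to be the main obstacle, and I would cite \cite{HR20} for it rather than reprove it.
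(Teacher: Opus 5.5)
Your proposal is correct and matches the paper's proof, which simply invokes the equivalence of (3) and (5) in \cref{equivalent-descent-conditions}. You also spell out two points the paper leaves implicit: non-maximality in the transitive closure reduces to a single cover $\pi_i(C)=A$ with $C\neq A$, and row $n$ is excluded because it never contains an essential cell; like the paper, you rest the nontrivial direction (3)$\Rightarrow$(5) on \cite[Lemma 6.2]{HR20} via \cref{weak_order_isomorphism}.
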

\begin{proof}
    This is immediate from the equivalence of (3) and (5) in \cref{equivalent-descent-conditions}.
\end{proof}

\section{Understanding $\pi_i^{-1}(\pi_i(A))$}

Through the equivalence of \cref{equivalent-descent-conditions}, via \cite[Lemma 6.2]{HR20}, we know that $\pi_i^{-1}(\pi_i(A))$ consists of two or more matrices for each $i \in [n-1]$ and each $A \in \asm(n)$. We know from \cite[Lemma 2.9]{EKW-main} that this set is a sublattice of $\asm(n)$.  The purpose of this section is to give a method for constructing the complete set of BPDs corresponding to this sublattice.

We start by reinterpreting the covering relations of the ASM lattice in terms of local replacements on BPDs.  These moves are known as \emph{flips} on BPDs in \cite{APP}.

\begin{proposition}
\label{prop:moves-to-get-preimage}
    Given $A,B\in \asm(n)$, $A$ covers $B$ in strong order if and only if $\asmtobpd(A)$ can be obtained by applying one of the following local moves to $\asmtobpd(B)$:

\begin{tikzpicture}[x=1.25em,y=1.25em, baseline=(current bounding box.center)]
\draw[step=1,gray, thin] (0,0) grid (2,2);
\draw[color=black, thick](0,0)rectangle(2,2);
\draw[thick,rounded corners,color=blue] (.5,0)--(.5,1.5)--(1.5,1.5)--(1.5,2);
\end{tikzpicture}
$\mapsto$
\begin{tikzpicture}[x=1.25em,y=1.25em, baseline=(current bounding box.center)]
\draw[step=1,gray, thin] (0,0) grid (2,2);
\draw[color=black, thick](0,0)rectangle(2,2);
\draw[thick,rounded corners,color=blue] (.5,0)--(.5,.5)--(1.5,.5)--(1.5,1.5)--(1.5,2);
\end{tikzpicture} \hfill
\begin{tikzpicture}[x=1.25em,y=1.25em, baseline=(current bounding box.center)]
\draw[step=1,gray, thin] (0,0) grid (2,2);
\draw[color=black, thick](0,0)rectangle(2,2);
\draw[thick,rounded corners,color=blue] (.5,0)--(.5,1.5)--(1.5,1.5)--(2,1.5);
\end{tikzpicture}
$\mapsto$ 
\begin{tikzpicture}[x=1.25em,y=1.25em, baseline=(current bounding box.center)]
\draw[step=1,gray, thin] (0,0) grid (2,2);
\draw[color=black, thick](0,0)rectangle(2,2);
\draw[thick,rounded corners,color=blue] (.5,0)--(.5,.5)--(1.5,.5)--(1.5,1.5)--(2,1.5);
\end{tikzpicture} \hfill
\begin{tikzpicture}[x=1.25em,y=1.25em, baseline=(current bounding box.center)]
\draw[step=1,gray, thin] (0,0) grid (2,2);
\draw[color=black, thick](0,0)rectangle(2,2);
\draw[thick,rounded corners,color=blue] (0,.5)--(.5,.5)--(.5,1.5)--(1.5,1.5)--(1.5,2);
\end{tikzpicture}
$\mapsto$ 
\begin{tikzpicture}[x=1.25em,y=1.25em, baseline=(current bounding box.center)]
\draw[step=1,gray, thin] (0,0) grid (2,2);
\draw[color=black, thick](0,0)rectangle(2,2);
\draw[thick,rounded corners,color=blue] (0,.5)--(.5,.5)--(1.5,.5)--(1.5,1.5)--(1.5,2);
\end{tikzpicture} \hfill
\begin{tikzpicture}[x=1.25em,y=1.25em, baseline=(current bounding box.center)]
\draw[step=1,gray, thin] (0,0) grid (2,2);
\draw[color=black, thick](0,0)rectangle(2,2);
\draw[thick,rounded corners,color=blue] (0,.5)--(.5,.5)--(.5,1.5)--(1.5,1.5)--(2,1.5);
\end{tikzpicture}
 $\mapsto$ 
\begin{tikzpicture}[x=1.25em,y=1.25em, baseline=(current bounding box.center)]
\draw[step=1,gray, thin] (0,0) grid (2,2);
\draw[color=black, thick](0,0)rectangle(2,2);
\draw[thick,rounded corners,color=blue] (0,.5)--(.5,.5)--(1.5,.5)--(1.5,1.5)--(2,1.5);
\end{tikzpicture}

\smallskip

\begin{tikzpicture}[x=1.25em,y=1.25em, baseline=(current bounding box.center)]
\draw[step=1,gray, thin] (0,0) grid (2,2);
\draw[color=black, thick](0,0)rectangle(2,2);
\draw[thick,rounded corners,color=blue] (1.5,0)--(1.5,.5)--(2,.5);
\draw[thick,rounded corners,color=blue] (.5,0)--(.5,1.5)--(1.5,1.5)--(1.5,2);
\end{tikzpicture}
$\mapsto$
\begin{tikzpicture}[x=1.25em,y=1.25em, baseline=(current bounding box.center)]
\draw[step=1,gray, thin] (0,0) grid (2,2);
\draw[color=black, thick](0,0)rectangle(2,2);
\draw[thick,rounded corners,color=blue] (.5,0)--(.5,.5)--(2,.5);
\draw[thick,rounded corners,color=blue] (1.5,0)--(1.5,2);
\end{tikzpicture} \hfill
\begin{tikzpicture}[x=1.25em,y=1.25em, baseline=(current bounding box.center)]
\draw[step=1,gray, thin] (0,0) grid (2,2);
\draw[color=black, thick](0,0)rectangle(2,2);
\draw[thick,rounded corners,color=blue] (1.5,0)--(1.5,.5)--(2,.5);
\draw[thick,rounded corners,color=blue] (.5,0)--(.5,1.5)--(1.5,1.5)--(2,1.5);
\end{tikzpicture}
$\mapsto$
\begin{tikzpicture}[x=1.25em,y=1.25em, baseline=(current bounding box.center)]
\draw[step=1,gray, thin] (0,0) grid (2,2);
\draw[color=black, thick](0,0)rectangle(2,2);
\draw[thick,rounded corners,color=blue] (.5,0)--(.5,.5)--(2,.5);
\draw[thick,rounded corners,color=blue] (1.5,0)--(1.5,1.5)--(2,1.5);
\end{tikzpicture} \hfill
\begin{tikzpicture}[x=1.25em,y=1.25em, baseline=(current bounding box.center)]
\draw[step=1,gray, thin] (0,0) grid (2,2);
\draw[color=black, thick](0,0)rectangle(2,2);
\draw[thick,rounded corners,color=blue] (1.5,0)--(1.5,.5)--(2,.5);
\draw[thick,rounded corners,color=blue] (0,.5)--(.5,.5)--(.5,1.5)--(1.5,1.5)--(1.5,2);
\end{tikzpicture}
$\mapsto$
\begin{tikzpicture}[x=1.25em,y=1.25em, baseline=(current bounding box.center)]
\draw[step=1,gray, thin] (0,0) grid (2,2);
\draw[color=black, thick](0,0)rectangle(2,2);
\draw[thick,rounded corners,color=blue] (0,.5)--(.5,.5)--(2,.5);
\draw[thick,rounded corners,color=blue] (1.5,0)--(1.5,2);
\end{tikzpicture} \hfill
\begin{tikzpicture}[x=1.25em,y=1.25em, baseline=(current bounding box.center)]
\draw[step=1,gray, thin] (0,0) grid (2,2);
\draw[color=black, thick](0,0)rectangle(2,2);
\draw[thick,rounded corners,color=blue] (1.5,0)--(1.5,.5)--(2,.5);
\draw[thick,rounded corners,color=blue] (0,.5)--(.5,.5)--(.5,1.5)--(1.5,1.5)--(2,1.5);
\end{tikzpicture}
$\mapsto$
\begin{tikzpicture}[x=1.25em,y=1.25em, baseline=(current bounding box.center)]
\draw[step=1,gray, thin] (0,0) grid (2,2);
\draw[color=black, thick](0,0)rectangle(2,2);
\draw[thick,rounded corners,color=blue] (0,.5)--(.5,.5)--(2,.5);
\draw[thick,rounded corners,color=blue] (1.5,0)--(1.5,1.5)--(2,1.5);
\end{tikzpicture}

\smallskip

\begin{tikzpicture}[x=1.25em,y=1.25em, baseline=(current bounding box.center)]
\draw[step=1,gray, thin] (0,0) grid (2,2);
\draw[color=black, thick](0,0)rectangle(2,2);
\draw[thick,rounded corners,color=blue] (.5,0)--(.5,2);
\draw[thick,rounded corners,color=blue] (0,1.5)--(1.5,1.5)--(1.5,2);
\end{tikzpicture}
$\mapsto$
\begin{tikzpicture}[x=1.25em,y=1.25em, baseline=(current bounding box.center)]
\draw[step=1,gray, thin] (0,0) grid (2,2);
\draw[color=black, thick](0,0)rectangle(2,2);
\draw[thick,rounded corners,color=blue] (0,1.5)--(.5,1.5)--(.5,2);
\draw[thick,rounded corners,color=blue] (.5,0)--(.5,.5)--(1.5,.5)--(1.5,1.5)--(1.5,2);
\end{tikzpicture} \hfill
\begin{tikzpicture}[x=1.25em,y=1.25em, baseline=(current bounding box.center)]
\draw[step=1,gray, thin] (0,0) grid (2,2);
\draw[color=black, thick](0,0)rectangle(2,2);
\draw[thick,rounded corners,color=blue] (.5,0)--(.5,2);
\draw[thick,rounded corners,color=blue] (0,1.5)--(2,1.5);
\end{tikzpicture}
$\mapsto$
\begin{tikzpicture}[x=1.25em,y=1.25em, baseline=(current bounding box.center)]
\draw[step=1,gray, thin] (0,0) grid (2,2);
\draw[color=black, thick](0,0)rectangle(2,2);
\draw[thick,rounded corners,color=blue] (0,1.5)--(.5,1.5)--(.5,2);
\draw[thick,rounded corners,color=blue] (.5,0)--(.5,.5)--(1.5,.5)--(1.5,1.5)--(2,1.5);
\end{tikzpicture} \hfill
\begin{tikzpicture}[x=1.25em,y=1.25em, baseline=(current bounding box.center)]
\draw[step=1,gray, thin] (0,0) grid (2,2);
\draw[color=black, thick](0,0)rectangle(2,2);
\draw[thick,rounded corners,color=blue] (0,.5)--(.5,.5)--(.5,2);
\draw[thick,rounded corners,color=blue] (0,1.5)--(1.5,1.5)--(1.5,2);
\end{tikzpicture}
$\mapsto$
\begin{tikzpicture}[x=1.25em,y=1.25em, baseline=(current bounding box.center)]
\draw[step=1,gray, thin] (0,0) grid (2,2);
\draw[color=black, thick](0,0)rectangle(2,2);
\draw[thick,rounded corners,color=blue] (0,1.5)--(.5,1.5)--(.5,2);
\draw[thick,rounded corners,color=blue] (0,.5)--(.5,.5)--(1.5,.5)--(1.5,1.5)--(1.5,2);
\end{tikzpicture} \hfill
\begin{tikzpicture}[x=1.25em,y=1.25em, baseline=(current bounding box.center)]
\draw[step=1,gray, thin] (0,0) grid (2,2);
\draw[color=black, thick](0,0)rectangle(2,2);
\draw[thick,rounded corners,color=blue] (0,.5)--(.5,.5)--(.5,2);
\draw[thick,rounded corners,color=blue] (0,1.5)--(2,1.5);
\end{tikzpicture}
$\mapsto$
\begin{tikzpicture}[x=1.25em,y=1.25em, baseline=(current bounding box.center)]
\draw[step=1,gray, thin] (0,0) grid (2,2);
\draw[color=black, thick](0,0)rectangle(2,2);
\draw[thick,rounded corners,color=blue] (0,1.5)--(.5,1.5)--(.5,2);
\draw[thick,rounded corners,color=blue] (0,.5)--(.5,.5)--(1.5,.5)--(1.5,1.5)--(2,1.5);
\end{tikzpicture}

\smallskip

\begin{tikzpicture}[x=1.25em,y=1.25em, baseline=(current bounding box.center)]
\draw[step=1,gray, thin] (0,0) grid (2,2);
\draw[color=black, thick](0,0)rectangle(2,2);
\draw[thick,rounded corners,color=blue] (.5,0)--(.5,2);
\draw[thick,rounded corners,color=blue] (0,1.5)--(1.5,1.5)--(1.5,2);
\draw[thick,rounded corners,color=blue] (1.5,0)--(1.5,.5)--(2,.5);
\end{tikzpicture}
$\mapsto$
\begin{tikzpicture}[x=1.25em,y=1.25em, baseline=(current bounding box.center)]
\draw[step=1,gray, thin] (0,0) grid (2,2);
\draw[color=black, thick](0,0)rectangle(2,2);
\draw[thick,rounded corners,color=blue] (0,1.5)--(.5,1.5)--(.5,2);
\draw[thick,rounded corners,color=blue] (.5,0)--(.5,.5)--(2,.5);
\draw[thick,rounded corners,color=blue] (1.5,0)--(1.5,2);
\end{tikzpicture} \hfill
\begin{tikzpicture}[x=1.25em,y=1.25em, baseline=(current bounding box.center)]
\draw[step=1,gray, thin] (0,0) grid (2,2);
\draw[color=black, thick](0,0)rectangle(2,2);
\draw[thick,rounded corners,color=blue] (.5,0)--(.5,2);
\draw[thick,rounded corners,color=blue] (0,1.5)--(2,1.5);
\draw[thick,rounded corners,color=blue] (1.5,0)--(1.5,.5)--(2,.5);
\end{tikzpicture}
$\mapsto$
\begin{tikzpicture}[x=1.25em,y=1.25em, baseline=(current bounding box.center)]
\draw[step=1,gray, thin] (0,0) grid (2,2);
\draw[color=black, thick](0,0)rectangle(2,2);
\draw[thick,rounded corners,color=blue] (0,1.5)--(.5,1.5)--(.5,2);
\draw[thick,rounded corners,color=blue] (.5,0)--(.5,.5)--(2,.5);
\draw[thick,rounded corners,color=blue] (1.5,0)--(1.5,1.5)--(2,1.5);
\end{tikzpicture} \hfill
\begin{tikzpicture}[x=1.25em,y=1.25em, baseline=(current bounding box.center)]
\draw[step=1,gray, thin] (0,0) grid (2,2);
\draw[color=black, thick](0,0)rectangle(2,2);
\draw[thick,rounded corners,color=blue] (0,.5)--(.5,.5)--(.5,2);
\draw[thick,rounded corners,color=blue] (0,1.5)--(1.5,1.5)--(1.5,2);
\draw[thick,rounded corners,color=blue] (1.5,0)--(1.5,.5)--(2,.5);
\end{tikzpicture}
$\mapsto$
\begin{tikzpicture}[x=1.25em,y=1.25em, baseline=(current bounding box.center)]
\draw[step=1,gray, thin] (0,0) grid (2,2);
\draw[color=black, thick](0,0)rectangle(2,2);
\draw[thick,rounded corners,color=blue] (0,1.5)--(.5,1.5)--(.5,2);
\draw[thick,rounded corners,color=blue] (0,.5)--(.5,.5)--(2,.5);
\draw[thick,rounded corners,color=blue] (1.5,0)--(1.5,2);
\end{tikzpicture} 
\hfill
\begin{tikzpicture}[x=1.25em,y=1.25em, baseline=(current bounding box.center)]
\draw[step=1,gray, thin] (0,0) grid (2,2);
\draw[color=black, thick](0,0)rectangle(2,2);
\draw[thick,rounded corners,color=blue] (0,.5)--(.5,.5)--(.5,2);
\draw[thick,rounded corners,color=blue] (0,1.5)--(2,1.5);
\draw[thick,rounded corners,color=blue] (1.5,0)--(1.5,.5)--(2,.5);
\end{tikzpicture}
$\mapsto$
\begin{tikzpicture}[x=1.25em,y=1.25em, baseline=(current bounding box.center)]
\draw[step=1,gray, thin] (0,0) grid (2,2);
\draw[color=black, thick](0,0)rectangle(2,2);
\draw[thick,rounded corners,color=blue] (0,1.5)--(.5,1.5)--(.5,2);
\draw[thick,rounded corners,color=blue] (0,.5)--(.5,.5)--(2,.5);
\draw[thick,rounded corners,color=blue] (1.5,0)--(1.5,1.5)--(2,1.5);
\end{tikzpicture}
\end{proposition}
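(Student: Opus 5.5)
We reduce the statement to \cref{lem:cover-by-add-to-ess-cell} and \cref{lemma:new-ASM-from-increasing-at-essential-cell}. Those two results say that $A$ covers $B$ exactly when there is $(i,j)\in\ess(B)$ with $\rk_A=\rk_B+\delta_{(i,j)}$; here we apply \cref{lem:cover-by-add-to-ess-cell} with the roles of the letters swapped, so that the larger element is $A$ and the essential cell lives in the smaller element $B$. In that case $A$ is obtained from $B$ by adding $+1$ at $(i,j)$ and $(i+1,j+1)$ and adding $-1$ at $(i+1,j)$ and $(i,j+1)$. The whole proof is therefore a translation, via $\asmtobpd$, of this $2\times 2$ local modification of the matrix into a local modification of tiles in the $2\times2$ window $\{i,i+1\}\times\{j,j+1\}$.

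\textbf{Step 1: the essential cell as a tile configuration.} By \cref{lemma:bpdcornersumdictionary}, the rank values $\rk_B$ on the $3\times 3$ block of corner sums around the window are determined by the tiles in the window, and conversely. The essential condition
\[\rk_B(i,j)=\rk_B(i,j-1)=\rk_B(i-1,j)=\rk_B(i,j+1)-1=\rk_B(i+1,j)-1\]
forces $(i,j)$ to be a blank tile. It also means no pipe enters cell $(i,j)$ from the west or from the north.

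\textbf{Step 2: enumerate the windows.} Next we list the possible tiles at $(i,j+1)$, $(i+1,j)$, and $(i+1,j+1)$.
\begin{itemize}
\item $\rk_B(i,j+1)=\rk_B(i,j)+1$ means a vertical pipe leaves the bottom of column $j+1$ in row $i$. So $(i,j+1)$ is a vertical tile, a crossing tile, or an upward elbow $\lrcorner$ coming from the west. A pipe entering from the west is impossible, since $(i,j)$ is blank, so $(i,j+1)$ is either vertical or an $\ulcorner$ elbow.
\item Similarly, $(i+1,j)$ is either horizontal or an $\ulcorner$ elbow.
\item Given these, the tile at $(i+1,j+1)$ is constrained to a few options by pipe continuity.
\end{itemize}
I would organize the enumeration as: (whether the pipe at $(i,j+1)$ enters from below or is an $\ulcorner$ turning to exit north) $\times$ (whether the pipe at $(i+1,j)$ comes from the west or from below) $\times$ (which of the compatible tiles occupies $(i+1,j+1)$). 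This should produce exactly the sixteen left-hand pictures in the proposition, whose top-left cell is blank in every case.

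\textbf{Step 3: apply the matrix change and check the tiles.} For each configuration, we apply the entry change of \cref{lemma:new-ASM-from-increasing-at-essential-cell} and read off the new tiles with \cref{lemma:bpdcornersumdictionary}. For the BPD interpretation, the $+1$ at $(i,j)$ creates an $\ulcorner$ elbow there, and the $-1$ entries turn elbows into $\lrcorner$ elbows or into crossings. Rather than track entries, it is cleaner to recompute the four rank values $\rk_A$ on the window, with only $\rk(i,j)$ raised by one, and read off the tiles directly. Each computation should match the corresponding right-hand picture, which is the droop-style flip.

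Conversely, each local move in the list produces a blank tile at the top-left with the boundary data of an essential cell. It changes the rank function only at $(i,j)$, by $+1$, because the pipe edges crossing the window's boundary are unchanged. By \cref{lem:cover-by-add-to-ess-cell}, $A$ then covers $B$.

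\textbf{Main obstacle.} The main difficulty is the exhaustiveness of the case enumeration in Step 2: we must be certain that the sixteen listed windows are exactly those compatible with $(i,j)\in\ess(B)$, with no case missing or impossible. I would handle this by recording each window by its boundary pipe data, namely the entries on the west and south edges together with the exits on the north and east edges. The essential conditions fix some of these edges, and the remaining ones range over a product of choices whose size can be counted against the list.
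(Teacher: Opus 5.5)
Your strategy is the paper's: a tile-by-tile check using \cref{lem:cover-by-add-to-ess-cell}, \cref{lemma:new-ASM-from-increasing-at-essential-cell}, and \cref{lemma:bpdcornersumdictionary}. As you set it up, though, the enumeration cannot produce the listed moves, because of two concrete errors.

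First, the direction is reversed. \cref{lem:cover-by-add-to-ess-cell} applies verbatim with the letters of the proposition, so no swap is needed: $A$ covers $B$ if and only if there is $(i,j)\in\ess(A)$ with $\rk_B=\rk_A+\delta_{(i,j)}$. Your version, $(i,j)\in\ess(B)$ with $\rk_A=\rk_B+\delta_{(i,j)}$, gives $\rk_A\ge\rk_B$, i.e.\ $A<B$. That contradicts your own statement that $A$ is the larger element. The pictures confirm this. In every left-hand picture $\asmtobpd(B)$, the top-left cell $(i,j)$ holds the south--east elbow (a $1$) or a crossing, never a blank. Passing from left to right \emph{subtracts} $1$ at $(i,j)$ and $(i+1,j+1)$ and adds $1$ at $(i+1,j)$ and $(i,j+1)$. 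So the essential cell lives in the right-hand pictures $\asmtobpd(A)$, and with your signs Step 3 would produce the inverses of the listed moves.

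Second, Step 1 misreads the essential condition. The equalities $\rk(i,j)=\rk(i,j-1)=\rk(i-1,j)$ say that the column and row partial sums through $(i,j)$ vanish. By \cref{lemma:bpdcornersumdictionary}, this means no pipe crosses the \emph{south} or \emph{east} edge of $(i,j)$; it says nothing about the west and north edges. The tile at $(i,j)$ may therefore be blank or the north--west elbow (the $-1$ tile), consistent with $\inv(A)$ consisting of blank cells and $-1$ cells. Restricting to blank tiles leaves only $1\cdot 2\cdot 2\cdot 2=8$ windows, so your count of sixteen cannot come out. The eight you lose are the third and fourth rows of moves, whose right-hand pictures have the $-1$ elbow at $(i,j)$.

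To repair the proof, enumerate windows of $\asmtobpd(A)$ around $(i,j)\in\ess(A)$:
\begin{itemize}
\item $(i,j)$ is blank or the $-1$ elbow;
\item $(i,j+1)$ is vertical or the $1$ elbow;
\item $(i+1,j)$ is horizontal or the $1$ elbow;
\item $(i+1,j+1)$ is the $-1$ elbow or a crossing.
\end{itemize}
These $2^4=16$ windows are exactly the right-hand pictures, and \cref{lemma:new-ASM-from-increasing-at-essential-cell} turns each into the corresponding left-hand picture. Your converse argument (the window's boundary edges are unchanged, so only $\rk(i,j)$ changes) is sound once two corrections are made: the sign becomes $\rk_A(i,j)=\rk_B(i,j)-1$, and ``blank'' becomes ``blank or $-1$ elbow''.
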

\begin{proof}
    We can verify the statement tile by tile using \cref{lemma:bpdcornersumdictionary} and the characterization of covering relations on corner sum matrices, see e.g., \cite[Lemma 2.19]{EKW-main}.
\end{proof}

We now adapt the previous section's construction to produce the maximum element of $\pi_i^{-1}(\pi_i(A))$.  We describe this closely related construction below.

Fix $i\in[n-1]$ and a BPD $\mathcal B$. Call a pipe in row $i$ that crosses from column $j$ to column $j+1$ a \newword{droppable column crossing} for $i$ if there is no pipe in row $i+1$ that crosses from column $j$ to column $j+1$. Write $\dc_i(\mathcal B)$ for the set of column indices of such droppable column crossings for $i$ in $\mathcal B$.

We may express $\dc_i(\mathcal B)$ as a disjoint union of maximal size intervals, i.e.,
\[\dc_i(\mathcal B)=[a_1,b_1]\sqcup [a_2,b_2]\sqcup \cdots \sqcup [a_k,b_k]\]
with $a_h\leq b_h$ for all $h\in[k]$ and $b_h+1<a_{h+1}$ for all $h\in [k-1]$. Call the intervals $[a_h,b_h]$ \newword{droppable $i$-intervals} in $\mathcal B$.

For each $h\in[k]$, make the following local replacements of pipes within the rectangle $[i,i+1]\times[a_h,b_h+1]$ in $\mathcal B$:
\begin{enumerate}
    \item delete the horizontal pipe segment in row $i$ from the midpoint of column $a_h$ to the midpoint of column $b_h+1$,
    \item draw a horizontal pipe segment in row $i+1$ from the midpoint of column $a_h$ to the midpoint of column $b_h+1$,
    \item delete the vertical pipe segment in column $a_h$ from the midpoint of row $i$ to the midpoint of row $i+1$, and
    \item draw a vertical pipe segment in column $b_h+1$ from the midpoint of row $i$ to the midpoint of row $i+1$.
\end{enumerate}
After doing these replacements, we obtain a new diagram $\mathcal B'$ which we will show is a valid BPD. Call this map $\phi_i$. Because the $[a_h,b_h+1]$'s are disjoint, we may make all of these replacements simultaneously.

\begin{proposition}
\label{prop:droppablebpd}
    The map $\phi_i$ is well-defined. Fix $A \in \asm(n)$ and $\mathcal{B}=\asmtobpd(A)$. Then $\phi_i(\mathcal B)$ is the BPD associated to the  maximum element of $\pi_i^{-1}(\pi_i(A))$ in strong order.
\end{proposition}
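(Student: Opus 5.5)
Here is the plan. We mirror the proof of \cref{prop:weakorderbpd}, with the roles of rows $i$ and $i+1$ interchanged. Well-definedness comes first. We prove an analogue of \cref{lemma:bpdreplacements} for a droppable $i$-interval $[a,b]$ by a local case analysis in three places.
\begin{itemize}
\item In column $a$, a pipe exits column $a$ in row $i$ but not in row $i+1$. By maximality of the interval, we cannot have a pipe entering column $a$ in row $i$ with none entering in row $i+1$.
\item In the interior columns $a<j<b+1$, row $i$ carries a horizontal pipe and row $i+1$ does not.
\item In column $b+1$, a pipe enters in row $i$ and none enters in row $i+1$. We cannot have a pipe exiting in row $i$ with none exiting in row $i+1$.
\end{itemize}
These constraints should again give three, two and three local configurations. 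In each, one checks that deleting and redrawing the segments as prescribed yields legal tiles. These are exactly the inverses of the configurations in \cref{lemma:bpdreplacements}, read with the two rows exchanged. We then verify that every pipe still enters at the bottom and exits on the right, so $\phi_i(\mathcal B)\in\bpd(n)$.

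Next we identify the ASM. Let $C=\asmtobpd^{-1}(\phi_i(\mathcal B))$. The replacements touch only the vertical segments between rows $i$ and $i+1$, so by \cref{lemma:bpdtomonotone}(1) we get $m_C(r,\cdot)=m_A(r,\cdot)$ for all $r\neq i$. Equivalently, by \cref{eq:rank-from-triangle}, $\rk_C$ and $\rk_A$ agree off row $i$. Row $i$ changes in a specific way: each droppable interval $[a_h,b_h]$ replaces the entry $a_h$ of row $i$ of $m_A$ by $b_h+1$. Thus $C\in\pi_i^{-1}(\pi_i(A))$, since $\pi_i$ depends only on the ranks off row $i$.

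For maximality, take any $D$ whose ranks agree with those of $A$ off row $i$. By \cref{lemma:monotone_and_rank_induce_same_poset}, it suffices to show $m_D(i,\cdot)\le m_C(i,\cdot)$ entrywise. Fix rows $i-1$ and $i+1$. The interlacing conditions \cref{eq:monotone_conditions} bound row $i$ above by
\[m(i,k)\le \min\bigl(m(i-1,k),\,m(i+1,k+1)\bigr),\]
where the first term is absent when $k=i$. So the maximal admissible row $i$ is this pointwise minimum, which is a valid monotone triangle by \cref{jointmeetMT}-type reasoning. It remains to check that $m_C(i,\cdot)$ attains this bound. We plan to do this via \cref{lemma:bpdtomonotone}(2), translating horizontal crossings into $m_{C^T}$ and then to ranks.

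An alternative route is to show that $\phi_i(\mathcal B)$ has no droppable column crossing for $i$. Combined with a dual version of \cref{equivalent-descent-conditions}, this would say that no strong cover of $C$ preserves the ranks off row $i$. Since the fiber is a lattice by \cite[Lemma 2.9]{EKW-main}, hence has a maximum that can be reached by covers within the fiber, this would force $C$ to be that maximum. This route uses \cref{lem:cover-by-add-to-ess-cell}: covers $C\lessdot D$ inside the fiber correspond to cells $(i,j)$ where $\rk_C(i,j)$ can drop by $1$. Such cells are characterized through \cref{lemma:bpdcornersumdictionary}: $\rk(i,j)=\rk(i+1,j)$, meaning a pipe in row $i+1$ does not cross from column $j$ to column $j+1$, while $\rk(i,j)=\rk(i,j-1)+1=\rk(i-1,j)+1$. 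Translating, such a cell should produce a droppable crossing near column $j$ in row $i$.

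The main obstacle is this last step: showing that after $\phi_i$ no droppable crossings remain, that is, that maximal intervals cannot be recreated at their boundaries. We expect to handle it with the endpoint case analysis already done for well-definedness.
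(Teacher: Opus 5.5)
Your second route is essentially the paper's proof: well-definedness from the three/two/three local case analysis, then $\dc_i(\phi_i(\mathcal B))=\emptyset$ combined with the fact that every strong-order cover of $C$ inside the fiber needs a droppable crossing for $i$ (the fiber is convex, being cut out by rank equalities off row $i$, so a non-maximal $C$ always has such a cover). Your explicit check via \cref{lemma:bpdtomonotone}(1) that $\phi_i(\mathcal B)$ lies in the fiber is a step the paper leaves implicit.

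The step you flag as the main obstacle is immediate. $\phi_i$ changes the horizontal crossings $j\to j+1$ in rows $i$ and $i+1$ only for $j\in\dc_i(\mathcal B)$, moving each from row $i$ to row $i+1$, so no droppable crossing survives and none can be created at the interval boundaries. Your aside that the endpoint configurations are exactly the inverses of those in \cref{lemma:bpdreplacements} is not literally true (e.g.\ for the middle one at column $a$), but nothing depends on it.
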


We will prove \cref{prop:droppablebpd} later in this section.  First we start with a lemma.

\begin{lemma}
\label{lemma:bpdreplacements_droppable}
Let $\mathcal B$ be a BPD. Suppose $[a,b]$ is a droppable $i$-interval in $\mathcal B$.

\begin{enumerate}

\item The replacement $\mathcal B\mapsto \phi_i(\mathcal B)$ restricted to $[i,i+1]\times[a,a]$ must be one of the following:
\[
\begin{tikzpicture}[x=1.25em,y=1.25em]
	\draw[step=1,gray, thin] (0,0) grid (1,2);
	\draw[thick] (0,0)--(1,0)--(1,2)--(0,2)--cycle;
	\draw[thick,rounded corners,color=blue] (.5,0)--(.5,1.5)--(1,1.5);
\end{tikzpicture}
\quad
\raisebox{1em}{$\mapsto$}
\quad
\begin{tikzpicture}[x=1.25em,y=1.25em]
	\draw[step=1,gray, thin] (0,0) grid (1,2);
	\draw[thick] (0,0)--(1,0)--(1,2)--(0,2)--cycle;
	\draw[thick,rounded corners,color=blue] (.5,0)--(.5,.5)--(1,.5);
\end{tikzpicture}
\hspace{5em}
\begin{tikzpicture}[x=1.25em,y=1.25em]
	\draw[step=1,gray, thin] (0,0) grid (1,2);
	\draw[thick] (0,0)--(1,0)--(1,2)--(0,2)--cycle;
	\draw[thick,rounded corners,color=blue] (0,.5)--(.5,.5)--(.5,1.5)--(1,1.5);
\end{tikzpicture}
\quad
\raisebox{1em}{$\mapsto$}
\quad
\begin{tikzpicture}[x=1.25em,y=1.25em]
	\draw[step=1,gray, thin] (0,0) grid (1,2);
	\draw[thick] (0,0)--(1,0)--(1,2)--(0,2)--cycle;
	\draw[thick,rounded corners,color=blue] (0,.5)--(1,.5);
\end{tikzpicture}
\hspace{5em}
\begin{tikzpicture}[x=1.25em,y=1.25em]
	\draw[step=1,gray, thin] (0,0) grid (1,2);
	\draw[thick] (0,0)--(1,0)--(1,2)--(0,2)--cycle;
	\draw[thick,rounded corners,color=blue] (0,1.5)--(1,1.5);
	\draw[thick,rounded corners,color=blue] (0,.5)--(.5,.5)--(.5,2);
\end{tikzpicture}
\quad
\raisebox{1em}{$\mapsto$}
\quad
\begin{tikzpicture}[x=1.25em,y=1.25em]
	\draw[step=1,gray, thin] (0,0) grid (1,2);
	\draw[thick] (0,0)--(1,0)--(1,2)--(0,2)--cycle;
	\draw[thick,rounded corners,color=blue] (0,1.5)--(.5,1.5)--(.5,2);
	\draw[thick,rounded corners,color=blue] (0,.5)--(1,.5);
\end{tikzpicture}
\]

 \item If $a<j<b+1$, then the replacement $\mathcal B\mapsto \phi_i(\mathcal B)$ restricted to $[i,i+1]\times[j,j]$ must be one of the following configurations:
 \[
 \begin{tikzpicture}[x=1.25em,y=1.25em]
	\draw[step=1,gray, thin] (0,0) grid (1,2);
	\draw[color=black, thick](0,0)rectangle(1,2);
	\draw[thick,rounded corners,color=blue](0,1.5)--(1,1.5);
	\end{tikzpicture}
\quad
 \raisebox{1em}{$\mapsto$}
 \quad
 \begin{tikzpicture}[x=1.25em,y=1.25em]
	\draw[step=1,gray, thin] (0,0) grid (1,2);
	\draw[color=black, thick](0,0)rectangle(1,2);
	\draw[thick,rounded corners,color=blue](0,.5)--(1,.5);
	\end{tikzpicture}
 \hspace{5em}
 \begin{tikzpicture}[x=1.25em,y=1.25em]
	\draw[step=1,gray, thin] (0,0) grid (1,2);
	\draw[color=black, thick](0,0)rectangle(1,2);
	\draw[thick,rounded corners,color=blue](0,1.5)--(1,1.5);
\draw[thick,rounded corners,color=blue](.5,0)--(.5,2);
	\end{tikzpicture}
\quad
 \raisebox{1em}{$\mapsto$}
 \quad
\begin{tikzpicture}[x=1.25em,y=1.25em]
	\draw[step=1,gray, thin] (0,0) grid (1,2);
	\draw[color=black, thick](0,0)rectangle(1,2);
	\draw[thick,rounded corners,color=blue](0,.5)--(1,.5);
 \draw[thick,rounded corners,color=blue](.5,0)--(.5,2);
	\end{tikzpicture}
\]

\item The replacement $\mathcal B\mapsto\phi_i(\mathcal B)$ restricted to $[i,i+1]\times [b+1,b+1]$ must be one of the following configurations:
\[
\begin{tikzpicture}[x=1.25em,y=1.25em]
	\draw[step=1,gray, thin] (0,0) grid (1,2);
	\draw[thick] (0,0)--(1,0)--(1,2)--(0,2)--cycle;
	\draw[thick,rounded corners,color=blue] (0,1.5)--(1,1.5);
	\draw[thick,rounded corners,color=blue] (.5,0)--(.5,.5)--(1,.5);
\end{tikzpicture}
\quad
\raisebox{1em}{$\mapsto$}
\quad
\begin{tikzpicture}[x=1.25em,y=1.25em]
	\draw[step=1,gray, thin] (0,0) grid (1,2);
	\draw[thick] (0,0)--(1,0)--(1,2)--(0,2)--cycle;
	\draw[thick,rounded corners,color=blue] (.5,0)--(.5,1.5)--(1,1.5);
	\draw[thick,rounded corners,color=blue] (0,.5)--(1,.5);
\end{tikzpicture}
\hspace{5em}
\begin{tikzpicture}[x=1.25em,y=1.25em]
	\draw[step=1,gray, thin] (0,0) grid (1,2);
	\draw[thick] (0,0)--(1,0)--(1,2)--(0,2)--cycle;
	\draw[thick,rounded corners,color=blue] (0,1.5)--(.5,1.5)--(.5,2);
\end{tikzpicture}
\quad
\raisebox{1em}{$\mapsto$}
\quad
\begin{tikzpicture}[x=1.25em,y=1.25em]
	\draw[step=1,gray, thin] (0,0) grid (1,2);
	\draw[thick] (0,0)--(1,0)--(1,2)--(0,2)--cycle;
	\draw[thick,rounded corners,color=blue] (0,.5)--(.5,.5)--(.5,2);
\end{tikzpicture}
\hspace{5em}
\begin{tikzpicture}[x=1.25em,y=1.25em]
	\draw[step=1,gray, thin] (0,0) grid (1,2);
	\draw[thick] (0,0)--(1,0)--(1,2)--(0,2)--cycle;
	\draw[thick,rounded corners,color=blue] (0,1.5)--(.5,1.5)--(.5,2);
	\draw[thick,rounded corners,color=blue] (.5,0)--(.5,.5)--(1,.5);
\end{tikzpicture}
\quad
\raisebox{1em}{$\mapsto$}
\quad
\begin{tikzpicture}[x=1.25em,y=1.25em]
	\draw[step=1,gray, thin] (0,0) grid (1,2);
	\draw[thick] (0,0)--(1,0)--(1,2)--(0,2)--cycle;
	\draw[thick,rounded corners,color=blue] (0,.5)--(1,.5);
	\draw[thick,rounded corners,color=blue] (.5,0)--(.5,2);
\end{tikzpicture}
\]

\end{enumerate}
\end{lemma}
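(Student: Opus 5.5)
The proof mirrors that of \cref{lemma:bpdreplacements}, with the roles of rows $i$ and $i+1$ interchanged. Each part is a local case analysis on the column of tiles $[i,i+1]\times[j,j]$. We use four pieces of data in that column: whether a pipe enters column $j$ from the left in row $i$, whether one does so in row $i+1$, and whether a pipe exits column $j$ to the right in row $i$, and likewise in row $i+1$. Within a single column of a BPD, any vertical segment between the two rows is determined by these four horizontal data together with what crosses the top and bottom edges. Since each tile is one of the six allowed ones, listing the configurations compatible with the droppability constraints is a finite check.

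For (1), column $a$ is the start of a droppable $i$-interval. So in $\mathcal B$ a pipe exits column $a$ to the right in row $i$ and no pipe does so in row $i+1$. By maximality of the interval, column $a-1$ is not droppable. Hence we cannot have a pipe entering column $a$ in row $i$ while none enters in row $i+1$.

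The remaining possibilities for entering pipes are: none in either row, one only in row $i+1$, or one in both rows. Pipes only travel up and right, so the rest of each tile is forced. The three cases are:
\begin{itemize}
\item an NE-elbow in row $i$ fed from below;
\item a pipe entering row $i+1$ and turning up into row $i$ as an elbow;
\item a pipe passing straight through row $i$, together with the row $i+1$ pipe turning up and continuing through row $i$ as a crossing.
\end{itemize}
In the third case, row $i$ is a crossing tile whose vertical strand comes from the elbow below. One checks that this matches the third pictured configuration.

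In each case we then apply the four operations defining $\phi_i$ that touch column $a$. These delete the right half of the row $i$ horizontal segment, add a right half of a horizontal segment in row $i+1$, and delete the vertical segment in column $a$ between the midpoints of rows $i$ and $i+1$. Doing so yields exactly the pictured targets.

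For (2), with $a<j<b+1$, droppability of columns $j-1$ and $j$ means a pipe enters and exits column $j$ in row $i$ and none does in row $i+1$. So row $i+1$ is either empty or a vertical tile. Row $i$ is then either horizontal or a crossing, according to whether a vertical pipe passes. The operations move the horizontal segment down a row, giving the two pictured maps.

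Part (3) is symmetric to (1). Column $b$ is droppable, so a pipe enters column $b+1$ in row $i$ and none enters in row $i+1$. Column $b+1$ is not droppable, so we cannot have a pipe exiting in row $i$ while none exits in row $i+1$. The three exit patterns are: none in either row, exit only in row $i+1$, or exit in both rows. These yield the three pictured sources. The operations delete the left half of the row $i$ horizontal segment, add one in row $i+1$, and insert a vertical segment in column $b+1$ between the midpoints of rows $i$ and $i+1$. This produces the pictured targets; for example, an SW-elbow together with an NE-elbow below becomes a crossing.

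The main obstacle is bookkeeping in (1) and (3). One must confirm that every configuration consistent with the constraints really is built from the six legal tiles, with the vertical segment between rows $i$ and $i+1$ forced. One must also check that the results of the edits are legal tiles. In particular, no edit may leave a pipe end dangling at the interval boundaries, which is exactly what excluding the non-maximal cases guarantees.
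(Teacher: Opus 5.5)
Your proposal is correct and follows essentially the same route as the paper's proof. In both, you use that columns $a,\dots,b$ lie in $\dc_i(\mathcal B)$ while columns $a-1$ and $b+1$ do not (by maximality) to pin down which pipes enter and exit each column in rows $i$ and $i+1$, observe that this forces exactly the pictured tile pairs, and then check that the edits defining $\phi_i$ produce the pictured targets; your explicit check that the vertical segment between the two rows is forced in each case is accurate and simply spells out what the paper leaves implicit.
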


\begin{proof}
    \noindent (1) Because column $a$ is the start of a droppable $i$-interval, $\mathcal{B}$ must have a pipe exiting column $a$ in row $i$ and no pipe exiting column $a$ in row $i+1$. Furthermore, it must not be the case that there is a pipe entering column $a$ in row $i$ while there is no pipe entering column $a$ in row $i+1$. Otherwise, column $a-1$ would already be a droppable column crossing for $i$ in $\mathcal B$.
    
    The three pictured configurations are the only ones satisfying these requirements. The local replacement within $[i,i+1]\times [a,a]$ in all cases is as described in the definition of $\phi_i$.

        \noindent (2) In this case, for $a < j < b+1$, row $i$ of $\mathcal B$ must have a pipe that both enters and exits column $j$, while row $i+1$ must have no pipe entering or exiting column $j$. The two pictured cases  are the only configurations satisfying these requirements. The local replacement within $[i,i+1]\times [j,j]$ in both cases is as described in the definition of $\phi_i$.

    \noindent (3) Because column $b$ is the end of a droppable $i$-interval, $\mathcal{B}$ must have a pipe entering column $b+1$ in row $i$, but must not have a pipe entering column $b+1$ in row $i+1$. Furthermore, it must not be the case that there is a pipe leaving column $b+1$ in row $i$ yet no pipe leaving column $b+1$ in row $i+1$. Otherwise, column $b+1$ would also be a droppable column crossing for $i$, contradicting the maximality of the interval $[a,b]$.

    The three pictured configurations are the only ones satisfying these requirements. The local replacement within $[i,i+1]\times [b+1,b+1]$ in all cases is as described in the definition of $\phi_i$.
\end{proof}

\begin{example}
\label{example:droop}
We may visualize the replacements in the map $\phi_i$ as a droop move, as defined in \cite{LLS21}. Suppose we have a droppable $i$-interval, pictured below on the left.  We may change the cross in the top left corner into a bump, then droop the bump downwards so that it passes through the bottom right corner.  
\[
\begin{tikzpicture}[x=1.25em,y=1.25em]
    \draw[step=1,gray, thin] (0,0) grid (4,2);
    \draw[thick] (0,0) rectangle (4,2);
    \draw[thick,rounded corners,color=blue] (0,1.5) -- (3.5,1.5)--(3.5,2); 
    \draw[thick,rounded corners,color=blue] (0.5,2) -- (0.5,.5)--(0,.5);
    \draw[thick,rounded corners,color=blue] (2.5,2) -- (2.5,0);
\end{tikzpicture}
\quad \raisebox{1em}{$\mapsto$} \quad
\begin{tikzpicture}[x=1.25em,y=1.25em]
    \draw[step=1,gray, thin] (0,0) grid (4,2);
    \draw[thick] (0,0) rectangle (4,2);
    \draw[thick,rounded corners,color=blue] (0,1.5) -- (0.5,1.5) -- (0.5,2); 
    \draw[thick,rounded corners,color=blue] (0,.5)--(0.5,.5) -- (0.5,1.5) -- (3.5,1.5)--(3.5,2);
    \draw[thick,rounded corners,color=blue] (2.5,2) -- (2.5,0);
\end{tikzpicture}
\quad \raisebox{1em}{$\mapsto$} \quad
\begin{tikzpicture}[x=1.25em,y=1.25em]
    \draw[step=1,gray, thin] (0,0) grid (4,2);
    \draw[thick] (0,0) rectangle (4,2);
    \draw[thick,rounded corners,color=blue] (0,1.5) -- (0.5,1.5) -- (0.5,2); 
    \draw[thick,rounded corners,color=blue] (0,0.5) -- (3.5,0.5) -- (3.5,2); 
    \draw[thick,rounded corners,color=blue] (2.5,2) -- (2.5,0);
\end{tikzpicture}
\]
This produces a valid BPD.
\end{example}

We now prove the analogous result to \cref{prop:weakorderbpd}.

\begin{proof}[{Proof of \cref{prop:droppablebpd}}]

Well-definedness follows from \cref{lemma:bpdreplacements_droppable}.
Additionally, we have that $\dc_i(\phi_i(\mathcal B))=\emptyset$ by \cref{lemma:bpdreplacements_droppable}.  In \cref{prop:moves-to-get-preimage}, each of the given moves involves a droppable column crossing for $i$.  Because $\dc_i(\phi_i(\mathcal B))=\emptyset$, there are no moves that one can apply in row $i$ of $\phi_i(\mathcal B)$ that would produce a new BPD $\mathcal B'$ with $\asmtobpd^{-1}(\phi_i(\mathcal B))<\asmtobpd^{-1}(\mathcal B')$.  So $\asmtobpd^{-1}(\phi_i(\mathcal B))$ must be the maximum element of $\pi_i^{-1}(\pi_i(A))$.
\end{proof}

\begin{example}
We continue with the interval from \cref{example:droop}.  Pictured below is the poset induced on the intermediate diagrams by applying the moves in \cref{prop:moves-to-get-preimage}.

\begin{center}
\begin{tikzpicture}[scale=1.2]
    \node (A) at (0,4.5) {
\begin{tikzpicture}[x=1.25em,y=1.25em]
    \draw[step=1,gray, thin] (0,0) grid (4,2);
    \draw[thick] (0,0) rectangle (4,2);
    \draw[thick,rounded corners,color=blue] (0,.5)--(3.5,.5)--(3.5,2); 
    \draw[thick,rounded corners,color=blue] (0.5,2) -- (0.5,1.5)--(0,1.5);
    \draw[thick,rounded corners,color=blue] (2.5,2) -- (2.5,0);
\end{tikzpicture}};
    
    \node (B) at (0,3) {\begin{tikzpicture}[x=1.25em,y=1.25em]
    \draw[step=1,gray, thin] (0,0) grid (4,2);
    \draw[thick] (0,0) rectangle (4,2);
     \draw[thick,rounded corners,color=blue] (0,.5)--(1.5,.5)--(1.5,1.5) -- (2.5,1.5)--(2.5,2); 
    \draw[thick,rounded corners,color=blue] (0.5,2) -- (0.5,1.5)--(0,1.5);
    \draw[thick,rounded corners,color=blue] (3.5,2)--(3.5,.5)--(2.5,.5) -- (2.5,0);
\end{tikzpicture}};
    
    \node (C) at (2,1.5) {\begin{tikzpicture}[x=1.25em,y=1.25em]
    \draw[step=1,gray, thin] (0,0) grid (4,2);
    \draw[thick] (0,0) rectangle (4,2);
    \draw[thick,rounded corners,color=blue] (0,1.5) -- (2.5,1.5)--(2.5,2); 
    \draw[thick,rounded corners,color=blue] (0.5,2) -- (0.5,.5)--(0,.5);
    \draw[thick,rounded corners,color=blue] (3.5,2)--(3.5,.5)--(2.5,.5) -- (2.5,0);
\end{tikzpicture}
};
    \node (D) at (-2,1.5) {\begin{tikzpicture}[x=1.25em,y=1.25em]
    \draw[step=1,gray, thin] (0,0) grid (4,2);
    \draw[thick] (0,0) rectangle (4,2);
    \draw[thick,rounded corners,color=blue] (0,.5)--(1.5,.5)--(1.5,1.5) -- (3.5,1.5)--(3.5,2); 
    \draw[thick,rounded corners,color=blue] (0.5,2) -- (0.5,1.5)--(0,1.5);
    \draw[thick,rounded corners,color=blue] (2.5,2) -- (2.5,0);
\end{tikzpicture}
};
    
    \node (E) at (0,0) {\begin{tikzpicture}[x=1.25em,y=1.25em]
    \draw[step=1,gray, thin] (0,0) grid (4,2);
    \draw[thick] (0,0) rectangle (4,2);
    \draw[thick,rounded corners,color=blue] (0,1.5) -- (3.5,1.5)--(3.5,2); 
    \draw[thick,rounded corners,color=blue] (0.5,2) -- (0.5,.5)--(0,.5);
    \draw[thick,rounded corners,color=blue] (2.5,2) -- (2.5,0);
\end{tikzpicture}};
    \draw (A) -- (B);    
    \draw (B) -- (C);
    \draw (B) -- (D);
    \draw (C) -- (E);
    \draw (D) -- (E);
\end{tikzpicture}
\end{center}
\end{example}

\section*{Acknowledgements}

This project started as part of the Virtual Workshop for Women in Commutative Algebra and Algebraic Geometry, sponsored by the Fields Institute and organized by Megumi Harada and Claudia Miller. The authors would like to thank Zach Hamaker and Vic Reiner for helpful discussions on weak order on ASMs. They thank Daoji Huang for the question giving rise to \cref{rmk:Daoji's Question}.

\bibliographystyle{amsalpha} 
\bibliography{asm.bib}

\providecommand{\bysame}{\leavevmode\hbox to3em{\hrulefill}\thinspace}
\providecommand{\MR}{\relax\ifhmode\unskip\space\fi MR }
% \MRhref is called by the amsart/book/proc definition of \MR.
\providecommand{\MRhref}[2]{%
  \href{http://www.ams.org/mathscinet-getitem?mr=#1}{#2}
}
\providecommand{\href}[2]{#2}
\begin{thebibliography}{EKW25}

\bibitem[APP26]{APP}
David Anderson, Greta Panova, and Leonid Petrov, \emph{Computation and sampling
  for {S}chubert specializations}, Preprint (2026), 33 pages, {\sf
  arXiv:2603.20104}.

\bibitem[BB05]{BB05}
Anders Bj\"{o}rner and Francesco Brenti, \emph{Combinatorics of {C}oxeter
  groups}, Graduate Texts in Mathematics, vol. 231, Springer, New York, 2005.

\bibitem[Beh08]{Beh08}
Roger~E. Behrend, \emph{Osculating paths and oscillating tableaux}, Electron.
  J. Combin. \textbf{15} (2008), no.~1, Research Paper 7, 60.

\bibitem[BSS26]{BSS}
Mathilde Bouvel, Rebecca Smith, and Jessica Striker, \emph{Key-avoidance for
  alternating sign matrices}, Discrete Math. Theor. Comput. Sci. \textbf{27}
  ([2025--2026]), no.~1, Paper No. 3, 28.

\bibitem[EKW25]{EKW-main}
Laura Escobar, Patricia Klein, and Anna Weigandt, \emph{Algebra and geometry of
  {ASM} weak order}, Preprint (2025), 33 pages, {\sf arXiv:2502.19266}.

\bibitem[For08]{For08}
Marc Fortin, \emph{The {M}ac{N}eille completion of the poset of partial
  injective functions}, Electron. J. Combin. \textbf{15} (2008), no.~1,
  Research paper 62, 30.

\bibitem[Ful92]{Ful92}
William Fulton, \emph{Flags, {S}chubert polynomials, degeneracy loci, and
  determinantal formulas}, Duke Math. J. \textbf{65} (1992), no.~3, 381--420.

\bibitem[HR20]{HR20}
Zachary Hamaker and Victor Reiner, \emph{Weak order and descents for monotone
  triangles}, European J. Combin. \textbf{86} (2020), 103083, 22.

\bibitem[KM05]{KM05}
Allen Knutson and Ezra Miller, \emph{Gr\"{o}bner geometry of {S}chubert
  polynomials}, Ann. of Math. (2) \textbf{161} (2005), no.~3, 1245--1318.

\bibitem[Knu09]{Knu09}
Allen Knutson, \emph{Frobenius splitting, point-counting, and degeneration},
  Preprint (2009), 28 pages, {\sf arXiv:0911.4941}.

\bibitem[KW23]{KW23}
Patricia Klein and Anna Weigandt, \emph{Bumpless pipe dreams encode {G}r\"obner
  geometry of {S}chubert polynomials}, Preprint (2023), 42 pages, {\sf
  arXiv:2108.08370}.

\bibitem[Las02]{Las-Ice}
Alain Lascoux, \emph{Chern and {Y}ang through ice}, Preprint (2002), 17 pages,
  tinyurl.com/y64bnpro.

\bibitem[LLS21]{LLS21}
Thomas Lam, Seung~Jin Lee, and Mark Shimozono, \emph{Back stable {S}chubert
  calculus}, Compos. Math. \textbf{157} (2021), no.~5, 883--962.

\bibitem[LS96]{LS96}
Alain Lascoux and Marcel-Paul Sch\"{u}tzenberger, \emph{Treillis et bases des
  groupes de {C}oxeter}, Electron. J. Combin. \textbf{3} (1996), no.~2,
  Research paper 27, approx. 35.

\bibitem[MRR83]{MRR83}
W.~H. Mills, David~P. Robbins, and Howard Rumsey, Jr., \emph{Alternating sign
  matrices and descending plane partitions}, J. Combin. Theory Ser. A
  \textbf{34} (1983), no.~3, 340--359.

\bibitem[Wei17]{Wei17}
Anna Weigandt, \emph{Prism tableaux for alternating sign matrix varieties},
  Preprint (2017), 33 pages, {\sf arXiv:1708.07236}.

\bibitem[Wei21]{Wei21}
\bysame, \emph{Bumpless pipe dreams and alternating sign matrices}, J. Combin.
  Theory Ser. A \textbf{182} (2021), Paper No. 105470, 52.

\end{thebibliography}
\end{document}